\renewcommand{\Sigma}{\Gamma}
\newcommand\sC{{\mathcal C}}
\newcommand\sB{{\mathcal B}}
\newcommand\Ga{\Sigma}
\newcommand\ga{\gamma}
\newcommand\de{\delta}
\def\Bbb{\bf}
\newcommand{\RR}{\ensuremath{\mathbb{R}}}
\newcommand{\ZZ}{\ensuremath{\mathbb{Z}}}
\newcommand{\sS}{\ensuremath{\mathcal{S}}}
\newcommand{\NN}{\ensuremath{\mathbb{N}}}
\newcommand{\HH}{\ensuremath{\mathbb{H}}}
\newcommand{\PP}{\ensuremath{\mathbb{P}}}
\newcommand{\ra}{\ensuremath{\rightarrow}}
\def\eea{\end{eqnarray*}}
\def\bea{\begin{eqnarray*}}
\newcommand\dual{\mathrel{\raise3pt\hbox{$\underline{\mathrm{\thinspace d
\thinspace}}$}}}
\newcommand\qe{\ifhmode\unskip\nobreak\fi\quad $\Box$}
\def\BOX{\hfill\lower.5\baselineskip\hbox{$\Box$}}
\newtheorem{theo}{Theorem}[section]
\newtheorem{remarkk}[theo]{Remark}
\newenvironment{rem}{\begin{remarkk}\rm}{\end{remarkk}}
\newtheorem{defin}[theo]{Definition}
\newenvironment{definition}{\begin{defin}\rm}{\end{defin}}
\newtheorem{prop}[theo] {Proposition}
\newtheorem{cor}[theo]{Corollary}
\newtheorem{lemma}[theo]{Lemma}
\newtheorem{example}[theo]{Example}
\newcommand{\sR}{\ensuremath{\mathcal{R}}}
\newcommand{\ul}{\underline}
\newcommand{\ol}{\overline}
\renewcommand{\a}{\alpha}
\renewcommand{\b}{\beta}
\newcommand{\cg}{\gamma}
\renewcommand{\d}{\delta}
\newcommand{\e}{\varepsilon}
\renewcommand{\l}{\lambda}
\newcommand{\x}{\xi}
\newcommand{\tto}{\longrightarrow}
\newcommand{\sT}{\ensuremath{\mathcal{T}}}
\newcommand{\Dn}{{D}_n}
\newcommand{\cutoff}[1]{}
\DeclareMathOperator{\Aut}{Aut}
\newcommand{\Proof}{{\it Proof. }}
\begin{document}

\title[Dihedral covers of algebraic curves]{The irreducible components of the moduli space of dihedral covers of algebraic curves}\author{Fabrizio Catanese, Michael L\"onne, Fabio Perroni}

\thanks{The present work took place in the realm of the DFG
Forschergruppe 790 `Classification of algebraic
surfaces and compact complex manifolds'.\\
AMS classification: 14H37, 14H30, 14H15, 57M12, 57M60, 20J99
}

\date{\today}

\maketitle

\begin{abstract}
The main purpose of this paper is to introduce a new 
invariant for the action of a finite group $G$ on a compact complex curve of genus $g$.
With the aid of this invariant we achieve the classification of the components of the locus 
(in the moduli space) of curves admitting an effective action  by the dihedral group $D_n$.
This invariant { has later been used in \cite{CLP13} where}  
 the results of Livingston \cite{Liv} and of  Dunfield and Thurston \cite{DT}
{  have been  extended to the ramified case}.
\end{abstract}


\section{Introduction}
We  study  moduli spaces of curves that admit an effective action 
by a given finite group $G$. These moduli spaces can be seen as closed algebraic subsets $M_g(G)$ of $M_g$,
the moduli space of smooth curves of genus $g>1$.
 We are mainly interested in understanding which are the irreducible components of 
$M_g(G)$. 

To a curve $C$ of genus $g$ with an action by $G$, we can associate several discrete 
invariants that are constant under deformation.

On one hand, the \emph{topological type}
of the $G$-action is a homomorphism 
$\rho \colon G \to Map_g$
well-defined up to inner conjugation induced by
different choices of an isomorphism  $Map (C)  \cong  Map_g$,
see Section 2.

It turns out that the locus $M_{g,\rho}(G)$ of curves admitting a $G$-action of topological type
$\rho$ is a closed irreducible subset of $M_g$, 
(see Theorem \ref{topological action}).

On the other hand
the action of $G$ on $C$ gives rise to a morphism $p\colon C \to C/G=: C'$, a $G$-cover,
 and the geometry of $p$
encodes several numerical invariants that are constant on $M_{g,\rho}(G)$:
the genus $g'$ of $C'$, the number $d$ of branch points $y_1, \dots , y_d \in C'$ and  the
orders $m_1 \leq  \dots \leq  m_d$ of the local monodromies. These numbers $g', d, m_1 \leq  \dots \leq  m_d$  form the {\it primary numerical type}. 

A second numerical invariant is obtained from the monodromy
\linebreak
{$\mu \colon \pi_1(C'\setminus\{ y_1, \dots , y_d\})\to G$}  of the restriction of $p$
to $p^{-1}(C'\setminus \{ y_1, \dots , y_d \})$,
and is called the \emph{$\nu$-type} or Nielsen function.
It is a class function $\nu$ 
which, for each conjugacy class $\mathcal{C}$ in $G$, counts the
number of local monodromies which belong to $\mathcal{C}$.

Observe that  the  irreducible closed algebraic sets $M_{g,\rho}(G)$ 
depend only upon what we call the `unmarked topological type', which is defined
as the conjugacy class of the subgroup  $\rho (G)$  inside $Map_g$.

The following is immediate
by Riemann's existence theorem and the irreducibility of
the moduli space $M_{g',d}$ of $d$-pointed curves of genus
$g'$.
Given $g'$ and $d$, the unmarked topological types
whose primary numerical type is  of the form $g', d, m_1, \dots , m_d$  are in bijection with
the quotient of the set of the corresponding  monodromies $\mu$ modulo 
the actions by $Aut(G)$ and by $Map(g',d)$. \\ 
Here $Map(g',d)$
is the full mapping class group of genus $g'$ and $d$ unordered  points. Thus  a first step toward the general problem 
consists in finding  a fine  invariant that distinguishes these orbits. 

In this paper we introduce a new invariant $\hat{\e}$ for $G$-actions on smooth curves.
 
In the case where $G$ is the dihedral group $D_n$ of order $2n$,
we show that
$\hat{\e}$ distinguishes the different  unmarked topological types, 
and  therefore $\hat{\e}$ is a fine invariant  in the dihedral case. 

Our invariant includes and extends two well known invariants that have been studied in the literature:
the $\nu$-type  (or Nielsen type) of the cover  (also called shape in \cite{FV}, cf. Def. \ref{Nielsen})
and
the class in the second homology group $H_2(G/H, \ZZ)$
(modulo the action of $Aut(G/H)$) corresponding to the unramified cover
$p'\colon C/H \to C'$, where $H$ is the 
minimal
normal subgroup of $G$ generated by the local monodromies.\\
These invariants,  which refine the primary numerical type, provide a fine  invariant under some restrictions, for instance when 
$G$ is abelian  or when $G$ acts freely and is the semi-direct product of two finite cyclic groups 
(as it follows by combining results from  \cite{FabIso}, \cite{cyclic},  \cite{Edm I} and  \cite{Edm II}). 
However, in general, they are not enough to distinguish unmarked topological types,
as one can see already for non-free $D_n$-actions (see Lemma \ref{bad case}). 

The construction of $\hat{\e}$ is similar in spirit to the procedure that,
using Hopf's  theorem, associates an element in $H_2(G,\ZZ)$ to any free $G$-action on a smooth curve.
So much cannot be achieved in the `branched' case of a non- free action. In this case we are only able to associate to
two given 
actions with the same $\nu$-type, an invariant in a quotient group of $H_2(G,\ZZ)$ which is the `difference' of the respective $\hat{\e}$- invariants. 
  Here is  the way we do it. 
 For any finite group $G$, let $F$ be the free group
generated by the elements of $G$ and let $R\trianglelefteq F$ be the subgroup of relations, that is $G=F/R$.
For any $\Sigma \subset G$, union of non trivial conjugacy classes, 
let $G_\Sigma$ be the quotient group of $F$
by the  minimal normal subgroup generated by $[F,R]$ and  by the elements
$\hat{a}\hat{b}\hat{c}^{-1}\hat{b}^{-1}\in F$, for any  $a, c \in \Sigma$, $b\in G$,
such that  $b^{-1} ab=c$. Here we denote by $\hat{g}\in F$ the generator corresponding to $g\in G$.
To a given $G$-cover $p\colon C \to C'$ we associate the set $\Sigma$ of { local monodromies, i.e., of} elements 
which  i) stabilize some point $x$ of $C$
 and ii) act on the tangent space at $x$ by a rotation  of angle
$\frac{2\pi}{m}$ where $m$ is the order of the stabilizer at $x$.
Upon the choice of a geometric basis for the fundamental group 
of the  complement $C'\setminus \{ y_1, \dots , y_d\}$  of the branch set,
 our cover is given by an element $v=(c_1, \dots , c_d; a_1, b_1, \dots , a_{g'}, b_{g'})\in G^{d+2g'}$
satisfying certain conditions (a \emph{Hurwitz generating system}), where the first entries correspond to the local monodromies.  
Thereby $\Sigma =\Sigma_v$ is  the union of the conjugacy classes of the $c_i$'s. 
The tautological lift $\hat{v}$ of $v$ is 
$(\widehat{c_1}, \dots , \widehat{c_d}; \widehat{a_1}, \widehat{b_1}, \dots , \widehat{a_{g'}}, \widehat{b_{g'}})$.
Finally, define $\e(v)$ as the class in $G_{\Sigma}$ of 
$$
\prod_1^d\widehat{c_j} \cdot \prod_1^{g'}[\widehat{a_i}, \widehat{b_i}] \, .
$$
It turns out that  the image of $\e(v)$ in $\left( G_{\Sigma}\right)/_{Inn(G)}$ is invariant under the action of $Map(g',d)$,
as shown in  Proposition \ref{e-inv}. 
Moreover the $\nu$-type of $v$ can be deduced from $\e(v)$, as it is essentially the image of $\e(v)$ in the abelianized group 
$G_{\Sigma}^{ab}$ (see the Remark after Def.\ \ref{Nielsen}).

In order to take into account also the automorphism group $Aut(G)$, we define 
$$
G^\cup : = {\coprod}_\Sigma G_\Sigma \, ,
$$
the disjoint union of all the $G_\Sigma$'s. Now, the group $Aut(G)$ acts on $G^\cup$ and we get a map
$$
\hat{\e}\colon \left(HS(G; g',d) /_{Aut(G)}\right)/_{Map(g',d)} \to (G^\cup)/_{Aut(G)} \, 
$$
which is induced by $v\mapsto \e(v)$. Here we denote by $HS(G; g',d)$ the set of all 
Hurwitz generating systems of length $d+2g'$.

Finally, we prove that
the map $\hat{\e}$ is injective  in the case $G=D_n$ and we determine the image of $\hat{\e}$,
(Theorem \ref{Dn-case}), thus the invariant 
$\hat{\e}$ is a fine invariant for $D_n$-actions. This completes the classification of the 
unmarked topological types for $G = D_n$, begun in \cite{CLP11}.

{ We finally show how this classification entails the classification of the irreducible components of the loci $M_g(D_n)$.  }

When $g'=0$ our $G_\Sigma$ is related to the group $\widehat{G}$ defined in \cite{FV} (Appendix),
where the authors give a proof of a theorem by Conway and Parker. Roughly speaking the theorem says that:
if the Schur multiplier $M(G)$ (which is isomorphic to $H_2(G,\ZZ$))
 is generated by commutators, then the $\nu$-type is a fine stable invariant, when $g'=0$.\\
 Results of this kind, when $g'>0$ but for free $G$-actions and any finite group $G$,
 have been proved in \cite{Liv} and \cite{DT}. This time the fine stable invariant lives in $H_2(G,\ZZ)/_{Aut(G)}$.\\
 The natural question   whether our $\hat{\e}$-invariant is a fine stable invariant for any finite group $G$
 and any effective $G$-action on compact curves
has been solved   in \cite{CLP13} for genus stabilisation.

\medskip

The structure of the paper is the following. In Section 2 we introduce the moduli spaces $M_g(G)$ 
and the subsets $M_{g,\rho}(G)$.
Using Riemann's existence theorem, we reduce the problem of the determination of 
the loci $M_{g,\rho}(G)$ to a combinatorial one. This leads to the concept of {topological type} 
and of Hurwitz generating system. 
In Section 3 we define the 
map $\hat{\e}$, the groups $H_{2,\Sigma}(G)$
and we prove some properties. The object of  Section 4 is the computation of $H_{2,\Sigma}(D_n)$.
These results are all used in Section 5 where we prove the injectivity of $\hat{\e}$ when $G=D_n$. 
In  Appendix { A} we collect some  results  about mapping class groups and their action 
on fundamental groups. We use these results in the proof of Theorem \ref{Dn-case}. 

 Appendix B describes the  case (see especially  theorem \ref{exception}) where two irreducible loci $M_{g, \rho}(D_n)$ 
coincide.

\section{Moduli spaces of $G$-covers}
Throughout this Section $g$ is an integer, $g>1$. The moduli space of curves of genus $g$
is denoted by $M_g$. For any finite group $G$, $M_g(G)$ is the locus of $[C]\in M_g$ such that
there exists an effective action of $G$ on $C$.  For any $[C]\in M_g(G)$, the quotient morphism
$p\colon C \to C/G=C'$ is a Galois cover with group $G$, a $G$-cover,  well defined up to isomorphisms.

Riemann's existence theorem allows us to use combinatorial methods to study  $G$-covers,
since $p$ determines and is determined by its restriction to $C'\setminus \mathcal{B}$,
where $\mathcal{B}=\{\, y_1\, , \, \dots \, , \, y_d\, \}\subset C'$ is the branch locus of $p$. \\
Fix a base point $y_0\in C' \setminus \mathcal{B}$ and a point $x_0\in p^{-1}(y_0)$.
Monodromy  gives a surjective group-homomorphism
\begin{equation}\label{m}
\mu \colon \pi_1 (\, C'\setminus \mathcal{B} \, , \, y_0 \, ) \longrightarrow G \, 
\end{equation}
that characterizes  $p$ up to isomorphism.

Let us  recall that  a {\bf geometric 
basis} of $\pi_1 (\, C'\setminus \mathcal{B} \, , \, y_0 \, )$
consists of simple non-intersecting 
 geometric
loops  based at $y_0$
$$
\gamma_1 , \dots , \gamma_d , \a_1, \b_1, \dots , \a_{g'}, \b_{g'}
$$
such that we get the presentation
\begin{equation*}
\pi_1 (\, C'\setminus \mathcal{B}\, , \, y_0 \, ) = \langle \, \gamma_1 , \, \dots \, ,  \gamma_d\, ; \, \a_1 ,  \b_1 , \, \dots \, ,  \a_{g'} ,  \b_{g'} \, | \, 
\prod_1^d \gamma_j \cdot \prod_{1}^{g'}[ \a_i ,  \b_i ] =1\, \rangle\, .
\end{equation*}

Varying a covering in a flat family with 
connected base, there are some numerical 
invariants which remain
unchanged, the first ones being the respective 
genera $g, g'$ of the curves $C$, $C'$, which are 
related by the
Hurwitz formula:
\begin{equation}\label{Hurwitz}
2 (g-1) = | G | [ 2 (g'-1) + \sum_i ( 1 - 
\frac{1}{m_i})], \ \ m_i : = ord (\mu (\cg_i) ) \, . 
\end{equation}
Observe moreover that a different choice of the 
geometric basis changes the generators
$\cg_i$, but does not change their conjugacy classes (up to permutation),
hence another numerical invariant is provided by the number of elements
$\mu (\cg_i) $ which belong to a fixed conjugacy class in the group $G$.

We formalize these invariants through  the following definition.

\newcommand{\gd}{g'\negmedspace,\!d\,}

\begin{definition}\label{factorisations}
Let $G$ be a finite group, and $g',d \in \NN$. 
A \emph{ $(\gd)$-Hurwitz vector}
in $G$ is an element $v\in G^{d+2g'}$, the Cartesian product of $G$ $(d+2g')$-times. 
A  $(\gd)$-Hurwitz vector in $G$ will  also be denoted by
$$
v=(c_1, \dots , c_d \, ; \, a_1, b_1, \dots , a_{g'},b_{g'})\, .
$$
For any $i\in \{ 1, \dots , d+2g'\}$, the $i$-th component $v_i$ of $v$ is defined as usual.
The \emph{evaluation} of $v$ is the element 
$$
ev(v)=\prod_1^d c_j \cdot \prod_1^{g'}[a_i, b_i] \in G \, .
$$ 

A \emph{Hurwitz generating system of length $d+2g'$ in $G$} is a  $(\gd)$-Hurwitz vector $v$ in $G$
such that the following conditions hold:
\begin{itemize}
\item[(i)] $c_i\not=1$ for all $i$;
\item[(ii)] $G$ is generated by the components  $v_i$ of 
$v$;
\item[(iii)] 
$ev(v) = 1 $.
\end{itemize}
We denote by $HS(G;g',d)\subset G^{d+2g'}$ the set of all Hurwitz generating systems in $G$ of length $d+2g'$.
\end{definition}

\begin{definition}\label{admissible}
The condition $ev(v)=\prod_1^d c_j \cdot \prod_1^{g'}[a_i, b_i] = 1 $ immediately implies that 
the product $\prod_1^d c_j$ has trivial image in the abelianization $G^{ab}$ of $G$.
Observe that the image of $c_j$ inside $G^{ab}$ only depends on the conjugacy class $\sC_j$ of $c_j$.

Denote by $[\sC]$, for each conjugacy class $\sC$ in $G$, its image inside $G^{ab}$.

One defines the Nielsen class function  of $v$ as the function which, on each conjugacy class $\sC$ in $G$, takes
the value $\nu (v) (\sC): = | \{ j | c_j \in \sC \}|$.

 We shall say   that a class function $\nu$ is {\bf admissible} if it satisfies:
 $$ \sum_{\sC}   \nu (\sC)  [\sC] = 0 \in G^{ab}.$$
\end{definition}

Notice that, once we fix a base point $y_0\in C' \setminus \mathcal{B}$ and a geometric basis
of $\pi_1(C'\setminus \mathcal{B}, y_0)$, there is a one-to-one correspondence between
the set of Hurwitz generating systems of length $d+2g'$ in $G$ and the set of monodromies $\mu$ as in \eqref{m}.

\medskip 

\paragraph{\bf Topological type.}

We recall a result contained in \cite{FabIso}, see also \cite{cime}.

Define the orbifold fundamental group 
$\pi_1^{orb} (\, C'\setminus \mathcal{B}\, , \, y_0 \, ; 
m_1, \dots m_d ) $  as the quotient of $\pi_1(C'\setminus \mathcal{B}, y_0)$
by the minimal normal subgroup generated by the elements $(\gamma_i)^{m_i}$. 
If
$p\colon C \to C'$ is a $G$-covering as above, 
then  its restriction to $C'\setminus\mathcal{B}$ is  a regular
topological cover with short exact homotopy sequence
\[
  1 \ra \pi_1 (\, C\setminus p^{-1}\mathcal{B} \, , \, x_0 \,) \ra \pi_1
(\, C'\setminus \mathcal{B}\, , \, y_0 \, ) \ra G \ra 1.
\]
The corresponding  exact sequence in orbifold covering theory is 

  $$
  1 \ra \pi_1 (\, C \, , \, x_0 \,) \ra \pi_1^{orb}
(\, C'\setminus \mathcal{B}\, , \, y_0 \, ; m_1, \dots m_d) \ra G \ra 1
$$
which is completely determined by the monodromy.  In turn the exact sequence determines,
via conjugation, a homomorphism 
 
$$
\rho \colon G \ra Out^{+} (\pi_1 (\, C \, , \, x_0 
\,)) = Map (C) : = Diff^+ (C) / Diff^0(C)
  $$
 which is fully equivalent to the topological action of $G$ on $C$.
  
  Here the superscript $^+$ denotes orientation-preserving, and the superscript $^0$ denotes `isotopic to the identity'.
The image is contained in the index two subgroup of
outer automorphisms corresponding to orientation preserving
mapping classes under the Dehn-Nielsen-Baer theorem,
cf.\ \cite[thm.\ 8.1]{fm}.

By  Lemma 4.12 of \cite{FabIso}, all the curves $C$ of a fixed 
genus $g$
which admit a given topological action $\rho$ of 
the group $G$ are parametrized by a connected complex manifold; 
arguing as in Theorem 2.4 of \cite{cyclic} we get

\begin{theo}\label{topological action}

The triples $(C,G, \rho)$ where $C$ is a
complex projective curve of genus $g \geq
2$, and $G$ is a finite  group acting effectively on $C$ with a topological action of
type $\rho$
  are  parametrized by a
connected complex manifold  $\sT_{g;G,\rho}$ of
dimension $3 (g'-1) + d $,  where $g'$ is the genus of $C' = C / G$,
and  $d$ is the cardinality of the branch locus $\sB$.

The image  $M_{g,\rho}(G)$ of
  $\sT_{g;G,\rho}$ inside the
moduli space
    $M_g$ is an irreducible closed 
subset of the same dimension  $3 (g'-1) + d $.

\end{theo}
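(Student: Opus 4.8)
The plan is to prove the theorem in two stages, corresponding to its two assertions, by reducing everything to the Teichm\"uller-theoretic picture available for pointed curves. The statement is essentially a transport of the irreducibility and dimension count for the moduli space $M_{g',d}$ of $d$-pointed genus-$g'$ curves up to the cover $C$, so the first thing I would do is make precise the correspondence, given by Riemann's existence theorem, between triples $(C,G,\rho)$ and the data of a pointed quotient curve $(C',\mathcal B)$ together with a surjection $\mu\colon\pi_1(C'\setminus\mathcal B,y_0)\to G$ of fixed topological type. The key structural input, cited in the excerpt as Lemma 4.12 of \cite{FabIso}, is that fixing $\rho$ fixes the conjugacy class of the subgroup $\rho(G)\subset Map_g$, and hence fixes the combinatorial monodromy datum up to the finite ambiguity of changing the marking.

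\textbf{Construction of the parameter manifold.} First I would build $\sT_{g;G,\rho}$ by descent from Teichm\"uller space. Let $\mathcal T_{g'}$ be the Teichm\"uller space of genus-$g'$ curves; over it sits the universal curve, and one can form the configuration space of $d$ ordered distinct points, giving a connected complex manifold $\mathcal T_{g',d}$ of dimension $3(g'-1)+d$ parametrizing marked $d$-pointed genus-$g'$ curves with a marking of $\pi_1(C'\setminus\mathcal B)$. Each point of $\mathcal T_{g',d}$, together with the fixed monodromy $\mu$ (determined by $\rho$ via the orbifold exact sequence displayed just before the theorem), determines by Riemann's existence theorem a unique $G$-cover $C\to C'$, hence a triple $(C,G,\rho)$. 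This assignment is holomorphic and, crucially, the orbifold exact sequence shows that the topological type of the resulting $G$-action is exactly $\rho$, independent of the point of $\mathcal T_{g',d}$; this is where I would lean on the orbifold covering theory recalled in the paragraph on topological type. One then defines $\sT_{g;G,\rho}$ as the image, or more carefully as the fiber of the natural forgetful map over the fixed $\rho$; its dimension is $3(g'-1)+d$ because the cover contributes no further moduli once $\mu$ is fixed, and its connectedness follows from the connectedness of $\mathcal T_{g',d}$.

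\textbf{Irreducibility and closedness of the image.} For the second assertion I would consider the composite $\sT_{g;G,\rho}\to\mathcal T_g\to M_g$, where the first arrow sends a triple to the point of $\mathcal T_g$ representing $C$ and the second is the usual quotient by the mapping class group $Map_g$. Since $\sT_{g;G,\rho}$ is a connected complex manifold and the maps are holomorphic, the image $M_{g,\rho}(G)$ is an irreducible analytic subset; it is then algebraic and closed because it is cut out, inside $M_g$, by the condition of admitting a $G$-action of the prescribed topological type, which is a closed condition (the automorphism group can only jump up under specialization, and the constraint that a fixed $\rho(G)$ sits in $Map_g$ is preserved under limits). That the dimension of the image equals that of the source, namely $3(g'-1)+d$, requires checking that the map to $M_g$ is generically finite onto its image: the fibers are finite because a generic curve with a given $G$-action has finite automorphism group, so no positive-dimensional family of distinct triples maps to a single curve.

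\textbf{Expected main obstacle.} The routine part is the dimension bookkeeping and the transfer of connectedness from $\mathcal T_{g',d}$; the delicate point is the claim that $M_{g,\rho}(G)$ is genuinely \emph{closed} in $M_g$ and that the dimension does not drop under the quotient by $Map_g$. Controlling the behavior of the $G$-action under degeneration, and verifying that the generic fiber of $\sT_{g;G,\rho}\to M_{g,\rho}(G)$ is finite (equivalently, that the marking data and the residual action of the mapping class group $Map(g',d)$ on monodromies produce only finite ambiguity), is where the argument needs the most care. Here I would invoke that $Map(g',d)$ acts on the finite set of Hurwitz generating systems of the given type, so the ambiguity is finite, and combine this with the properness of the relevant forgetful maps to conclude closedness without any dimension loss.
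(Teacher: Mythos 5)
Your proposal is correct and takes essentially the same route as the paper, which for this theorem only invokes Lemma 4.12 of \cite{FabIso} together with the argument of Theorem 2.4 of \cite{cyclic}: there the connected parameter manifold is realized as the fixed locus $\Fix(\rho(G))$ inside Teichm\"uller space, identified with the Teichm\"uller space of the quotient orbifold --- which is exactly your $\mathcal{T}_{g',d}$ plus Riemann's existence theorem --- and the closedness and preservation of dimension of the image in $M_g$ come from the properly discontinuous action of $Map_g$ with finite stabilizers, which is the precise form of the ``finite ambiguity'' and specialization arguments you sketch. No genuine gap.
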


Obviously, composing $\rho$ with an automorphism $\varphi \in Aut(G)$,
i.e. replacing $\rho$ with $\rho \circ \varphi$, does not change the 
 subgroup $\rho (G)\subset Map(C)$. In particular, $M_{g, \rho}(G)=M_{g, \rho\circ \varphi}(G)$,
and similarly $\sT_{g;G,\rho}=\sT_{g;G,\rho\circ \varphi}$.

Notice that $M_g(G)=\bigcup_\rho M_{g, \rho}(G)$, hence the components of $M_g(G)$
are in one-to-one correspondence with a subset of the different  unmarked topological types.
So, the next question which the above result 
motivates is: when do two  monodromies
$\mu_1 , \mu_2 \! : \!  \pi_1
( C' \setminus \sB  ,  y_0  ) \to  G$
 have the same unmarked topological type?

The answer is theoretically easy: the two covering spaces
have the same   unmarked topological type if and only if they are homeomorphic, hence if and only if
$\mu_1$ and $\mu_2$ differ by:
\begin{itemize}
\item
an automorphism of $G$;
\item
and a different choice of a geometric basis. This is 
realized by the action of a mapping class in
$$
Map(g',d):=\frac{Diff^+(C',\mathcal{B})}{Diff^0(C',\mathcal{B})} \, .
$$
\end{itemize}

To reformulate these conditions in terms of Hurwitz generating systems, notice that $Aut(G)$ acts on $HS(G;g',d)$
componentwise, and $Map(g',d)$ acts on $HS(G;g',d)/_{Aut(G)}$. The latter action is given by the 
group homomorphism $Map(g',d)\to Out\left( \pi_1(C'\setminus \mathcal{B},y_0) \right)$ 
and the identification between monodromies $\mu$ and Hurwitz generating systems.  
Theorem \ref{topological action} then implies  that there is  a bijection between  
  the set of unmarked topological   types $[\rho ]$  with $ g' $ and  $d $ fixed, 
and the following orbit space
$$
\{ [ \rho] \} \quad \longleftrightarrow \quad \left( HS(G;g',d)/_{ Aut(G)}\right)/_{Map(g',d)} \, .
$$

In the next sections,  we will also use the action of the unpermuted mapping class  group 
$$
Map^u(g',d+1):=Map^u(C',\mathcal{B}\cup \{ y_0\})
$$ 
on $HS(G;g',d)$, where $Map^u(g',d+1)$
consists of  diffeomorphisms  in $Diff^+ (C')$ which are the identity on $\mathcal{B}\cup \{ y_0\}$, modulo isotopy.
For any 
$v_1, v_2\in HS(G;g',d)$, we write $v_1 \sim v_2$ when  they are  in the same $Map^u(g',d+1)$-orbit.
While, $v_1 \approx v_2$ means that they represent the same class in 
$\left( HS(G;g',d)/_{ Aut(G)}\right)/_{Map(g',d)}$.
Clearly $v_1\sim v_2$ implies $v_1 \approx v_2$.

The mapping class group $Map(g',d)$ acts on $HS(G;g',d)$ only up to conjugation, but, since we are interested 
in classifying Hurwitz generating systems up to $Aut(G)$, we will also  use the notation $\varphi \cdot v$, meaning $\varphi \cdot [v]$,
with $\varphi \in Map(g',d)$ and $[v]\in HS(G;g',d)/_{Aut(G)}$.

\section{The tautological lift}
In this section we give the construction of our invariant in several steps. 
Having defined a suitable  group $G_\Sigma$, for any $\Sigma \subset G$  union of non-trivial 
conjugacy classes, we go on to a map $\e$, which associates to each  Hurwitz vector $v$ (with $c_i \in \Sigma$)
an element $\e (v)\in G_\Sigma$. Any automorphism $f\in Aut(G)$ induces  an isomorphism $f_\Sigma \colon G_\Sigma \to G_{f(\Sigma)}$,
hence $Aut(G)$ acts on the disjoint union $G^\cup=\coprod_\Sigma G_\Sigma$.
We show two key properties of $\e$:
\begin{itemize}
\item
it is $Aut(G)$-equivariant (Lemma \ref{aut-inv}), hence it descends to a map
$$
\tilde{\e}\colon HS(G;g',d)/_{Aut(G)} \to G^\cup/_{Aut(G)} \, ;
$$
\item
$\tilde{\e}$ is constant on the orbits of the mapping class group  $Map(g',d)$ (Proposition \ref{e-inv}).
\end{itemize}
Therefore $\e$ descends to our invariant $\hat{\e}$ which is formalized 
as the map
$$
\hat{\e} \colon  \left( HS(G;g',d)/_{Aut(G)}\right)/_{Map(g',d)} \to G^\cup/_{Aut(G)}\, , \quad  \forall g',d \,  ,
$$
induced by $\e$. We conclude the section with the study of general properties of the invariant 
that are relevant to this work.

Since our construction is inspired by  Hopf's description of the second homology group $H_2(G,\ZZ)$ \cite{Hopf},
we begin by recalling this. For a finite group $G$, fix a  presentation of $G$:
$$
1 \to R \to F \to G \to 1 \, ,
$$
 where $F$ is a free group. Then there is a group isomorphism (cf. \cite{Brown}): 
\begin{equation}\label{Hopf}
H_2(G,\ZZ)\cong \frac{R\cap [F,F]}{[F,R]} \, .
\end{equation}
If $v=(a_1,b_1, \dots , a_{g'},b_{g'})\in G^{2g'}$ satisfies $\prod_1^{g'}[a_i,b_i]=1$, then we can associate  a class in 
$H_2(G,\ZZ)$ in the following way: choose liftings $\widehat{a_i}, \widehat{b_i}\in F$ of $a_i, b_i$, then 
$\prod_1^{g'}[\widehat{a_i}, \widehat{b_i}]\in R\cap [F,F]$ and its class in $\frac{R\cap [F,F]}{[F,R]}$  gives an element of $H_2(G,\ZZ)$,
according to \eqref{Hopf}. Clearly, this element does not  depend on the various choices, moreover it  is invariant 
under the action of the mapping class group,  thus giving a topological invariant of  $v$.

The topological meaning of this invariant is the following. If the $G$-action on $C$ is free, the covering $p\colon C \to C'$
is \'etale, and hence it corresponds to a continuous function $Bp \colon C' \to BG$, up to homotopy. Here $BG$ is the classifying 
space of $G$. The topological invariant is simply the image $Bp_*([C'])\in H_2(BG,\ZZ)=H_2(G,\ZZ)$ of the fundamental class 
$[C']\in H_2(C',\ZZ)$ of $C'$ under the homomorphism $Bp_* \colon H_2(C',\ZZ) \to H_2(BG,\ZZ)$ induced by $Bp$.
Now, if we view $C'$ as an Eilenberg-Mac Lane space $K(\pi_1(C'),1)$, then the fundamental class $[C']$ is given by
$$
\prod_1^{g'}[\widehat{\alpha_i}, \widehat{\beta_i}] \in H_2(\pi_1(C'), \ZZ) \, ,
$$
where as usual $\alpha_1, \beta_1, \dots , \alpha_{g'}, \beta_{g'}$ is a geometric basis of  $ \pi_1(C')$ and $\widehat{\alpha_i}, \widehat{\beta_i}$
are liftings to the  free group of a presentation of  $ \pi_1(C')$. So, 
$$
Bp_*([C'])= \prod_1^{g'}[\widehat{a_i}, \widehat{b_i}] \in H_2(G,\ZZ)\, ,
$$
where $a_i =\mu (\alpha_i)$, $ b_i=\mu (\beta_i)$ and $\mu \colon \pi_1(C')\to G$ is the monodromy of $p\colon C\to C'$.

\medskip

>From now on, $F=\langle \hat{g}\, | \, g\in G\rangle$
is the free group generated by the  elements of $G$. 
Let $R\unlhd F$ be the
minimal normal subgroup generated by the
relations, that is $G=\frac{F}{R}$.

\begin{definition}\label{GGamma}
Let $G$ be a finite group and let $F$, $R$ be as above. 
For any union of non-trivial conjugacy classes $\Sigma \subset G$,  define
\begin{align*}
& R_\Sigma = \langle \langle [F,R], \hat{a}\hat{b}\hat{c}^{-1}\hat{b}^{-1}\, | \, \forall a \in \Sigma , ab=bc\rangle \rangle \, , \\
& G_\Sigma= \frac{F}{R_\Sigma} \, .
\end{align*}
The map $\hat{a}\mapsto a$, $\forall a \in G$, induces a group homomorphism $\a \colon G_\Sigma \to G$.
Set  $K_\Sigma=\ker(\a)$.
\end{definition}

\begin{lemma}\label{centr}
With the notation as before, the following holds. $R_\Sigma \subset R$ and $K_\Sigma=\frac{R}{R_\Sigma}$.
In particular $K_\Sigma$ is contained in the center of $G_\Sigma$ and the short exact sequence
$$
1 \tto \frac{R}{R_\Sigma} \tto G_\Sigma 
\stackrel{\alpha\,}{\tto} G \tto 1\, 
$$
is a central extension.
\end{lemma}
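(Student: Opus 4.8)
The plan is to reduce everything to two membership statements about the normal subgroups $R_\Sigma$ and $R$ of $F$, after which all three assertions follow from the isomorphism theorems. The lemma really contains three claims: the inclusion $R_\Sigma \subseteq R$, the identification $K_\Sigma = R/R_\Sigma$, and the centrality of $K_\Sigma$; I would treat them in that order, since each relies on the one before it.

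First I would prove $R_\Sigma \subseteq R$. As $R$ is normal in $F$, it suffices to check that every chosen generator of the normal closure $R_\Sigma$ already lies in $R$. For the commutators this is immediate: if $f\in F$ and $r\in R$, then $frf^{-1}\in R$ by normality, so $[f,r]=(frf^{-1})r^{-1}\in R$, giving $[F,R]\subseteq R$. For the remaining relators I would apply the canonical projection $\pi\colon F\to F/R=G$, under which $\pi(\hat g)=g$. The image of $\hat a\hat b\hat c^{-1}\hat b^{-1}$ is then $abc^{-1}b^{-1}$, and the defining hypothesis $ab=bc$ gives $abc^{-1}=b$, hence $abc^{-1}b^{-1}=1$ in $G$. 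Thus each generator of $R_\Sigma$ lies in $\ker\pi=R$, and since $R$ is normal the whole normal closure satisfies $R_\Sigma\subseteq R$. This is the only computational point, and it is precisely where the relation $ab=bc$ is used.

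Given $R_\Sigma\subseteq R$, the homomorphism $\alpha\colon G_\Sigma\to G$ is nothing but the canonical projection $F/R_\Sigma\twoheadrightarrow F/R$ induced by the identity of $F$. By the third isomorphism theorem its kernel is $R/R_\Sigma$, so $K_\Sigma=R/R_\Sigma$, which is the second claim.

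Finally, for centrality I would observe that $[F,R]\subseteq R_\Sigma$ holds by the very definition of $R_\Sigma$, which was built to contain $[F,R]$ among its generators. Consequently, for all $f\in F$ and $r\in R$ the commutator $[f,r]$ becomes trivial in $G_\Sigma=F/R_\Sigma$, i.e. the class of $r$ commutes with the class of $f$; since the classes of the $f\in F$ generate $G_\Sigma$ while the classes of the $r\in R$ generate $K_\Sigma=R/R_\Sigma$, this shows $K_\Sigma$ lies in the center of $G_\Sigma$. Combining the three claims yields the central extension
$$
1 \tto R/R_\Sigma \tto G_\Sigma \stackrel{\alpha\,}{\tto} G \tto 1 \, .
$$
I expect no genuine obstacle here: the only point of substance is the generator computation in the first step, and centrality is essentially engineered into the construction of $R_\Sigma$ by placing $[F,R]$ among its generators, so it comes for free once $K_\Sigma=R/R_\Sigma$ is known.
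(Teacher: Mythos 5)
Your proof is correct and follows essentially the same route as the paper's: it checks that the generators of $R_\Sigma$ lie in $R$ (using normality of $R$ for $[F,R]$ and the relation $ab=bc$ for the conjugation relators), identifies $K_\Sigma=R/R_\Sigma$ from the definition of $\alpha$, and derives centrality from $[F,R]\subseteq R_\Sigma$. The paper's version is merely more terse; you have supplied the same argument with the routine details written out.
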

\begin{proof}
 $[F,R]\subset R$ because $R$ is normal in $F$ and moreover   $ \hat{a}\hat{b}\hat{c}^{-1}\hat{b}^{-1}\in R$
for any $a,b,c\in G$ with $ab=bc$, therefore $R_\Sigma \subset R$. By the definition of $\a$ we have that
$K_\Sigma=\frac{R}{R_\Sigma}$. Finally, 
$K_\Sigma$ is in the center of $G_\Sigma$ because  $[F,R]\subset R_\Sigma$.
\end{proof}

The morphism $\a\colon G_\Sigma \to G$ has a \textit{tautological section} $G\to G_\Sigma$, $a\mapsto \hat{a}$.
This map is not a group homomorphism in general, but every element $\xi \in  G_\Sigma$ can be written as
$\hat{g} z = z \hat{g}$, with $g=\alpha(\xi)\in G$ and $z\in K_\Sigma$.
Here, by abuse of notation, $\hat{a}$ denotes also the class of $\hat{a}\in F$ in $G_\Sigma = F/{R_\Sigma}$.

\begin{lemma}
\label{conj-lift}
Let $\hat{a}, \xi \in G_\Sigma$. Suppose that $\hat{a}$  is conjugate to $\xi$ in $G_\Sigma$ and that 
$a\in\Sigma$. Then $\xi=\widehat{\alpha(\xi)}$.\end{lemma}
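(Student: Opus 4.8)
The plan is to translate the defining relators of $R_\Sigma$ into a statement about conjugation of tautological lifts, and then to reduce the given conjugation to one by a tautological lift. First observe that, by Definition \ref{GGamma}, for every $a\in\Sigma$ and every $b\in G$ the relator $\hat a\hat b\,\widehat{b^{-1}ab}^{\,-1}\hat b^{-1}$ lies in $R_\Sigma$ (taking $c=b^{-1}ab$, so that $ab=bc$); hence in $G_\Sigma$ one has the identity
$$
\hat b^{-1}\hat a\,\hat b=\widehat{b^{-1}ab}\,,\qquad a\in\Sigma,\ b\in G.
$$
In words: conjugating the tautological lift of an element of $\Sigma$ by the tautological lift of any group element again yields a tautological lift, namely that of the conjugate, which still belongs to $\Sigma$ because $\Sigma$ is a union of conjugacy classes. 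This is the only structural input I will need.

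Second, I would remove the part of the conjugating element lying in the centre. Write $\xi=\eta\,\hat a\,\eta^{-1}$ for some $\eta\in G_\Sigma$ and set $g:=\alpha(\eta)\in G$. By Lemma \ref{centr} the kernel $K_\Sigma$ is central in $G_\Sigma$, and by the remark following it we may write $\eta=\hat g\,z$ with $z\in K_\Sigma$; then
$$
\xi=\hat g\,z\,\hat a\,z^{-1}\hat g^{-1}=\hat g\,\hat a\,\hat g^{-1},
$$
since $z$ commutes with everything. Thus after this reduction the conjugator is a genuine tautological lift $\hat g$.

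Finally I would apply the conjugation identity, taking care that the tautological section is \emph{not} a homomorphism, so that $\hat g^{-1}\neq\widehat{g^{-1}}$ in general. Rather than rewrite $\hat g\,\hat a\,\hat g^{-1}$ directly, set $c:=gag^{-1}$; since $\Sigma$ is a union of conjugacy classes and $a\in\Sigma$, we have $c\in\Sigma$. Applying the displayed identity with the $\Sigma$-element $c$ and the group element $g$ gives $\hat g^{-1}\hat c\,\hat g=\widehat{g^{-1}cg}=\hat a$, whence $\hat c=\hat g\,\hat a\,\hat g^{-1}=\xi$. Since $\alpha(\xi)=gag^{-1}=c$, this reads $\xi=\hat c=\widehat{\alpha(\xi)}$, as desired. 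The only genuine subtlety, and the step I would flag as the main obstacle, is precisely this non-homomorphicity of the section: one must feed the relation the conjugate $c$ (using that $\Sigma$ is conjugation-invariant) instead of trying to split $\hat g^{-1}$, and one must first absorb the central factor $z$ so that the conjugator is a tautological lift to which the relation actually applies.
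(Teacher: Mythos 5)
Your proof is correct and follows essentially the same route as the paper's: absorb the central factor $z\in K_\Sigma$ so that the conjugator becomes a tautological lift, then invoke the defining relators of $R_\Sigma$ for elements of $\Sigma$. The only cosmetic difference is that the paper applies $\alpha$ to the equation $\hat a\hat b=\hat b\xi$ to identify $c=\alpha(\xi)$ and then uses the relation directly, whereas you apply the relation to the conjugate $c=gag^{-1}\in\Sigma$; both are valid instances of the same relation.
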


\proof
Let $\hat{b} z$ be a conjugating element, that is $ \hat{a} \hat{b} z   =  \hat{b} z \xi$. As $z\in K_\Sigma$, it
commutes with any element, hence
\begin{equation}\label{abz}
\hat{a} \hat{b}    =  \hat{b}  \xi \, .
\end{equation}
Now apply $\a$ and obtain: $ab=b\a(\x)$. By assumption $a\in \Sigma$, hence by definition of $G_\Sigma$
we have that $\hat{a}\hat{b}=\hat{b}\widehat{\a(\x)}$. Now using \eqref{abz} we deduce $\x=\widehat{\a(\x)}$.
\qed

\begin{definition}\label{ev}
Given a  $(\gd)$-Hurwitz vector 
$$
v=(c_1, \dots , c_d; a_1, b_1 , \dots , a_{g'}, b_{g'})
$$
 in $G$, 
 cf.\ Definition \ref{factorisations}, 
its \emph{tautological lift} $\hat{v}$ 
is the  $(\gd)$-Hurwitz vector in $G_\Sigma$ defined by
 $$
 \hat{v}=(\widehat{c_1}, \dots , \widehat{c_d}; \widehat{a_1}, \widehat{b_1}, \dots ,  \widehat{a_{g'}}, \widehat{b_{g'}})
 $$ 
 where the factors are the tautological lifts of the factors of $v$. 
 
Given a  $(\gd)$-Hurwitz vector $v$ in $G$ with $c_i\not= 1$, 
$\forall i$, we denote by $\Sigma_v$ the union of all conjugacy classes of $G$
containing at least one $c_i$.

For any $v$ as before,  let
$$
\e(v)=\prod_1^d \widehat{c_j} \cdot \prod_1^{g'}[\widehat{a_i}\widehat{b_i}]\in G_{\Sigma_v} \, ,
$$ 
be
the evaluation of the tautological lift $\hat{v}$ of $v$ in $G_{\Sigma_v}$, 
in analogy to Definition \ref{factorisations}.
\end{definition}

\begin{lemma}\label{aut-inv}
Let $G$ be any finite group, and let $\Sigma \subset G$ be any union of non trivial 
conjugacy classes. Then we have:
\begin{itemize}
\item[i)]
any $f\in Aut(G)$ induces an isomorphism
$f_\Sigma \colon G_\Sigma \to G_{f(\Sigma)}$;
\item[ii)]
$\e(f(v))=f_\Sigma (\e(v))$, $\forall f\in Aut(G)$ and $\forall v$ a $(\gd)$-Hurwitz vector with $c_i\not= 1$, $\forall i$, where $\Sigma =\Sigma_v$.
\end{itemize}
\end{lemma}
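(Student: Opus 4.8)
The plan is to produce the isomorphism in (i) by lifting $f$ to the free group $F$ and checking that this lift carries the relevant normal subgroups to one another; then (ii) becomes an immediate consequence of the fact that the induced map is a group homomorphism compatible with the tautological sections. First I would observe that, since the $\hat{g}$ ($g\in G$) freely generate $F$ and $f$ is a bijection of $G$, the assignment $\hat{g}\mapsto \widehat{f(g)}$ extends uniquely to an automorphism $\tilde{f}$ of $F$, whose inverse is induced by $f^{-1}$.

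Next I would show $\tilde{f}(R)=R$. Writing $\pi\colon F\to G$ for the quotient map $\hat{g}\mapsto g$, one checks on generators that $\pi\circ\tilde{f}=f\circ\pi$, hence this holds as an identity of homomorphisms. Therefore $\tilde{f}^{-1}(R)=\tilde{f}^{-1}(\ker\pi)=\ker(\pi\circ\tilde{f})=\ker(f\circ\pi)=\ker\pi=R$, using that $f$ is injective; so $\tilde{f}(R)=R$. Since $\tilde{f}$ is an automorphism fixing both $F$ and $R$ setwise, it follows that $\tilde{f}([F,R])=[F,R]$. I would then verify that $\tilde{f}$ sends the remaining generators of $R_\Sigma$ to generators of $R_{f(\Sigma)}$: if $a\in\Sigma$, $b\in G$ and $ab=bc$, then
$$
\tilde{f}\bigl(\hat{a}\hat{b}\hat{c}^{-1}\hat{b}^{-1}\bigr)=\widehat{f(a)}\,\widehat{f(b)}\,\widehat{f(c)}^{-1}\,\widehat{f(b)}^{-1},
$$
and since $f(a)\in f(\Sigma)$ and $f(a)f(b)=f(b)f(c)$, this is precisely a defining generator of $R_{f(\Sigma)}$. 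As $\tilde{f}$ is an automorphism of $F$, it carries the normal closure of a set to the normal closure of the image set; combined with $\tilde{f}([F,R])=[F,R]$ this yields $\tilde{f}(R_\Sigma)=R_{f(\Sigma)}$. Hence $\tilde{f}$ descends to an isomorphism $f_\Sigma\colon G_\Sigma=F/R_\Sigma\to F/R_{f(\Sigma)}=G_{f(\Sigma)}$, characterised by $f_\Sigma(\hat{g})=\widehat{f(g)}$.

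For (ii), note that $f(v)$ has components $f(c_i)$, $f(a_i)$, $f(b_i)$, and that $\Sigma_{f(v)}=f(\Sigma_v)=f(\Sigma)$ because $f$ permutes conjugacy classes; thus both $\e(f(v))$ and $f_\Sigma(\e(v))$ lie in $G_{f(\Sigma)}$. Since $f_\Sigma$ is a group homomorphism with $f_\Sigma(\hat{g})=\widehat{f(g)}$, applying it to $\e(v)=\prod_1^d\widehat{c_j}\cdot\prod_1^{g'}[\widehat{a_i},\widehat{b_i}]$ commutes with the products and commutators and replaces each lift $\hat{g}$ by $\widehat{f(g)}$, giving exactly $\prod_1^d\widehat{f(c_j)}\cdot\prod_1^{g'}[\widehat{f(a_i)},\widehat{f(b_i)}]=\e(f(v))$.

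Most of this is formal (that the assignment extends to an automorphism of $F$, and that homomorphisms preserve products and commutators). The one step demanding genuine care is matching the two generating sets, namely that $\tilde{f}$ carries the defining generators of $R_\Sigma$ to those of $R_{f(\Sigma)}$ and, crucially, that this correspondence survives passage to normal closures; this is where I expect the argument to require attention, and it is exactly the place where one uses that $f$ is an automorphism (so that it respects the relation $ab=bc$ and the passage to normal closure) rather than merely a set map.
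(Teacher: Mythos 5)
Your proposal is correct and follows essentially the same route as the paper: lift $f$ to the automorphism of $F$ sending $\hat{g}\mapsto\widehat{f(g)}$, check that it preserves $R$ and $[F,R]$ and carries the defining generators $\hat{a}\hat{b}\hat{c}^{-1}\hat{b}^{-1}$ (for $a\in\Sigma$) to those of $R_{f(\Sigma)}$, and then verify (ii) by the same direct computation on $\e(v)$. Your write-up is in fact somewhat more careful than the paper's (you establish $\tilde{f}(R)=R$ and the behaviour under normal closure explicitly, where the paper only records $\hat{f}(R)\subset R$), but the substance is identical.
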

\begin{proof}
i)  $f\in Aut(G)$ lifts to an automorphism $\hat{f}\in Aut(F)$ defined by
$$
\hat{f} : \hat{a} \:\mapsto\: \widehat{{f(a)}} \, .
$$
We have: $\hat{f} (R) \subset R$, and moreover
$$
\hat{f}(\hat{a}\hat{b}\hat{c}^{-1}\hat{b}^{-1}) \quad = \quad 
\widehat{f( a)} \widehat{f( b)} \widehat{f( c)}^{-1} \widehat{f( b)}^{-1} \, ,
$$
for any  $a,b,c\in G$. If $a\in \Sigma$, then $f(a)\in f(\Sigma)$ and hence 
$$
\widehat{f( a)} \widehat{f( b)} \widehat{f( c)}^{-1} \widehat{f( b)}^{-1}\in R_{f(\Sigma)} \, .\medskip
$$
\medskip\medskip
ii) \vspace{-\baselineskip}\vspace{-\baselineskip}
\begin{align*}
\e(f(v))&= \e(f(c_1), \dots , f(c_d); f(a_1) , \dots , f(b_{g'})) \\
&= \prod_1^d\widehat{f(c_i)}\cdot \prod_1^{g'}[\widehat{f(a_j)},\widehat{f(b_j)} ]\\
&= \prod_1^d\hat{f}(\widehat{c_i})\cdot \prod_1^{g'}[\hat{f}(\widehat{a_j}),\hat{f}(\widehat{b_j}) ]=f_\Sigma (\e(v))\, .
\end{align*}
\end{proof}

Now, we define 
$$
G^{\cup} :  = {\coprod}_\Sigma G_\Sigma \, , 
$$
and regard $\e$ as a map  $\e\colon HS(G;g',d)\to G^\cup$, $v\mapsto \e(v)\in G_{\Sigma_v}$.
Then the previous lemma means that $\e$ induces a map
$$
\tilde{\e}\colon HS(G;g',d)/_{Aut(G)} \to \left( G^\cup \right)/_{Aut(G)} \, .
$$
We have the following
\begin{prop}\label{e-inv}
For any $g', d\in \NN$,  $\tilde{\e}$ is  $Map(g',d)$-invariant, hence it induces a map
$$
\hat{\e}\colon \left( HS(G;g',d)/_{Aut(G)}\right)/_{Map(g',d)} \to \left( G^\cup \right)/_{Aut(G)} \,  .
$$
\end{prop}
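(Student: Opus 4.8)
The plan is to reduce the statement to a verification on a finite generating set of $Map(g',d)$. By Lemma \ref{aut-inv} the map $\tilde{\e}$ is already well defined on $HS(G;g',d)/_{Aut(G)}$, and the action of $Map(g',d)$ on this quotient is induced by automorphisms of $\pi_1(C'\setminus\mathcal{B},y_0)$ that are canonical only up to inner automorphisms. Since changing the representing automorphism by $\mathrm{inn}_w$ ($w\in\pi_1$) precomposes the monodromy and hence replaces a Hurwitz vector $v$ by the simultaneous conjugate $gvg^{-1}$ with $g=\mu(w)\in G$, i.e. by the action of $Inn(G)\subset Aut(G)$, this ambiguity is absorbed by the quotient $/_{Aut(G)}$ (again Lemma \ref{aut-inv}). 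Thus it suffices to fix, for each generator $\varphi$ of $Map(g',d)$ listed in Appendix~A, one representative automorphism of $\pi_1$ and to check $\e(\varphi\cdot v)=\e(v)$ in $G_{\Sigma_v}$. Note first that $\Sigma_{\varphi\cdot v}=\Sigma_v$, because every generator permutes the $c_i$ or conjugates them, without changing the set of their conjugacy classes, so both sides live in the same group $G_{\Sigma_v}$.

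Two structural facts drive every computation. First, by Lemma \ref{centr} the subgroup $K_\Sigma=\ker(\alpha)$ is central in $G_\Sigma$; hence for $x,y\in G$ the lift $\widehat{xy}$ and the product $\widehat{x}\,\widehat{y}$ differ by the central element $\widehat{xy}(\widehat{x}\,\widehat{y})^{-1}\in K_\Sigma$, and consequently any commutator $[\widehat{x},\widehat{y}]$ is unchanged when either entry is multiplied by an element of $K_\Sigma$. Second, the defining relations of $G_\Sigma$ (Definition \ref{GGamma}) give, for every $a\in\Sigma$ and every $b\in G$, the identity $\widehat{b^{-1}ab}=\widehat{b}^{-1}\widehat{a}\,\widehat{b}$; combined with centrality of $K_\Sigma$ this upgrades to $\widehat{gag^{-1}}=\widehat{g}\,\widehat{a}\,\widehat{g}^{-1}$ for all $a\in\Sigma$ and $g\in G$ (the same mechanism already used in Lemma \ref{conj-lift}). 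Finally, since $v\in HS(G;g',d)$ satisfies $ev(v)=1$, we have $\alpha(\e(v))=ev(v)=1$, so $\e(v)\in K_\Sigma$ is itself central.

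With these tools the generators split into three types. For a braid $\sigma_i$ interchanging two consecutive branch points, which replaces $(c_i,c_{i+1})$ by $(c_{i+1},\,c_{i+1}^{-1}c_ic_{i+1})$ and fixes the other entries, the $\Sigma$-relation applied to $a=c_i\in\Sigma$, $b=c_{i+1}$ gives $\widehat{c_{i+1}}\cdot\widehat{c_{i+1}^{-1}c_ic_{i+1}}=\widehat{c_{i+1}}\,\widehat{c_{i+1}}^{-1}\widehat{c_i}\,\widehat{c_{i+1}}=\widehat{c_i}\,\widehat{c_{i+1}}$, so the corresponding factor of $\e$ is literally unchanged. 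For a Dehn twist supported on a handle, which modifies a pair $(a_j,b_j)$ by right multiplication of one entry by a power of the other (for instance $b_j\mapsto b_ja_j$), the factor $[\widehat{a_j},\widehat{b_j}]$ is preserved: $\widehat{b_ja_j}$ differs from $\widehat{b_j}\,\widehat{a_j}$ by a central element that drops out of the commutator, and $[\widehat{a_j},\widehat{b_j}\,\widehat{a_j}]=[\widehat{a_j},\widehat{b_j}]$ because $\widehat{a_j}$ commutes with itself; twists along separating curves act by an overall conjugation, handled by the second structural fact. For the mixed generators carrying a branch-point loop $\gamma_i$ across a handle, the entry $c_i\in\Sigma$ gets conjugated by a handle element while the relevant $a_j,b_j$ are simultaneously altered; here one applies the $\Sigma$-relation to the $\Sigma$-element $c_i$ and discards all central discrepancies via centrality of $K_\Sigma$, after which the product telescopes back to $\e(v)$.

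The main obstacle is precisely this last, mixed, case: one must track several tautological lifts simultaneously and verify that every cocycle-type discrepancy $\widehat{xy}(\widehat{x}\,\widehat{y})^{-1}\in K_\Sigma$ and every auxiliary conjugation cancels, which is where the centrality of $K_\Sigma$ and the conjugation relations for $\Sigma$-elements must be used in tandem; the centrality of $\e(v)$ is what makes the residual global conjugations (the $Inn(G)$-ambiguity of the representing automorphism) act trivially, so the computation is consistent with the reduction of the first paragraph. Invariance under each generator yields invariance under all of $Map(g',d)$, and therefore $\tilde{\e}$ descends to the claimed map $\hat{\e}\colon\left(HS(G;g',d)/_{Aut(G)}\right)/_{Map(g',d)}\to\left(G^\cup\right)/_{Aut(G)}$.
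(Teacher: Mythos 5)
Your overall strategy --- verify invariance on a set of generators of $Map(g',d)$, using the centrality of $K_\Sigma$, the conjugation relation for elements of $\Sigma$, and the fact that commutators of lifts depend only on the images under $\a$ --- is viable, and you have correctly assembled exactly the three lemmas the paper also relies on (Lemmas \ref{centr}, \ref{conj-lift}, \ref{abel}, plus the braid computation of Lemma \ref{braid-invar}). But there is a genuine gap at the point you yourself identify as ``the main obstacle'': for the generators that push a branch point around a handle you assert that ``the product telescopes back to $\e(v)$'' without performing the computation. This is precisely the non-routine part of the argument. (It does in fact work: for the twist of Proposition \ref{xi-twists eqs}(i) with $u=1$ one checks
$\hat{w}\,\widehat{c_d}\,\hat{w}^{-1}\cdot[\widehat{c_d}\widehat{a_1},\widehat{b_1}]=\widehat{c_d}\,[\widehat{a_1},\widehat{b_1}]$ with $\hat{w}=\widehat{c_d}\widehat{a_1}\widehat{b_1}\widehat{a_1}^{-1}$, after replacing $\widehat{wc_dw^{-1}}$ by $\hat w\widehat{c_d}\hat w^{-1}$ via Lemma \ref{conj-lift} and $\widehat{c_da_1}$ by $\widehat{c_d}\widehat{a_1}$ via Lemma \ref{abel} --- but none of this is on the page.) A second, related gap: you appeal to ``the generators of $Map(g',d)$ listed in Appendix A'', but Appendix A does not list a generating set; it only computes two specific $\xi$-twists. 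Your three-way taxonomy (braids, handle twists acting by $b_j\mapsto b_ja_j$, mixed generators) is not shown to exhaust a generating set --- for instance, Dehn twists along curves crossing between handles, such as the one realizing the transformation \eqref{map2}, fit none of your three descriptions as stated, and your claim that separating twists act by ``an overall conjugation'' is not accurate.

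It is worth noting that the paper's proof avoids the generator-by-generator verification entirely, and you may want to compare. After reducing to pure mapping classes via Lemma \ref{braid-invar} and quotienting out $Inn(G)$ via Lemma \ref{aut-inv}, the paper observes that $\a$ induces an equivariant map $\alpha^{(d+2g')}\colon HS(G_\Sigma;g',d)\to HS(G;g',d)$ (mapping classes act by pre-composition on the free group, so they commute with any coefficient homomorphism), and that the evaluation $ev$ is preserved by \emph{any} mapping class acting on a Hurwitz vector in \emph{any} group, since mapping classes preserve the surface relation. Hence $\e(v)=ev(\hat v)=ev(\varphi\cdot\hat v)$ automatically, and the only thing left to check is $ev(\varphi\cdot\hat v)=ev(\widehat{\varphi\cdot v})$: for the first $d$ entries this follows from purity plus Lemma \ref{conj-lift}, and for the handle entries from Lemma \ref{abel}. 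This structural argument is what your computation in the mixed case would be re-deriving by hand, one generator at a time.
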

To prove this proposition we need some preliminary results. 
\begin{lemma}\label{braid-invar}
Let $\Sigma_v$ be associated to
a $(\gd)$-Hurwitz vector $v$ as in Definition \ref{ev}.
If the Hurwitz vector
$v'$ is related to $v$ by an elementary braid move,
then $\e(v)=\e(v')$.
\end{lemma}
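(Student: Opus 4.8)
The plan is to reduce the statement to a single application of the defining relations of $G_\Sigma$. An elementary braid move affects only two adjacent branch entries, replacing the pair $(c_i, c_{i+1})$ by $(c_i c_{i+1} c_i^{-1}, c_i)$ (or, in the opposite convention, by $(c_{i+1}, c_{i+1}^{-1} c_i c_{i+1})$), and leaves all the other components of $v$ untouched, in particular the handle entries $a_j, b_j$. Since the transformed entries are conjugate to $c_i$ and $c_{i+1}$, the collection of conjugacy classes of the $c$'s is preserved, so that $\Sigma_{v'} = \Sigma_v =: \Sigma$ and both $\e(v)$ and $\e(v')$ lie in the \emph{same} group $G_\Sigma$; this is precisely what makes the equality $\e(v) = \e(v')$ a meaningful statement, and I would record it as the first step.

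Next I would write out $\e(v) = \prod_1^d \widehat{c_j} \cdot \prod_1^{g'}[\widehat{a_i}, \widehat{b_i}]$ and observe that, apart from the block $\widehat{c_i}\,\widehat{c_{i+1}}$, every factor of this product coincides with the corresponding factor of $\e(v')$: the untouched $\widehat{c_j}$ are literally the same, and the genus part $\prod_1^{g'}[\widehat{a_i}, \widehat{b_i}]$ is not modified at all. Hence it suffices to prove the single identity $\widehat{c_i}\,\widehat{c_{i+1}} = \widehat{c_i c_{i+1} c_i^{-1}}\cdot \widehat{c_i}$ in $G_\Sigma$, after which the two global products agree factor by factor.

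To establish this identity I would invoke the defining relation of $G_\Sigma$ from Definition \ref{GGamma} with the choice $a = c_i c_{i+1} c_i^{-1}$, $b = c_i$, $c = c_{i+1}$. One checks immediately that $ab = c_i c_{i+1} = bc$, and that $a$, being conjugate to $c_{i+1} \in \Sigma$, belongs to $\Sigma$; therefore $\widehat{a}\,\widehat{b}\,\widehat{c}^{-1}\,\widehat{b}^{-1} \in R_\Sigma$, which is exactly the relation $\widehat{a}\,\widehat{b} = \widehat{b}\,\widehat{c}$ in $G_\Sigma$ and hence the desired identity. The opposite braid convention is handled in the same way, taking instead $a = c_i$, $b = c_{i+1}$, $c = c_{i+1}^{-1} c_i c_{i+1}$, where again $a \in \Sigma$ and $ab = bc$.

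The argument is essentially a one-line verification once the setup is in place, so I do not expect a serious obstacle here; the only points requiring genuine care are to confirm that the conjugated entry still lies in $\Sigma$ (so that the relation is actually available) and that the braid move leaves $\Sigma_v$ unchanged (so that $\e(v)$ and $\e(v')$ are being compared inside one and the same group $G_\Sigma$). Since no interaction with the commutator part $[\widehat{a_i}, \widehat{b_i}]$ occurs, the elementary braid moves are the most transparent of the mapping-class-group generators, and this lemma serves as the base case toward the full $Map(g',d)$-invariance asserted in Proposition \ref{e-inv}.
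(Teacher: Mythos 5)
Your proposal is correct and follows essentially the same route as the paper: reduce to the two affected adjacent entries and verify the single identity $\widehat{c_i}\,\widehat{c_{i+1}} = \widehat{c_{i+1}}\,\widehat{c_{i+1}^{-1}c_ic_{i+1}}$ (or its mirror) by invoking the defining relation $\widehat{a}\,\widehat{b}=\widehat{b}\,\widehat{c}$ of $G_\Sigma$ with the appropriate choice of $a\in\Sigma$, $b$, $c$ satisfying $ab=bc$. Your explicit remarks that $\Sigma_{v'}=\Sigma_v$ and that the conjugated entry remains in $\Sigma$ are exactly the points the paper's one-line computation relies on, so nothing is missing.
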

\begin{proof}
It suffices to consider the case $g=0$, $d=2$ and the
elementary braid
move $\sigma_1$ which maps 
$$
v = (c_1, c_2)\quad\text to\quad
v' = ( c_2, c_2^{-1}  c_1c_2 ) \, .
$$
In $G_{\Sigma_v}$ we have, thanks to $c_1\in \Sigma_v$, $c_1c_2= c_2(c_2^{-1} c_1 c_2)$,
and the relations of $G_{\Sigma_v}$:
\begin{align*}
\e(v)  = \widehat{c_1} \widehat{c_2}  =  \widehat{c_2} \widehat{c_2^{-1}c_1 c_2}  =  \e({v'}) \, .
\end{align*}
\end{proof}

\begin{lemma}
\label{abel}
If $\x,\eta \in G_\Sigma$, then
$$
\left[ \x, \eta \right] \quad = \quad \left[\widehat{\alpha(\x)}, \widehat{\alpha(\eta)}\right] \, .
$$
\end{lemma}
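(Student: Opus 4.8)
The plan is to exploit the fact, established in Lemma \ref{centr}, that the kernel $K_\Sigma = R/R_\Sigma$ of $\alpha \colon G_\Sigma \to G$ lies in the center of $G_\Sigma$. The statement is really the general principle that in a central extension the commutator of two elements depends only on their images in the quotient, applied here to the lifts furnished by the tautological section $a \mapsto \widehat{a}$.

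Concretely, I would begin by writing each of the two elements in the normal form recorded just after Lemma \ref{centr}: since $\widehat{\alpha(\xi)}$ is a lift of $\alpha(\xi)$, the element $\widehat{\alpha(\xi)}^{-1}\xi$ lies in $\ker(\alpha) = K_\Sigma$, so we may write $\xi = \widehat{\alpha(\xi)}\, z_1$ and likewise $\eta = \widehat{\alpha(\eta)}\, z_2$ with $z_1, z_2 \in K_\Sigma$.

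Then I would compute $[\xi,\eta] = \xi\eta\xi^{-1}\eta^{-1}$ directly, substituting these expressions. Because $z_1$ and $z_2$ are central they commute with every factor and may be freely transported to the end of the product, where they assemble into $z_1 z_2 z_1^{-1} z_2^{-1}$; since $K_\Sigma$ is abelian (being central), this factor collapses to the identity. What survives is exactly $\widehat{\alpha(\xi)}\,\widehat{\alpha(\eta)}\,\widehat{\alpha(\xi)}^{-1}\,\widehat{\alpha(\eta)}^{-1} = \left[\widehat{\alpha(\xi)}, \widehat{\alpha(\eta)}\right]$, which is the claimed equality.

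There is no genuine obstacle here: the only point requiring care is the bookkeeping of the central factors, and this is immediate once centrality of $K_\Sigma$ is invoked. The lemma will presumably serve, in the proof of Proposition \ref{e-inv}, to rewrite the commutator factors $[\widehat{a_i},\widehat{b_i}]$ appearing in $\e(v)$ so that they are insensitive to any modification of the chosen lifts by central elements; the content above is precisely that statement.
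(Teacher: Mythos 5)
Your proposal is correct and follows exactly the paper's own argument: the paper also writes $\x=\widehat{\alpha(\x)} z$ and $\eta=\widehat{\alpha(\eta)}z'$ with $z,z'\in K_\Sigma$ central by Lemma \ref{centr}, and then declares the conclusion immediate. You have merely spelled out the cancellation of the central factors that the paper leaves to the reader.
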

\begin{proof}
Write $\x=\widehat{\alpha(\x)} z$ and $\eta=\widehat{\alpha(\eta)}z'$
with $z,z'$ in $K_\Sigma$, hence central (Lemma \ref{centr}). 
Then the conclusion is immediate.

\end{proof}

\begin{proof} (Of Proposition \ref{e-inv}.) Let $\varphi \in Map(g',d)$.
Thanks to Lemma \ref{braid-invar} it suffices to consider
the case that $\varphi$ is a pure mapping class, i.e.
that $\varphi$ does not permute the conjugacy classes
associated to the local monodromies.
Using Lemma \ref{aut-inv} ii), we can further ignore the action of $G$ by conjugation
and pretend that $Map(g',d)$ acts on $HS(G;g',d)$.

Since $\varphi$ is a pure mapping class, 
the components of $v$, $\varphi\cdot v$ are conjugate to each
other, $v_i\sim (\varphi \cdot v)_i$  for $i=1,\ldots , d$, and where   $\sim$ denotes  conjugation equivalence. For the
components of $\hat{v}, \varphi\cdot\hat{v}$ the same is true,
$\hat{v}_i\sim (\varphi \cdot \widehat{v})_i$  for $i=1,\dots,d$.
By Lemma \ref{conj-lift}
we thus have:
$$
(\hat{v})_i  \sim  (\varphi \cdot \hat{v})_i \, \Rightarrow  (\varphi \cdot \hat{v})_i  = 
\widehat{\alpha(\!(\!\varphi\! \cdot\! \hat{v}\!)_{\!_{i}}\!\!\:) } \, .
$$

Now notice that the homomorphism $\alpha$ of Definition
\ref{GGamma} induces a map
\[
\alpha^{(d+2g')}: HS(G_\Sigma; g',d) \quad \tto \quad
HS(G; g',d),
\]
which is equivariant under the action of the mapping class group in the following sense:
consider the factorizations as a map from the free group on
$d+2g'$ generators to $G_\Sigma$, resp. $G$, and
the mapping class group as a group of automorphisms of this
free group.
Then $\alpha^{(d+2g')}$ is equivariant, since such automorphisms
act by pre-composition. Hence by the equivariance of $\alpha^{(d+2g')}$
$$
\alpha((\varphi \cdot \hat{v})_i) \quad = \quad (\varphi \cdot v)_i
$$
and therefore, for $i=1,\dots, d$,
$$
(\varphi \cdot \hat{v})_i  =
\widehat{\alpha(\!(\!\varphi\! \cdot\! \hat{v}\!)_{\!_{i}}\!\!\:)}
=  \widehat{(\varphi \cdot v)_i}  =  (\widehat{\varphi \cdot v})_i \, .
$$

By Lemma \ref{abel} we may change also the entries
$(\varphi \cdot \hat{v})_i$, $i>d$ in the commutators to
$\widehat{\alpha(\!(\!\varphi\! \cdot\! \hat{v}\!)_{\!_{i}}\!\!\:) }=(\widehat{\varphi \cdot v})_i$ without changing the
value of the commutators. Hence
$$
ev( \varphi \cdot \hat{v} ) \quad = \quad ev ( \widehat{\varphi \cdot v} ) \quad = \quad \e (\varphi \cdot v)\, .
$$

By the invariance of the evaluation under the mapping class
$$
\e(v) \quad = \quad ev(\hat{v}) \quad = \quad
ev(\varphi \cdot \hat{v}) 
 \quad = \quad \e(\varphi\cdot v)
$$
and we have proved our claim.
\end{proof}

\begin{definition}\label{Nielsen}
Let $v\in HS(G;g',d)$ and let $\nu(v) \in \oplus_{\mathcal{C}}\ZZ\langle \mathcal{C}\rangle$ ($\mathcal{C}$ runs over the set of conjugacy classes of $G$)
 be the vector whose $\mathcal{C}$-component
is the number of $v_j$, $j\leq d$, which belong to $\mathcal{C}$
 ($\nu(v)$ is also called the shape of $v$ in \cite{FV}).\\
The map
$$
\nu \colon HS(G;g',d) \to \oplus_{\mathcal{C}}\ZZ\langle \mathcal{C}\rangle 
$$
obtained in this way  induces a map 
$$
\tilde{\nu}\colon HS(G;g',d)/_{Aut(G)} \to \left( \oplus_{\mathcal{C}} \ZZ \langle  \mathcal{C} \rangle \right)/_{Aut(G)}
$$
which is $Map(g',d)$-invariant, therefore we get  a map 
$$
\hat{\nu}\colon \left( HS(G;g',d)/_{ Aut(G)}\right)/_{Map(g',d)} \to ( \oplus_{\mathcal{C}}\ZZ\langle \mathcal{C}\rangle)/_{Aut(G)} \, .
$$
For any $v\in HS(G;g',d)$, we call $\hat{\nu}(v)$ the $\nu$-type of $v$.
\end{definition}

\begin{rem}\label{evsnu}
Let $v\in HS(G;g',d)$  and let $\Sigma_v\subset G$ be the union of the conjugacy classes of $v_j$, $j\leq d$.
The abelianization $G_{\Sigma_v}^{ab}$ of $G_{\Sigma_v}$ can be described as follows:
$$
G_{\Sigma_v}^{ab}\cong \oplus_{\mathcal{C}\subset \Sigma_v}\ZZ\langle \mathcal{C} \rangle \oplus \oplus_{g\in G\setminus \Sigma_v}\ZZ \langle g \rangle \, ,
$$
where $\mathcal{C}$ denotes a conjugacy class of $G$.

Observe that $\nu(v) \in \oplus_{\mathcal{C}\subset \Sigma_v}\ZZ\langle \mathcal{C} \rangle \subset G_{\Sigma_v}^{ab}$
 coincides with the vector which is  the image in $G_{\Sigma_v}^{ab}$
of $\e(v)\in G_{\Sigma_v}$
under the natural homomorphism  $G_{\Sigma_v} \to G_{\Sigma_v}^{ab}$. \end{rem}

\begin{definition}\label{H2Gamma}
Let $\Sigma \subset G$ be a union of non-trivial conjugacy classes of $G$. We define
$$
H_{2,\Sigma}(G)=\ker \left( G_\Sigma \to G\times G_\Sigma^{ab} \right) \, ,
$$
where $G_\Sigma \to G\times G_\Sigma^{ab}$ is the morphism with first component $\a$ (defined in Definition \ref{GGamma})
and second component the natural morphism $G_\Sigma \to  G_\Sigma^{ab}$.
\end{definition}
Notice that 
$$
H_2(G,\ZZ)\cong \frac{R\cap [F,F]}{[F,R]} \cong \ker \left( \frac{F}{[F,R]} \to G\times G^{ab}_\emptyset \right) \, .
$$
In particular, when $\Sigma = \emptyset$, $H_{2,\Sigma}(G)\cong H_2(G, \ZZ)$.

The next result gives a precise relation between $H_2(G,\ZZ)$ and $H_{2,\Sigma}(G)$.
\begin{lemma}\label{H2toH2Gamma}
Let  $G$ be a finite group and let  $\Sigma \subset G$ be a union of nontrivial conjugacy classes.
Write $G=\frac{F}{R}$ and  $G_\Sigma =\frac{F}{R_\Sigma}$.
Then,  there is a short exact sequence
$$
1\to \frac{R_\Sigma \cap [F,F]}{[F,R]}\to H_2(G,\ZZ)\to H_{2,\Sigma}(G)\to 1 \, .
$$
In particular $H_{2,\Sigma}(G)$ is abelian.
\end{lemma}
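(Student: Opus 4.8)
The plan is to realize everything inside the single group $G_\emptyset = F/[F,R]$ and to exhibit the asserted sequence as the restriction to $H_2(G,\ZZ)$ of the tautological surjection $\pi\colon G_\emptyset \tto G_\Sigma$. By Lemma \ref{centr} we have $[F,R]\subseteq R_\Sigma\subseteq R$, so $\pi$ is a well-defined surjective homomorphism with kernel $R_\Sigma/[F,R]$. First I would record, from Definition \ref{H2Gamma} and the remark following it, the two descriptions $H_2(G,\ZZ)=\ker\!\bigl(G_\emptyset \to G\times G_\emptyset^{ab}\bigr)$ and $H_{2,\Sigma}(G)=\ker\!\bigl(G_\Sigma \to G\times G_\Sigma^{ab}\bigr)$, and observe the commutative square whose vertical arrows are $\pi$ on the left and $\Id_G\times\bar\pi$ on the right, with $\bar\pi\colon G_\emptyset^{ab}\to G_\Sigma^{ab}$ induced by $\pi$. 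Commutativity of the first components is the identity $\alpha_\Sigma\circ\pi=\alpha_\emptyset$ (all three maps being induced by $\Id_F$), and commutativity of the second is functoriality of abelianization. Hence $\pi$ carries $H_2(G,\ZZ)$ into $H_{2,\Sigma}(G)$, which is the middle map of the sequence.

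Next I would compute the kernel of this restriction. An element of $H_2(G,\ZZ)\subset G_\emptyset$ dies under $\pi$ precisely when it lies in $\ker\pi=R_\Sigma/[F,R]$. Reading $H_2(G,\ZZ)=\bigl(R\cap[F,F]\bigr)/[F,R]$ inside $G_\emptyset$ via Hopf's isomorphism \eqref{Hopf}, and using $R_\Sigma\subseteq R$, the kernel is
$$\frac{R\cap[F,F]}{[F,R]}\cap\frac{R_\Sigma}{[F,R]}=\frac{R_\Sigma\cap[F,F]}{[F,R]},$$
which is exactly the left-hand term of the sequence. This gives left-exactness.

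The one genuine point is surjectivity, and this is where I expect the only real work. Take $\zeta\in H_{2,\Sigma}(G)=K_\Sigma\cap[G_\Sigma,G_\Sigma]$. Since a surjective homomorphism maps commutator subgroup onto commutator subgroup, $\pi\bigl([G_\emptyset,G_\emptyset]\bigr)=[G_\Sigma,G_\Sigma]$, so I may choose $\omega\in[G_\emptyset,G_\emptyset]$ with $\pi(\omega)=\zeta$. The remaining task is to show such an $\omega$ may be taken in $\ker\alpha_\emptyset=R/[F,R]$: from $\alpha_\Sigma\circ\pi=\alpha_\emptyset$ and $\zeta\in K_\Sigma=\ker\alpha_\Sigma$ one gets $\alpha_\emptyset(\omega)=\alpha_\Sigma(\zeta)=1$, so in fact $\omega\in\ker\alpha_\emptyset$ automatically. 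Therefore $\omega\in(\ker\alpha_\emptyset)\cap[G_\emptyset,G_\emptyset]=H_2(G,\ZZ)$ and $\pi(\omega)=\zeta$, proving surjectivity. Finally, the exact sequence exhibits $H_{2,\Sigma}(G)$ as a quotient of the abelian group $H_2(G,\ZZ)$, hence it is abelian, settling the last assertion. The crux is thus the surjectivity step, whose resolution hinges precisely on the compatibility $\alpha_\Sigma\circ\pi=\alpha_\emptyset$ together with the inclusion $[F,R]\subseteq R_\Sigma$ from Lemma \ref{centr}.
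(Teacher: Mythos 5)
Your proof is correct and is essentially the paper's own argument: both identify the map as the one induced by the natural projection (you phrase it via $\pi\colon G_\emptyset\to G_\Sigma$, the paper via $r\mapsto rR_\Sigma$ on $R\cap[F,F]$), both prove surjectivity by lifting an element of $[G_\Sigma,G_\Sigma]$ to the commutator subgroup upstairs and observing that it automatically lies over $1\in G$ because $R_\Sigma\subseteq R$, and both compute the kernel as $\frac{R_\Sigma\cap[F,F]}{[F,R]}$. The packaging through the commutative square with $G\times G_\emptyset^{ab}$ is a slightly tidier presentation but not a different proof.
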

\begin{proof}
We first define the morphism $H_2(G,\ZZ)\to H_{2,\Sigma}(G)$.\\
By  Hopf's Theorem  we identify $H_2(G,\ZZ)$ with $\frac{R\cap [F,F]}{[F,R]}$ (cf. \cite{Brown}). 
On the other hand we have: 
\begin{align*}
H_{2,\Sigma}(G)= \ker \left( G_\Sigma \to G\right) \cap \ker \left( G_\Sigma \to G_\Sigma^{ab}\right)  = \frac{R}{ R_\Sigma} \cap[G_\Sigma , G_\Sigma]\, .
\end{align*}
By Lemma \ref{centr},  $R_\Sigma \subset R$.
The homomorphism $R\cap [F,F]\to \frac{R}{ R_\Sigma}$, $r\mapsto rR_\Sigma$, takes values in $H_{2,\Sigma}(G)$.
Moreover it descends to a group homomorphism $H_2(G,\ZZ)\to H_{2,\Sigma}(G)$ because $[F,R]\subset R_\Sigma$.

To prove the surjectivity, let 
$$
aR_\Sigma \in \frac{R}{R_\Sigma}\cap [G_\Sigma,G_\Sigma] \, .
$$
Since $aR_\Sigma \in [G_\Sigma , G_\Sigma] = \frac{[F,F]\cdot R_\Sigma}{R_\Sigma}$, we may assume $a\in [F,F]$.
>From $aR_\Sigma \in \frac{R}{R_\Sigma}$, we have $aR_\Sigma=rR_\Sigma$, for some $r\in R$.
Since $R_\Sigma \subset R$, we deduce that $a\in R$ and so the surjectivity follows.

The kernel of the morphism so defined is $\frac{R_\Sigma \cap[F,F]}{[F,R]}$.

Since $H_2(G,\ZZ)$ is abelian, so is $H_{2,\Sigma}(G)$.
\end{proof}

\begin{prop}
Let $v_1 , v_2\in HS(G;g',d)$ be two Hurwitz generating systems in $G$ with the same $\nu$-type.
Then $\Sigma_{v_1}=\Sigma_{v_2}=:\Sigma$. Moreover, if $ev(v_1)=ev(v_2)\in G$, then the element
$$
\e({v_1})^{-1} \cdot \e({v_2}) \:\in\: 
H_{2,\Sigma}(G)
$$
is invariant under  the  group $\widetilde{Map}(g',d)$ of isotopy classes of orientation-preserving diffeomorphisms
of the pair $(C',\mathcal{B})$ that fix $y_0$. In particular:
\begin{enumerate}
\item
if $v_1$and $ v_2$ are equivalent then the element is trivial;
\item
if the element is non-trivial, then $v_1$ and $v_2$  are 
inequivalent.
\end{enumerate} 
\end{prop}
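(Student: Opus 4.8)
The plan is to check directly that $\e(v_1)^{-1}\e(v_2)$ satisfies the two kernel conditions defining $H_{2,\Sigma}(G)$, and then to upgrade the $Map(g',d)$-invariance of Proposition \ref{e-inv} (which a priori holds only in $(G^\cup)/_{Aut(G)}$) to an honest invariance of this difference under $\widetilde{Map}(g',d)$, the decisive point being that $\e(v_1)^{-1}\e(v_2)$ is a central element of $G_\Sigma$. The equality $\Sigma_{v_1}=\Sigma_{v_2}=:\Sigma$ is immediate, since two Hurwitz systems with the same Nielsen vector $\nu(v_1)=\nu(v_2)$ have the same conjugacy classes occurring among their first $d$ entries, and $\Sigma_{v_i}$ is by definition the union of these classes; thus $\e(v_1),\e(v_2)\in G_\Sigma$.

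For the membership I would evaluate the difference under the two maps of Definition \ref{H2Gamma}. Since $\alpha$ is a homomorphism with $\alpha(\widehat g)=g$, one has $\alpha(\e(v_i))=ev(v_i)$, and $ev(v_i)=1$ by condition (iii) of Definition \ref{factorisations} (consistent with the hypothesis $ev(v_1)=ev(v_2)$); hence $\e(v_1)^{-1}\e(v_2)\in\ker\alpha=K_\Sigma$. On the other hand, by Remark \ref{evsnu} the image of $\e(v_i)$ in $G_\Sigma^{ab}$ is exactly the Nielsen vector $\nu(v_i)$, so equality of $\nu$-types forces the difference to map to $0$ in $G_\Sigma^{ab}$. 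The two conditions together place $\e(v_1)^{-1}\e(v_2)$ in $\ker\bigl(G_\Sigma\to G\times G_\Sigma^{ab}\bigr)=H_{2,\Sigma}(G)$.

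For the invariance I would first record that, by Lemma \ref{centr}, $K_\Sigma$ is central in $G_\Sigma$, so each $\e(v_i)\in K_\Sigma$ is central. Let $\varphi\in\widetilde{Map}(g',d)$; since $\varphi$ fixes $y_0$ it acts by an honest automorphism of $\pi_1(C'\setminus\mathcal{B},y_0)$, hence on $HS(G;g',d)$ itself, and the forgetful map $\widetilde{Map}(g',d)\to Map(g',d)$ has kernel generated by the point-pushes of $y_0$, which act by global conjugation $v\mapsto gvg^{-1}$ (see Appendix A). A short computation, using the defining relations of $G_\Sigma$ (valid because each $c_j\in\Sigma$) together with Lemma \ref{abel}, gives $\e(gvg^{-1})=u^{-1}\,\e(v)\,u$ with $u=\widehat{g^{-1}}\in G_\Sigma$, i.e.\ a point-push alters $\e(v)$ only by a conjugation in $G_\Sigma$. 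Combining this with the computation in the proof of Proposition \ref{e-inv} — braid invariance from Lemma \ref{braid-invar}, then Lemmas \ref{conj-lift} and \ref{abel} for the pure part — one obtains for every such $\varphi$ a relation $\e(\varphi\cdot v_i)=u^{-1}\e(v_i)\,u$ with $u\in G_\Sigma$. As $\e(v_i)$ is central this reads $\e(\varphi\cdot v_i)=\e(v_i)$, and therefore $\e(v_1)^{-1}\e(v_2)$ is $\widetilde{Map}(g',d)$-invariant.

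The main obstacle is precisely this last step. Proposition \ref{e-inv} yields invariance only modulo $Aut(G)$, because $Map(g',d)$ acts on $\pi_1$ merely as outer automorphisms and the base point may be pushed about, introducing a conjugation that a priori depends on $v_i$ through the monodromy. The resolution is that, once one has passed to the subgroup fixing $y_0$, this discrepancy is always realised by a conjugation in $G_\Sigma$, which is trivial on the central elements $\e(v_i)$; equivalently, working on the difference one may invoke Lemma \ref{H2toH2Gamma}, exhibiting $H_{2,\Sigma}(G)$ as a quotient of $H_2(G,\ZZ)$, together with the classical fact that inner automorphisms act trivially on $H_2(G,\ZZ)$ (this variant also covers the case $ev(v_i)\neq 1$, where only the difference is central). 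Finally, (1) and (2) are formal: (2) is the contrapositive of (1), and (1) follows at once from the invariance, since if $v_1$ and $v_2$ lie in the same orbit then $\e(v_1)=\e(v_2)$ and the difference is trivial; to deduce inequivalence under the full relation $\approx$ one tracks in addition the $Aut(G)$-action on $H_{2,\Sigma}(G)$, under which $Inn(G)$ acts trivially.
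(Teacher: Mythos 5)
The paper states this proposition without its own proof, so there is nothing to compare against line by line; your argument is correct and is exactly the one the surrounding text sets up: membership in $H_{2,\Sigma}(G)$ follows from $\alpha(\e(v_i))=ev(v_i)=1$ together with Remark \ref{evsnu} and Definition \ref{H2Gamma}, and the invariance follows from Lemmas \ref{braid-invar}, \ref{conj-lift}, \ref{abel} as in Proposition \ref{e-inv}, combined with the observation that the base-point/point-pushing ambiguity only conjugates $\e(v_i)$ inside $G_\Sigma$, which is harmless because $\e(v_i)$ lies in the central subgroup $K_\Sigma$ (Lemma \ref{centr}). That centrality remark is precisely the one addition needed to upgrade Proposition \ref{e-inv} from invariance modulo $Aut(G)$ to honest $\widetilde{Map}(g',d)$-invariance, and you identify and use it correctly (your parenthetical about the case $ev(v_i)\neq 1$ is the only shaky aside, but it is not needed for the statement as given, since Hurwitz generating systems have $ev(v_i)=1$).
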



\section{Computation of $H_{2,\Sigma}(D_n)$}

In this section we derive a complete description of 
$H_{2,\Sigma}(G)$ in the special case that $G$ is equal
to the dihedral group 
\begin{eqnarray*}
D_n & = & \langle\: x,y \:|\: x^n = 1,\: y^2 = 1,\: xy = yx^{-1} \:\rangle \\
& = & \left\{\: x^iy^j \:|\: 0\leq i < n,\: 0 \leq j < 2 \:\right\}
\end{eqnarray*}

\begin{prop}\label{H2Dn}
Let $n\in \NN$, $n\geq 3$. Then we have:
\begin{enumerate}
\item[(i)] $H_2(D_n , \mathbb{Z})$ is trivial if $n$ is odd and it is isomorphic to $\mathbb{Z}/2\mathbb{Z}$
if $n$ is even;
\item[(ii)] the natural action of ${Aut}(D_n)$ on $H_2(D_n , \mathbb{Z})$ is trivial.
\end{enumerate}
\end{prop}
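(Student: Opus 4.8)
The plan is to dispose of statement (ii) immediately by reducing it to (i), and then to put all the work into computing $H_2(D_n,\ZZ)$ via the homological Lyndon--Hochschild--Serre spectral sequence. For the reduction: once (i) is known, $H_2(D_n,\ZZ)$ is either the trivial group or $\ZZ/2\ZZ$, and in both cases its automorphism group is itself trivial (the only automorphism of $\ZZ/2\ZZ$ is the identity). Hence any homomorphism $\Aut(D_n)\to \Aut\bigl(H_2(D_n,\ZZ)\bigr)$ is forced to be constant, and (ii) follows formally. So the entire content lies in (i).

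To prove (i), I would exploit the split extension $1\to C_n \to D_n \to C_2\to 1$, where $C_n=\langle x\rangle$, $C_2=\langle y\rangle$, and the conjugation action of $C_2$ on $C_n$ is inversion $x\mapsto x^{-1}$. Consider the associated homological spectral sequence $E^2_{p,q}=H_p\bigl(C_2,H_q(C_n,\ZZ)\bigr)\Rightarrow H_{p+q}(D_n,\ZZ)$. The three terms on the total degree $2$ diagonal are $E^2_{2,0}$, $E^2_{1,1}$, $E^2_{0,2}$. Now $H_2(C_n,\ZZ)=0$, since the Schur multiplier of a cyclic group is trivial, whence $E^2_{0,2}=0$; and $E^2_{2,0}=H_2(C_2,\ZZ)=0$. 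The only possibly non-zero term is $E^2_{1,1}=H_1\bigl(C_2,H_1(C_n,\ZZ)\bigr)$, where $H_1(C_n,\ZZ)=\ZZ/n\ZZ$ carries the inversion action $a\mapsto -a$ of $C_2$. Computing the homology of $C_2$ with coefficients in this module through the periodic resolution, for which the norm $N=1+y$ acts as $a\mapsto a+(-a)=0$, gives $E^2_{1,1}=(\ZZ/n\ZZ)^{C_2}/N(\ZZ/n\ZZ)=\{\,a : 2a=0\,\}$, which is $0$ for $n$ odd and $\ZZ/2\ZZ$ for $n$ even.

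It remains to rule out differentials affecting $E^2_{1,1}$. The only candidate is $d^2\colon E^2_{3,0}\to E^2_{1,1}$ (note $E^2_{3,0}=H_3(C_2,\ZZ)=\ZZ/2\ZZ$ is genuinely non-zero), and here I would invoke the fact that the extension splits: the section $C_2\to D_n$ makes the edge map $H_*(D_n)\to H_*(C_2)=E^2_{*,0}$ split surjective, so the bottom row $E^2_{*,0}$ consists of permanent cycles and all outgoing differentials from it, in particular $d^2\colon E^2_{3,0}\to E^2_{1,1}$, vanish. Since the differential leaving $E^2_{1,1}$ lands in $E^2_{-1,2}=0$, we obtain $E^\infty_{1,1}=E^2_{1,1}$, and as the other two diagonal terms vanish the filtration on $H_2(D_n,\ZZ)$ collapses to $H_2(D_n,\ZZ)\cong E^2_{1,1}$, which is exactly the assertion of (i). The main obstacle is precisely this last step: one must justify that the potentially troublesome differential $d^2\colon E^2_{3,0}\to E^2_{1,1}$ is zero, and the cleanest argument is the splitting above; alternatively one could compute $H_2$ directly from Hopf's formula $(R\cap[F,F])/[F,R]$ applied to the given presentation, but that route is more calculation-heavy and I would keep it only as a cross-check.
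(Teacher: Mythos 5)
Your proof is correct, but it takes a genuinely different route from the paper's. You compute $H_2(D_n,\ZZ)$ from the Lyndon--Hochschild--Serre spectral sequence of the split extension $1\to C_n\to D_n\to C_2\to 1$: the diagonal terms $E^2_{2,0}$ and $E^2_{0,2}$ vanish, $E^2_{1,1}=H_1\bigl(C_2,\ZZ/n\ZZ\bigr)$ with the inversion action is the $2$-torsion of $\ZZ/n\ZZ$, and the splitting kills the only dangerous differential $d^2\colon E^2_{3,0}\to E^2_{1,1}$ because the bottom row consists of permanent cycles; all of these steps check out, and the reduction of (ii) to the observation that a group of order at most $2$ has trivial automorphism group is the same as the paper's. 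The paper instead realizes $D_n$ inside $SO(3)$, pulls it back to the binary dihedral group $\tilde D_n\subset SU(2)$, uses that $H_2(\tilde D_n)=0$ (finite subgroups of $S^3$ have periodic cohomology), and reads off $H_2(D_n)$ as $\ker\bigl(\ZZ/2\ZZ\to\tilde D_n^{ab}\bigr)$ from the five-term exact sequence of the central extension. The trade-off is worth noting: your argument is more self-contained and needs nothing beyond the homology of cyclic groups, whereas the paper's construction buys something it uses repeatedly afterwards, namely that $\tilde D_n$ is a Schur cover of $D_n$ for $n$ even (Corollary 4.2), so that classes in $H_2(D_n,\ZZ)$ can be evaluated concretely by lifting elements to quaternions; that explicit evaluation device is what drives Corollary 4.3 and the case analysis in Section 5, and the spectral sequence alone does not provide it.
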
 

\proof
(ii) This claim follows directly from (i) and from the fact that the neutral element of $H_2(D_n , \mathbb{Z})$ is fixed by the action of ${Aut}(D_n)$.
 
(i) Identify $D_n$ with the subgroup of $SO(3)$ generated by 
$$
x :=\left( \begin{matrix} \cos \frac{2\pi}{n} & -\sin \frac{2\pi}{n} & 0 \\ \sin \frac{2\pi}{n} & \cos \frac{2\pi}{n} & 0 \\ 0&0&1\end{matrix}\right) \quad
\mbox{and}\quad y := \left( \begin{matrix} -1 & 0 & 0 \\ 0&1&0\\ 0&0&-1 \end{matrix}\right) \, .
$$
Let $u \colon SU(2) \ra SO(3)$ be the homomorphism $q\mapsto R_q$, where we identify $SU(2)$ with the quaternions $q\in \HH$
of norm $1$, $\RR^3$ with $Im\HH$, and $R_q(x)=qx\ol{q}$.
Consider the binary dihedral group  $\tilde{ D}_n = u^{-1}(D_n)$. It fits in the following short exact sequence:
\begin{equation}\label{binary Dn}
1\ra \ZZ/2\ZZ \ra \tilde{ D}_n \ra D_n \ra 1\, ,
\end{equation}
from which we get the   $5$-term exact sequence (see e.g. \cite{Brown}, pg. 47, Exercise 6):
\begin{equation}\label{5-term}
H_2(\tilde{ D}_n) \ra H_2(D_n ) \ra \left( H_1(\ZZ/2\ZZ)\right)_{D_n} 
\ra H_1(\tilde{ D}_n) \ra H_1(D_n)\ra 0\, , 
\end{equation}
where all the coefficients are in $\ZZ$ and $\left( H_1(\ZZ/2\ZZ)\right)_{D_n}$ is the group of co-invariants under  the $D_n$-action
 on $\ZZ/2\ZZ$ induced by conjugation 
by $\tilde{D}_n$, hence $\left( H_1(\ZZ/2\ZZ)\right)_{D_n}= H_1(\ZZ/2\ZZ)$ since $\ZZ/2\ZZ$ is in the center of $\tilde{\rm D}_n$.\\
We have that  $H_2(\tilde{D}_n)=\{ 0\}$, since $\tilde{D}_n$ is a finite subgroup
of $SU(2)\cong S^3$ (see \cite{Brown} II pg. 47 , Exercise 7). 
Next, recall that, for any group $G$, $H_1(G,\ZZ)$ is isomorphic to the abelianization $G^{ab}$ (see \cite{Brown} pg. 36), hence
\eqref{5-term} reduces to 
$$
0 \ra H_2(D_n ) \ra \ZZ/2\ZZ \ra \tilde{D}^{ab}_n \ra \Dn^{ab}\ra 0 \, .
$$
To conclude we show that  ${ \ker} ( \tilde{ D}^{ab}_n \ra D_n^{ab} ) =\{ 0 \}$ if and only if $n$ is even. 
With the imaginary units $\underline{i},\underline{j},\underline{k}
\in \mathbb H$ let
 $$
 {\xi}= \cos \left( \frac{\pi}{n} \right) 
 +\underline{k}\cdot \sin \left( \frac{\pi}{n} \right) \in u^{-1}(x) 
 \quad \mbox{and} \quad 
 {\eta}= \underline{j}\in u^{-1}(y)\, .
 $$ 
 Since $[ {\xi}^\ell ,  {\eta}]={\xi}^{2\ell}$, $\forall \ell$, we see that, if $n$ is odd, $\xi^n \not\in [\tilde{D}_n, \tilde{D}_n]$, but $u(\xi^n)=1$
 and hence ${ \ker} ( \tilde{ D}^{ab}_n \ra D_n^{ab} ) \not=\{ 0 \}$. When $n$ is even, $\tilde{D}_n^{ab}\cong \ZZ/2\ZZ\times \ZZ/2\ZZ$
 and the map $ \tilde{ D}^{ab}_n \ra D_n^{ab} $ is an isomorphism.
   \qed
 
\medskip
 
Using Lemma 3.2 from \cite{Wiegold}, we deduce the following
\begin{cor}\label{Schur cover}
Let $n\in \NN$, $n\geq 4$ even. Then, the binary dihedral group  $\tilde{ D}_n$ is a Schur cover of $D_n$
and the exact sequence \eqref{binary Dn} identifies $\ZZ/2\ZZ$ with $H_2(D_n,\ZZ)$.
In particular, for any $(a_1,b_1, \dots , a_{g'}, b_{g'}) \in (D_n)^{2g'}$ with $\prod_1^{g'}[a_i,b_i]=1$, the image 
of $\prod_1^{g'}[\widehat{a_i},\widehat{b_i}]\in R\cap [F,F]$ in $H_2(D_n,\ZZ)=\frac{R\cap [F,F]}{[R,F]}$ is given
by $\prod_1^{g'}[\tilde{a}_i, \tilde{b}_i]$, where $\tilde{a}_i, \tilde{b}_i\in \tilde{D}_n$ are liftings of $a_i, b_i$. 
\end{cor}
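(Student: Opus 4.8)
The plan is to repackage the homological computation already done in the proof of Proposition~\ref{H2Dn} into the language of representation groups, and then to evaluate Hopf's class on a product of commutators by comparing the free presentation of $D_n$ with $\tilde D_n$ through the functoriality of the five-term exact sequence.

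First I would settle that $\tilde D_n$ is a Schur cover. By Proposition~\ref{H2Dn} we have $H_2(D_n,\ZZ)\cong \ZZ/2\ZZ$, so $|M(D_n)|=2$; and \eqref{binary Dn} is a central extension whose kernel $\ZZ/2\ZZ$ has order $2=|M(D_n)|$. To apply Lemma~3.2 of \cite{Wiegold} it remains only to check that \eqref{binary Dn} is a \emph{stem} extension, i.e.\ that the central $\ZZ/2\ZZ$ lies in $[\tilde D_n,\tilde D_n]$. This is exactly the last line of the proof of Proposition~\ref{H2Dn}: for $n$ even the natural map $\tilde D_n^{ab}\to D_n^{ab}$ is an isomorphism, so the generator of $\ker(\tilde D_n\to D_n)$, which maps to $0$ in $D_n^{ab}$, must already vanish in $\tilde D_n^{ab}$, hence it lies in $[\tilde D_n,\tilde D_n]$. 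A stem extension whose kernel has order $|M(G)|$ is a representation group, so $\tilde D_n$ is a Schur cover of $D_n$. The identification of $\ZZ/2\ZZ$ with $H_2(D_n,\ZZ)$ is then furnished by the transgression $\delta\colon H_2(D_n,\ZZ)\to \ZZ/2\ZZ$ of \eqref{5-term} applied to \eqref{binary Dn}: it is injective because $H_2(\tilde D_n)=0$ and surjective because the map $\ZZ/2\ZZ\to \tilde D_n^{ab}$ is zero (the extension being stem), hence an isomorphism—this is precisely the sequence displayed in the proof of Proposition~\ref{H2Dn}.

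For the ``in particular'' statement I would compare Hopf's presentation with $\tilde D_n$ via a lift of the free presentation. Choose the homomorphism $\psi\colon F\to \tilde D_n$ lifting $F\to D_n$ with $\psi(\widehat{a_i})=\tilde a_i$ and $\psi(\widehat{b_i})=\tilde b_i$ (possible since $F$ is free). Then $\psi(R)\subseteq \ZZ/2\ZZ$, and $\psi|_R$ kills $[F,R]$ because $\ZZ/2\ZZ$ is central, so it descends to $\bar\psi\colon R/[F,R]\to \ZZ/2\ZZ$; thus $\psi$ is a morphism of central extensions from $(F,R)$ to $(\tilde D_n,\ZZ/2\ZZ)$ over $\mathrm{id}_{D_n}$. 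Naturality of \eqref{5-term} with respect to $\psi$ then yields the commutative triangle
$$
H_2(D_n,\ZZ)\;\xrightarrow{\ \cong\ }\;\frac{R\cap[F,F]}{[F,R]}\;\xrightarrow{\ \bar\psi\ }\;\ZZ/2\ZZ ,
$$
whose first arrow is Hopf's isomorphism \eqref{Hopf} and whose composite is exactly $\delta$. Since $w:=\prod_1^{g'}[\widehat{a_i},\widehat{b_i}]$ lies in $R\cap[F,F]$ and represents the class in question, I would conclude
$$
\delta\bigl([w]\bigr)=\bar\psi\bigl(w\,[F,R]\bigr)=\psi(w)=\prod_1^{g'}[\tilde a_i,\tilde b_i]\in \ZZ/2\ZZ ,
$$
which is the assertion; independence of the chosen liftings $\tilde a_i,\tilde b_i$ follows from the centrality of $\ZZ/2\ZZ$, since changing a lift by a central element leaves a commutator unchanged.

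The step I expect to be the main obstacle is the verification that the map induced by $\psi$ on Hopf's model of $H_2$ really coincides with the transgression $\delta$ of \eqref{binary Dn}, i.e.\ the naturality of the five-term sequence with respect to the morphism of extensions $\psi$ together with the identifications $[\tilde D_n,\ZZ/2\ZZ]=1$ and $\ZZ/2\ZZ\subseteq[\tilde D_n,\tilde D_n]$. Everything else—the orders, the stem property, and the centrality arguments—is either immediate or already contained in Proposition~\ref{H2Dn}.
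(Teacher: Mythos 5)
Your proposal is correct and follows essentially the route the paper intends: the paper gives no details beyond citing Wiegold's Lemma 3.2 and the five-term computation in Proposition \ref{H2Dn}, and your write-up supplies exactly the missing standard steps (the stem property of \eqref{binary Dn} via the isomorphism $\tilde D_n^{ab}\to D_n^{ab}$ for $n$ even, the order count identifying the transgression as an isomorphism, and the compatibility of Hopf's formula with the transgression through a lift $F\to\tilde D_n$). The naturality step you flag as the main obstacle is indeed the only nontrivial verification, and it is the standard one underlying the Hopf formula, so nothing is missing.
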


Next the union $\Sigma$ of non-trivial conjugacy classes comes
into play. Recall that the set of reflections
$\{ x^iy \;|\; 0\leq i<n \}$
is a single conjugacy class in case $n$ is odd, and splits
into two conjugacy classes in case $n$ even according to the
parity of $i$.

The conjugacy classes in the set of rotations
$\{ x^i \;|\; 0\leq i<n \}$
are of the kind $\{ x^i, x^{n-i}\}$ and contain two elements
except for $i=0$ and $i=\frac n2$ in case $n$ is even.

\begin{cor}\label{H2GammaD}
Let $\Sigma \subset D_n$ be  a
 union of non-trivial conjugacy classes, $\Sigma \not= \emptyset$. Then $H_{2,\Sigma}(D_n)=\{0\}$ in the following cases: 
$n$ is odd;
$n$ is even and $\Sigma $  contains some reflection; $n$ is even and  $\Sigma $  contains 
the non-trivial central element. In the remaining case, $H_{2,\Sigma}(D_n)=\ZZ/2\ZZ$.
\end{cor}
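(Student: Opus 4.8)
The plan is to route everything through the short exact sequence of Lemma \ref{H2toH2Gamma},
$$
1\to \frac{R_\Sigma \cap [F,F]}{[F,R]}\to H_2(D_n,\ZZ)\to H_{2,\Sigma}(D_n)\to 1 \, ,
$$
which exhibits $H_{2,\Sigma}(D_n)$ as a quotient of $H_2(D_n,\ZZ)$. If $n$ is odd, then $H_2(D_n,\ZZ)=0$ by Proposition \ref{H2Dn}, so $H_{2,\Sigma}(D_n)=0$ and there is nothing left to do. For $n$ even we have $H_2(D_n,\ZZ)=\ZZ/2\ZZ$; I denote its nontrivial class by $\omega$ and by $\pi(\omega)$ its image in $G_\Sigma$ (equivalently in $H_{2,\Sigma}(D_n)$). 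Since $H_{2,\Sigma}(D_n)$ is then a quotient of $\ZZ/2\ZZ$, the whole statement collapses to one binary question: is $\pi(\omega)=1$ in $G_\Sigma$? By the sequence above, $H_{2,\Sigma}(D_n)=0$ precisely when $\omega$ has a representative lying in $R_\Sigma$, and $H_{2,\Sigma}(D_n)=\ZZ/2\ZZ$ otherwise.

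The first real step is to produce a usable representative of $\omega$. Using the Schur cover $\tilde D_n$ and Corollary \ref{Schur cover}, I would identify $H_2(D_n,\ZZ)=\frac{R\cap[F,F]}{[F,R]}$ with $\{\pm1\}\subset\tilde D_n$ by the natural-lift homomorphism $F\to\tilde D_n$ sending $\widehat{x^i}\mapsto\xi^i$ and $\widehat{x^iy}\mapsto\xi^i\eta$ (with $\xi,\eta$ as in the proof of Proposition \ref{H2Dn}); set $z:=x^{n/2}$, the nontrivial central element, so that $\hat z=\widehat{x^{n/2}}\mapsto\xi^{n/2}$. The decisive computation is that of $[\,\cdot\,,\xi^{n/2}]$ in $\tilde D_n$: one gets $[\xi^i,\xi^{n/2}]=1$ for rotations, while $[\eta,\xi^{n/2}]=\xi^{-n}=-1$ and more generally $[\xi^i\eta,\xi^{n/2}]=-1$ for every reflection. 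Hence, for any reflection $t$ the commutator $[\hat t,\hat z]\in F$ lies in $R\cap[F,F]$ (it maps to $[t,z]=1$ in $D_n$ because $z$ is central) and maps to $-1$ in $\tilde D_n$, so $[\hat t,\hat z]$—and likewise $[\hat y,\hat z]$—represents $\omega$. This is the key identity: $\omega$ is the anomalous commutator of the central element $z$ with any reflection.

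With this representative the vanishing cases follow quickly, all from the $\Sigma$-relations of Definition \ref{GGamma} taken with $b=z$ (central, so $b^{-1}ab=a$). If $\Sigma$ contains a reflection $t$, then $a=t\in\Sigma$, $b=z$ gives $\hat t\hat z\hat t^{-1}\hat z^{-1}=[\hat t,\hat z]\in R_\Sigma$; since $[\hat t,\hat z]$ represents $\omega$, the map $H_2(D_n,\ZZ)\to H_{2,\Sigma}(D_n)$ kills $\omega$ and $H_{2,\Sigma}(D_n)=0$. If instead $\Sigma$ contains $z$, take $a=z\in\Sigma$, $b=y$: then $[\hat z,\hat y]\in R_\Sigma$, and $[\hat z,\hat y]=[\hat y,\hat z]^{-1}$ again represents $\omega$, so once more $H_{2,\Sigma}(D_n)=0$. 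For the remaining case—$\Sigma$ a nonempty union of classes of non-central rotations $\{x^i,x^{-i}\}$ with $i\neq 0,n/2$—I would instead detect $\omega$ by showing that the natural-lift map descends to a homomorphism $\psi\colon G_\Sigma\to\tilde D_n$. Indeed $[F,R]$ dies because $\tilde D_n\to D_n$ is central, and for $a=x^i$ a non-central rotation the lifts are conjugation-compatible, $\mathrm{nl}(b)^{-1}\xi^i\,\mathrm{nl}(b)=\mathrm{nl}(b^{-1}x^ib)=\xi^{\pm i}$ for every $b$ (sign $+$ for $b$ a rotation, and $\{x^i,x^{-i}\}$ a class for $b$ a reflection), so every $\Sigma$-relator maps to $1$. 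Then $\psi(\pi(\omega))=\psi(\pi([\hat y,\hat z]))=[\eta,\xi^{n/2}]=-1\neq1$, whence $\pi(\omega)\neq1$ and $H_{2,\Sigma}(D_n)=\ZZ/2\ZZ$.

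The main obstacle, and the conceptual heart of the statement, is the sign bookkeeping in $\tilde D_n$ that pins down exactly which elements make $[\,\cdot\,,\hat z]$ anomalous. The entire dichotomy is governed by the single fact that, in the Schur cover, the lift of the central involution $z=x^{n/2}$ commutes with lifts of rotations but anticommutes with lifts of reflections. Translating this into which $\Sigma$-relations are actually imposed—verifying that non-central rotations force no relation touching $\omega$, whereas a reflection, or $z$ itself, does—is where the care lies; once the representative $\omega=[\hat t,\hat z]$ is in place, everything else is formal.
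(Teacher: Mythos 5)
Your treatment of the odd case and of the two vanishing cases for $n$ even is exactly the paper's: in each of those cases you exhibit the relator $[\hat t,\widehat{x^{n/2}}]$ (resp.\ $[\widehat{x^{n/2}},\hat y]$) inside $R_\Sigma\cap[F,F]$, observe via Corollary \ref{Schur cover} that it represents the generator of $H_2(D_n,\ZZ)$, and conclude by Lemma \ref{H2toH2Gamma}. For the remaining case ($n=2k$, $\Sigma$ made of non-central rotation classes) your route genuinely differs from the paper's: the paper shows directly, by an explicit computation in $F/[F,R]$, that $R_\Sigma\cap[F,F]\subset[F,R]$, whereas you try to detect $\omega$ by pushing it forward along a homomorphism $G_\Sigma\to\tilde D_n$. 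That detection strategy is attractive and can be made to work, but as written it contains a genuine error.

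The problem is the claim that the ``natural lift'' $\widehat{x^j}\mapsto\xi^j$, $\widehat{x^jy}\mapsto\xi^j\eta$ kills every $\Sigma$-relator. Take $a=x^i\in\Sigma$ with $0<i<k$ and $b=x^jy$; then $c=b^{-1}ab=x^{-i}=x^{n-i}$, and the relator is $\widehat{x^i}\,\widehat{x^jy}\,\widehat{x^{n-i}}^{-1}\widehat{x^jy}^{-1}$ (this is exactly the generator of $R_\Sigma$ listed in the paper's proof). Its image under your map is
$$
\xi^i\,(\xi^j\eta)\,\xi^{-(n-i)}\,(\xi^j\eta)^{-1}=\xi^i\,\xi^{n-i}=\xi^{n}=-1\neq 1,
$$
because $\eta\xi^m\eta^{-1}=\xi^{-m}$ and $\xi^n=-1$ in $\tilde D_n$. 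Your identity $\mathrm{nl}(b)^{-1}\xi^i\mathrm{nl}(b)=\mathrm{nl}(b^{-1}x^ib)$ silently replaces $\mathrm{nl}(x^{-i})=\xi^{n-i}$ by $\xi^{-i}$, and these differ by the central element $\xi^n=-1$ — precisely the sign you are trying to track. So the map does not descend to $G_\Sigma$ and the verification fails. The fix is to abandon the uniform lift on rotations and instead choose the section \emph{equivariantly on each class of $\Sigma$}: for each class $\{x^i,x^{-i}\}\subset\Sigma$ with $0<i<k$ send $\widehat{x^i}\mapsto\xi^i$ and $\widehat{x^{n-i}}\mapsto\xi^{-i}$ (the lifts of the remaining generators, in particular of $x^k\notin\Sigma$, may be chosen arbitrarily since only $[F,R]$ constrains them, and that dies because the kernel of $\tilde D_n\to D_n$ is central). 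With this choice every $\Sigma$-relator does map to $1$, the homomorphism $\psi\colon G_\Sigma\to\tilde D_n$ exists, and $\psi$ sends the class of $[\hat y,\widehat{x^{k}}]$ to $-1$, giving $H_{2,\Sigma}(D_n)=\ZZ/2\ZZ$ as claimed. Once repaired, your argument is a clean alternative to the paper's hands-on computation modulo $[F,R]$.
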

\begin{proof}
 If $n$ is odd, then $H_2(D_n,\ZZ)=\{0\}$ and hence  $H_{2,\Sigma}(D_n)=\{0\}$ for any $ \Sigma$
 (Lemma \ref{H2toH2Gamma}).
 
If $n=2k$ and $\Sigma$ contains some reflection, say $y\in \Sigma$, then $\hat{y}\widehat{x^k}\hat{y}^{-1}\widehat{x^k}^{-1}\in R_\Sigma \cap [F,F]$.
But the image of this element in $H_2(D_n,\ZZ)$ is not trivial (Corollary \ref{Schur cover}), hence  $H_{2,\Sigma}(D_n)=\{0\}$ (Lemma \ref{H2toH2Gamma}).
The same argument works if $xy \in \Sigma$. 

Assume now $n=2k$ and $x^k\in \Sigma$. Then $\widehat{x^k}\hat{y}\widehat{x^k}^{-1}\hat{y}^{-1}\in R_\Sigma \cap [F,F]$
and its image in $H_2(D_n,\ZZ)$ is not trivial, hence $H_{2,\Sigma}(D_n)=\{ 0\}$ also in this case.

Finally, if $n=2k$ and $\Sigma \subset \ZZ/n\ZZ \setminus \{ x^k \}$, then 
$$
R_\Sigma =\langle \langle [F,R], \, \widehat{x^\a} \widehat{x^\b} \widehat{x^\a}^{-1}\widehat{x^\b}^{-1}, \, 
\widehat{x^\a} \widehat{x^\b y} \widehat{x^{n-\a}}^{-1}\widehat{x^\b y}^{-1} \, | \, x^\a \in \Sigma\rangle \rangle\, .
$$
First we note that
the image of $ \widehat{x^\a} \widehat{x^\b} \widehat{x^\a}^{-1}\widehat{x^\b}^{-1}$ in $H_2(D_n,\ZZ)$ is $0$.
Second, the elements $\widehat{x^\a} \widehat{x^\b y} 
\widehat{x^{n-\a}}^{-1}\widehat{x^\b y}^{-1}$ generate an abelian
group modulo $[F,R]$.
Last, the intersection of this subgroup with $[F,F]/[F,R]$ is generated
by elements represented by 
\begin{eqnarray*}
&&
\widehat{x^\a} \widehat{x^\b y} \widehat{x^{n-\a}}^{-1}\widehat{x^\b y}^{-1}
\quad\cdot\quad
\widehat{x^{n-\a}} \widehat{x^\gamma y} \widehat{x^{\a}}^{-1}\widehat{x^\gamma y}^{-1} \, .
\end{eqnarray*}
It remains to show that these are trivial modulo $[F,R]$, in fact
\begin{eqnarray*}
& \equiv & \widehat{x^\b y}^{-1}
\widehat{x^\a} \widehat{x^\b y} \widehat{x^{n-\a}}^{-1}
\quad\cdot\quad
\widehat{x^{n-\a}} \widehat{x^\gamma y} \widehat{x^{\a}}^{-1}\widehat{x^\gamma y}^{-1} \\
& \equiv & \widehat{x^\b y}^{-1}
\widehat{x^\a} \widehat{x^\b y} 
\quad
 \widehat{x^\gamma y} \widehat{x^{\a}}^{-1}\widehat{x^\gamma y}^{-1}\\
& \equiv & \widehat{x^\b y}^{-1}
\widehat{x^\a} \quad 
\underbrace{
\widehat{x^\b y} \quad 
 \widehat{x^\gamma y} 
 \quad \widehat{x^{\gamma-\b}} }_{\in R}
 \quad \widehat{x^{\gamma-\b}}^{-1} \quad
 \widehat{x^{\a}}^{-1}\widehat{x^\gamma y}^{-1}\\
& \equiv & \widehat{x^\b y}^{-1}
\quad \widehat{x^\b y} \quad 
 \widehat{x^\gamma y} 
 \quad \widehat{x^{\gamma-\b}} \quad
\widehat{x^\a}  \quad \widehat{x^{\gamma-\b}}^{-1} \quad
 \widehat{x^{\a}}^{-1}\widehat{x^\gamma y}^{-1} \\
& \equiv &  \widehat{x^\gamma y} 
 \quad \widehat{x^{\gamma-\b}} \quad
\widehat{x^\a}  \quad \widehat{x^{\gamma-\b}}^{-1} \quad
 \widehat{x^{\a}}^{-1}\widehat{x^\gamma y}^{-1} \\
& \equiv &   \widehat{x^{\gamma-\b}} \quad
\widehat{x^\a}  \quad \widehat{x^{\gamma-\b}}^{-1} 
 \widehat{x^{\a}}^{-1} \, .
\end{eqnarray*}
This last element is trivial modulo $[F,R]$ as noted first. We deduce
 that $\frac{R_\Sigma \cap [F,F]}{[F,R]}=\{ 0 \}$ and hence $H_{2,\Sigma}(D_n)\cong H_2(D_n,\ZZ)\cong \ZZ/2\ZZ$, by Lemma \ref{H2toH2Gamma}.

\end{proof}


\section{The injectivity of $\hat{\e}$ when $G=D_n$}
Recall the following

\noindent \textbf{Notation.} For any Hurwitz vector  $v=(c_1,\, \dots \, , c_d; a_1, b_1, \, \dots \, , a_{g'}, b_{g'})\in G^{d+2g'}$, 
$$
ev(v)=\prod_{i=1}^d c_i\cdot \prod_{j=1}^{g'}[a_j,b_j] \in G \, ,
$$
while, if $c_i\not=1$, $\forall i$, $\e (v)=ev (\hat{v})\in G_{\Sigma_v}$, where $\hat{v}\in  (G_{\Sigma_v})^{d+2g'}$ is the tautological lifting (Definition \ref{ev}).

\medskip

In this section we prove the following
\begin{theo}\label{Dn-case}
Let $G=D_n$, the dihedral group of order $2n$. Then, $\forall g', d$, we have:
\begin{itemize}
\item[(i)]
$\hat{\e}\colon \left( HS(G;g',d)/_{Aut(G)}\right)/_{Map(g',d)} \to  (G^\cup)/_{Aut(G)} : =( {\coprod}_\Sigma G_\Sigma)/_{Aut(G)}  $

 is injective;
\item[(ii)]
The image $Im(\hat{\e})$ is the inverse image of $Im(\hat{\nu})$  in $( {\coprod}_\Sigma K_\Sigma)/_{Aut(G)}$,
where  $K_\Gamma$ is defined in Def. \ref{GGamma}. 
In other words, for any $\Gamma \subset G$, union of non trivial conjugacy classes, 
$Im({\e})\cap G_\Gamma = H_{2,\Gamma}(G)\cdot (Im(\e)\cap G_\Gamma)$.
\item[(iii)] For $g' \geq 2$,  $Im(\hat{\nu})$ is just the set of admissible class functions $\nu$. For $g' = 1$, 
$Im(\hat{\nu})$ is the union of
the set of admissible class functions $\nu$   for which $\Ga$ contains  some reflection, together with a  subset $\sS$ of those  for which $\Ga$ generates a subgroup 
$H$ of index at most $2$ in the subgroup $\sR$ of rotations, $\sS$  contains the subset for which $H = \sR$.

\end{itemize}
\end{theo}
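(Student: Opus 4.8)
The plan is to prove the three parts in tandem, with the computation of $H_{2,\Sigma}(D_n)$ (Corollary \ref{H2GammaD}) serving as the common engine. For (i), assume $\hat{\e}(v_1)=\hat{\e}(v_2)$. Since by Remark \ref{evsnu} the $\nu$-type is the image of $\e$ in the abelianization, the two systems have the same $\nu$-type; after applying a suitable automorphism to $v_2$ (Lemma \ref{aut-inv}) I may assume $\Sigma_{v_1}=\Sigma_{v_2}=:\Sigma$ and $\e(v_1)=\e(v_2)$ in $G_\Sigma$. The core is then a normal-form classification of $HS(D_n;g',d)$ under $\Aut(D_n)$ and $Map(g',d)$: using the elementary braid moves (Lemma \ref{braid-invar}), which leave $\e$ unchanged, I would sort the local monodromies into standard blocks of reflections and rotations; then, acting by $Map(g',d)$ on the genus part, I would reduce the handles $a_i,b_i$ to a canonical arrangement. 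The assertion to establish is that this normal form is determined by the pair $(\hat{\nu}(v),\e(v))$, so that equality of $\e$ forces $v_1\approx v_2$.

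For (ii) I rewrite the claim as: $Im(\e)\cap G_\Ga$ is a union of cosets of $H_{2,\Ga}(D_n)$. By Corollary \ref{H2GammaD} this group vanishes unless $n$ is even, $\Ga$ consists only of rotations, and $\Ga$ omits the central element $x^{n/2}$, where it is $\ZZ/2\ZZ$. Thus the only thing to check is, in that exceptional case, that whenever $\e(v)=\xi$ occurs so does $h\cdot\xi$, with $h$ the generator of $H_{2,\Ga}(D_n)$. I would produce the second system by altering a single handle: replace $a_i,b_i$ by a pair whose commutator represents the nontrivial class of $H_2(D_n,\ZZ)\cong\ZZ/2\ZZ$ via the Schur-cover description of Corollary \ref{Schur cover}, keeping $ev$ and all local monodromies fixed. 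Together with (i) and (iii) this identifies $Im(\hat{\e})$ with the preimage of $Im(\hat{\nu})$.

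Part (iii) is a realizability question for admissible Nielsen functions. For $g'\geq 2$ I would place one generating handle $a_1=x$, $b_1=y$, so that $\langle a_1,b_1\rangle=D_n$ and the generation requirement becomes automatic; the remaining handle(s) are then free to realize the element $x^{-2}(\prod c_j)^{-1}\in\langle x^2\rangle$, which lies in $[D_n,D_n]=\langle x^2\rangle$ by admissibility and is a single commutator, since every such element has the form $[x^i,y]=x^{2i}$. For $g'=1$ only one handle is available and it must both generate and carry the prescribed commutator. When $\Ga$ contains a reflection the generation is easy and admissibility suffices. When $\Ga$ consists only of rotations, writing $H=\langle\Ga\rangle=\langle x^{d_0}\rangle$ (index $d_0$ in $\sR$), the reflective handle contributes a rotation parameter $t$ subject to $2t\equiv -s\pmod n$ (where $\prod c_j=x^s$) and to $\gcd(t,d_0)=1$ for generation; a short congruence analysis shows that $d_0\geq 3$ forces $d_0\mid 2t$ and hence $\gcd(t,d_0)>1$, so realizability demands $d_0\leq 2$, i.e.\ index at most $2$. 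The case $d_0=1$ (that is $H=\sR$) always works, while the index-$2$ cases are cut out by a parity condition, defining the subset $\sS$.

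I expect the principal obstacle to be the normal-form classification behind (i): tracking braid and mapping-class moves on Hurwitz systems of $D_n$ with mixed reflections and rotations is combinatorially delicate, and it is precisely there that the vanishing or nonvanishing of $H_{2,\Sigma}(D_n)$ must be invoked to certify that $\e$ captures every surviving invariant.
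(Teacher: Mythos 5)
Your overall strategy coincides with the paper's: reduce (i) to a normal-form classification after equalizing $\Sigma$ and $\e$ via $\Aut(D_n)$, prove (ii) by exhibiting, for each occurring value of $\e$, a second Hurwitz system in the other $H_{2,\Sigma}(D_n)$-coset obtained by twisting a handle by the central element $x^{n/2}$ (which leaves $ev$ and the $c_i$'s untouched but changes the lift to the Schur cover), and prove (iii) by the commutator/generation analysis in $D_n$. Parts (ii) and (iii) are essentially complete and correct; your $g'\geq 2$ construction (forcing generation with the handle $(x,y)$ and dumping the residual element of $\langle x^2\rangle=[D_n,D_n]$ into a second handle as a single commutator $[x^i,y]$) is a harmless variant of the paper's $(y,x^{\d}y,x,1,\dots,1)$, and your congruence analysis for $g'=1$ reproduces the index-at-most-$2$ condition and the parity subset $\sS$.

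The genuine gap is in (i). What you call "the assertion to establish" — that the normal form is determined by $(\hat{\nu}(v),\e(v))$ — \emph{is} the theorem, and your sketch does not supply the tools to prove it. The elementary braid moves of Lemma \ref{braid-invar} act only on the $c_i$'s; to bring the handle entries $(a_i,b_i)$ to canonical position you need the explicit action of $\xi$-twists through the punctures on a geometric basis (Appendix A, Proposition \ref{xi-twists eqs}), the extension lemma (Proposition \ref{auts extend}), and the known irreducibility of the spaces of \'etale $\ZZ/2$- and $\ZZ/n$-covers to normalize the reduction of $v$ modulo the subgroup generated by the local monodromies. The paper then splits into three cases ($\Sigma=\emptyset$; $\Sigma$ containing a reflection, where $H_{2,\Sigma}=0$ and a single normal form per $\nu$-type exists; $\Sigma$ consisting of non-central rotations, where exactly two normal forms per $\nu$-type occur and are separated by $\e$ via Corollary \ref{Schur cover}), each requiring a separate argument (Propositions \ref{etale}, \ref{general}, and Lemmas 5.8--\ref{bad case}, the middle case also invoking Lemma 2.1 of \cite{CLP11}). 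None of this is carried out in your proposal, so injectivity — and with it the identification of $Im(\hat{\e})$ in (ii), which relies on the explicit list of normal forms — remains unproven. A secondary point: in (ii) you must also check that the handle-twisted vector is still \emph{generating}; this is automatic for the paper's normal forms because they carry a spare entry $x$, but it is not automatic for an arbitrary $v$.
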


To prove (i), let $[v_1]_\approx , [v_2]_\approx \in (HS)/_\approx$
be equivalence classes up to the combined action of $Aut(G)$
and the mapping class group
such that
$\hat{\e}([v_1]_\approx)=\hat{\e}([v_2]_\approx)$.
Then there exists an automorphism $f \in Aut(G)$ such that $f (\Sigma_{v_1})=\Sigma_{v_2}$ and $f (\e(v_1))=\e (v_2)$.
Hence, by Lemma \ref{aut-inv},  we assume without loss of generality $\Sigma_{v_1}=\Sigma_{v_2}=\Sigma$ and $\e(v_1)=\e(v_2)$, in particular
$$
\e (v_1)\cdot \e(v_2)^{-1}=0 \in H_{2,\Sigma}(G)\, .
$$

The outline of the proof is now the following. We address the following mutually 
exclusive cases:
$\Sigma=\emptyset$ (the \'etale case); $\Sigma \not=\emptyset$ and contains some reflection; 
$\Sigma \not=\emptyset$ and does not contain reflections.
In the first case, for each element  of $ HS$, we determine a normal form with respect to $\approx$,
then we show that two different normal forms are distinguished by $H_{2}(D_n,\ZZ)$  
(recall that $Aut(D_n)$ acts trivially on $H_{2}(D_n,\ZZ)$).
In the second case we will show that all  Hurwitz generating systems with the same  numerical invariants 
($n$, $g'$ and $\nu$-type)
are equivalent with respect to $\approx$ (this agrees with the fact $H_{2,\Sigma}(D_n)=\{ 0 \}$ in this case).  
In the last case, for every $v\in HS$, we determine a normal form $v'$ with respect to $\approx$.
We see that two different normal forms $v_1'$ and $v_2'$ have different invariants, 
$\e({v_1'})\not= \e({v_2'}) \in G_\Sigma$.
Finally we prove that $v_1\approx v_2$ if and only if $\exists f \in Aut(D_n)$ such that 
$f(\Sigma)=\Sigma$ and $f(\e({v_1'}))= \e({v_2'})$.
>From this  (i) follows. We refer to \cite{CLP11} for a useful description of $Aut(D_n)$.

To prove (ii), we observe that for any $v\in HS(D_n;g',d)$ the orbit $\epsilon(v)\cdot H_{2,\Gamma_v}(D_n)$
is either $\{ \e(v) \}$ or $\{ \e(v) , -\e(v) \}$ (cf. Cor. \ref{H2GammaD}). In the proof of (i) we list all possible normal forms for 
Hurwitz generating systems and we will see that, 
in the case where $H_{2,\Gamma_v}(D_n)\cong \mathbb{Z}/2\mathbb{Z}=\{ \pm 1\}$,
there exists $v'\in HS(D_n;g',d)$ with $\Gamma_{v'}=\Gamma_v$ and $\e(v')\cdot \e(v)^{-1}=-1$.

To prove (iii), observe that, given an admissible class function $\nu$, and elements $c_1, \dots ,c_d$ which
yield the given function $\nu$, the product  $c_1\cdot  \dots \cdot  c_d : = c$ is in the commutator subgroup. However,
in the dihedral group the commutator subgroup is equal to the set of commutators. Hence we may find $a_1, b_1$ such 
that $ c^{-1}  = [a_1, b_1]$, it suffices, if $ c = x^{2\d}$, to take $a_1 = x^my, b_1 =  x^{m+\d}y$.

If some $c_j$ is a reflection, without loss of generality, we may assume that $c_j = y$, hence, choosing $m=1$,
we obtain that 
$$v : = (c_1, \dots c_d; xy,  x^{1+\d}y,1,  \dots ,1)$$  is a Hurwitz generating system.
If $g' \geq 2$, we can just take the Hurwitz generating system
$$v : = (c_1, \dots c_d; y,  x^\d y ,x, 1,  \dots ,1).$$

In the case where $g' = 1$ and all the $c_j$ are rotations, observe that 
$$v : = (c_1, \dots c_d; x^my,  x^{m+\d}y)$$
is a Hurwitz generating system if the dihedral group is generated by $H$ (the subgroup generated by the $c_j$ 's),
together with $x^my$ and $x^\d$. This amounts to the condition that $H$ and $x^\d$ generate the subgroup $\sR$ of rotations.
Since $ c = x^{2\d} \in H$, we see that a necessary condition is that $H$ has index at most two in the group $\sR$ of rotations,
and a sufficient one is that $H = \sR$. In the case where the index is exactly two, so that $n = 2h$ is even, the condition is that 
$\d$ is an odd number. Observe that in this case if we  replace some $c_j = x^{2i}$ by its inverse, the number $\d$
is replaced by $ \d - 2i$, so that condition is indeed a condition on the class function $\nu$. 

\begin{rem}
For the case $g'=0$ we  defer to our previous article \cite{CLP11}, where all the normal forms were given.

But we can give a direct description  as follows.

 For $g'=0$, $Im(\hat{\nu})$ is just the set of admissible class functions $\nu$ satisfying one of  the following conditions:

(R) there are only two reflection classes, and the subgroup generated by the rotation classes
is  the whole $\sR$

(O)  $n$ is  odd and  the class function $\nu$   takes  value at least $4$ on
the reflection class 

(E) $n$ is  even,  there are at least four reflection classes and   either the class function $\nu$  takes positive value on
both  reflection classes,  or there exists a rotation class with odd exponent. 

In fact, since we want a Hurwitz generating system, it is a necessary condition that there must  be at least one 
reflection (and indeed an even number  of reflections
by admissibility). Take now an admissible class satisfying this restriction, and
take  $c_1, \dots, c_d$   realizing the admissible class.
Then their product $c$ is in the commutator subgroup, so it is any rotation
in case $n$  is odd and a rotation of type $x^{2i}$ in case $n$ is even.

Since there is a  reflection, we replace the last reflection, say $c_i$, by
$c_i c^{-1}$, and obtain thus an admissible Hurwitz  
system without changing the class function. 

We must however have an admissible  Hurwitz generating system, 
and we recall that there is   an even number of reflections among the $c_i$'s.

Up to automorphisms of $D_n$, we have reflections $y$ and $ x^{m}y$ in the components of the Hurwitz vector.

 If  we have only two reflections $y, x^{m}y$,  we are done if and only  if the subgroup generated by the rotations is 
 the whole subgroup $\sR$, since $ x^{-m} \in \sR$.

 Assume that  $n$ is odd and there are 4 reflections: assume that the first four are  $y,   x^{m_1}y, x^{m_2}y, x^{m_3}y$:
  then we can replace these
 (without changing the class function)  by 
 the following reflections $y, xy,  x^{m_2- m_1 + 1}y,  x^{m_3}y$ and we have obtained  a Hurwitz generating system.
 
 Assume that $n$ is even and there are  at least four reflections:  
  $$y,   x^{m_1}y,   x^{m_2}y,  x^{m_3}y, \dots .$$ 
  We can change the rotations by adding to $m_i$ an even integer $2h_i$, in such a way that the 
  sum $ -h_1 + h_2 - h_3 \dots \equiv 0 $. Hence if some $m_i$ is odd, we obtain $y$ and $xy$, and we have a generating system.
  If instead all $m_i$'s  are even, we obtain $y$ and $x^2y$, 
  hence we are finally done if and only if there is a rotation with odd exponent.

\end{rem}

\medskip

\noindent \textbf{Case 1: $\Sigma=\emptyset$ (the \'etale case).} 
In this case $H_{2,\Sigma}(D_n)=H_2(D_n,\ZZ)$, so $v\in HS(D_n)$ implies  $\e (v)\in H_2(D_n,\ZZ)$.
In the following, we identify $H_2(D_n,\ZZ)\cong \ZZ/2\ZZ=\{ 0,1\}$, when $n$ is even. Then we have:

\begin{prop}\label{etale}
Let $n, g' \in \NN$ with $n\geq 3$, $g'>0$. 
Then, for any $v\in HS(D_n;g')$, we have: 
\begin{itemize}
\item[(i)] 
$v\approx (y,1,x,1,\dots,1)$, if $n$ is odd or if $n$ is even and $\e (v)=0$;
\item[(ii)] 
$v\approx (y,x^{n/2},x,1,\dots,1)$, if $n$ is even and  $\e(v) =1$.
\end{itemize}

\end{prop}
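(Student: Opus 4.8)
The plan is to separate two issues: that the invariant $\e$ tells the two candidate normal forms apart, and that an explicit mapping-class reduction carries every $v$ to one of them. First I would dispose of $g'=1$: the relation $[a_1,b_1]=1$ forces $\langle a_1,b_1\rangle$ to be abelian, hence not all of the non-abelian group $D_n$, so $HS(D_n;1)=\emptyset$ and the statement is vacuous; thus assume $g'\geq 2$. Both candidates are genuine Hurwitz generating systems, since $[y,x^{n/2}]=x^{-n}=1$ and $\{x,y\}$ generates $D_n$. For the \'etale case $\Sigma=\emptyset$ one has $H_{2,\emptyset}(D_n)\cong H_2(D_n,\ZZ)$, so $\e(v)$ lands in $H_2(D_n,\ZZ)$, which is $0$ for $n$ odd and $\ZZ/2\ZZ$ for $n$ even (Proposition \ref{H2Dn}). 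Using Corollary \ref{Schur cover} and the binary-dihedral lifts $\xi,\eta$ of Proposition \ref{H2Dn}, I would check that $\e(y,1,x,1,\dots,1)$ maps to $0$, while $\e(y,x^{n/2},x,1,\dots,1)$ maps to $[\eta,\xi^{n/2}]=[\underline{j},\underline{k}]=-1$, the non-trivial element of $H_2(D_n,\ZZ)\cong\ZZ/2\ZZ$. Since $\e$ is constant on $\approx$-classes (Proposition \ref{e-inv}, together with the triviality of the $Aut(D_n)$-action on $H_2$, Proposition \ref{H2Dn}(ii)), this already shows the two normal forms are inequivalent when $n$ is even and that the dichotomy in (i)/(ii) is forced by the value of $\e(v)$; for $n$ odd only (i) can occur.

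The substance is the reduction of an arbitrary $v=(a_1,b_1,\dots,a_{g'},b_{g'})$ to a normal form, for which I would use the mapping-class moves assembled in Appendix A: the two Dehn twists of each handle (acting by Nielsen transformations that preserve the individual commutator $[a_i,b_i]$ and in particular realise the within-handle swap $a_i\leftrightarrow b_i^{\pm 1}$), the handle exchanges, and the handle-sliding moves that transfer and conjugate generators across adjacent handles. Since the entries generate $D_n$ they cannot all be rotations, so some entry is a reflection; after handle moves and an automorphism of $D_n$ normalising an arbitrary reflection to $y$, I would arrange $a_1=y$. Using the generation hypothesis and Nielsen moves I would then produce an entry that is a rotation generating the full rotation subgroup $\sR$ and, after a further automorphism, place it as $a_2=x$. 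With a reflection and a generating rotation secured in the first two handles, handles $3,\dots,g'$ are redundant for generation; I would trivialise them one at a time by handle-sliding, arriving at a vector supported on the first two handles, $(y,b_1,x,b_2,1,\dots,1)$. A final round of handle twists clears $b_2$ and pushes $b_1$ into $\sR$, and then to a representative in $\{1,x^{n/2}\}$.

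At this point the only residual ambiguity is the choice $b_1\in\{1,x^{n/2}\}$, which by the computation of the first paragraph is detected exactly by $\e(v)$; hence $v\approx(y,1,x,1,\dots,1)$ when $\e(v)=0$ and $v\approx(y,x^{n/2},x,1,\dots,1)$ when $\e(v)=1$, with only the former possible for $n$ odd. I expect the main obstacle to be the middle stage: checking that the Appendix A moves are rich enough to perform the Nielsen simplification and the successive handle trivialisations for every $g'\geq 2$ while keeping the tuple a generating system at each step --- effectively that genus $2$ already lies in the stable range for $D_n$, so that no invariant beyond the (here trivial) $\nu$-type and the $H_2$-class can survive. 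A secondary point requiring care is that the handle-sliding not secretly alter the $H_2$-class; this is guaranteed a posteriori by the $\approx$-invariance of $\e$, which lets me decouple the delicate reduction from the clean invariant computation.
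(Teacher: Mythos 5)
Your first paragraph is essentially the paper's own strategy and is correct: the two candidate normal forms are separated by $\e$, computed via Corollary \ref{Schur cover} as $[\underline{j},\underline{k}]=-1$ for the second one, Proposition \ref{e-inv} makes this an $\approx$-invariant, and the observation that $HS(D_n;1)=\emptyset$ is right. The gap is in the reduction, which is where the proposition actually lives and which you yourself flag as ``the main obstacle''. Two things are missing. First, the engine that makes the Nielsen-style simplification run inside $Map_{g'}$ is never supplied: the paper does not re-derive handle slides from scratch, it quotes the irreducibility (equivalently, the transitivity of the mapping class group action on Hurwitz systems) for \'etale \emph{abelian} covers --- first for $\ZZ/2\ZZ$, to normalize $v$ modulo the rotation subgroup to $(1,0,\dots,0)$, then for $\ZZ/n\ZZ$ applied to the residual rotation entries (\cite{cyclic}, Thm.\ 2.4, or \cite{DM}, Lemma 5.16) --- and splices the two reductions together with Proposition \ref{auts extend}. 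Without citing or proving such a transitivity statement, ``handle-sliding moves rich enough to trivialise handles $3,\dots,g'$ while keeping a generating system'' is an assertion, not an argument.

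Second, and more concretely, your step ``produce an entry that is a rotation generating the full rotation subgroup $\sR$ and place it as $a_2=x$'' fails in an exceptional configuration that the paper must treat separately. After the $\ZZ/2$-normalization one has $v\sim(x^{\ell_1}y,x^{m_1},x^{\ell_2},\dots,x^{m_{g'}})$ with $2m_1\equiv 0 \pmod n$, and when $n\equiv 2\pmod 4$ and $m_1=n/2$ it can happen that the remaining rotation entries generate only $\langle x^2\rangle$: then no single entry generates $\sR$, and generation of $D_n$ holds only because $x^{n/2}$ and $x^2$ sit in \emph{different} handles. Escaping this requires an explicit genus-two transformation (the paper's \eqref{map2}) that mixes $b_1$ into the second handle; the routine twists, exchanges and slides you list do not obviously produce it, and this is exactly the point where one must verify that a desired Nielsen move is realized by a mapping class preserving $\prod_i[a_i,b_i]=1$. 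Until that subcase and the transitivity input are nailed down, the reduction --- and hence the proposition --- is not proved; the a posteriori $\approx$-invariance of $\e$ cannot substitute for it, since it only shows the two normal forms are distinct, not that every $v$ reaches one of them.
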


\proof
Let
$$
\ol{v}= v \, \mbox{(mod $\ZZ/n\ZZ$)} \in (\ZZ/2\ZZ)^{2g'} \, .
$$
Notice that $\ol{v}\in HS(\ZZ/2\ZZ;g')$. Since the parameter space for \'etale $\ZZ/2\ZZ$-coverings of curves of a fixed genus is 
irreducible (see e.g.  \cite{DM} Lemma 5.16, or \cite{bf},  or \cite{cor1}, or \cite{cyclic}, Thm. 2.4), there exists $\varphi \in Map_{g'}$ such that
$\varphi \cdot \ol{v}=(1,0,\,  \dots \, , 0)$. Hence 
$$
\varphi \cdot v =(x^{\ell_1}y, x^{m_1}, \, \dots \, , x^{\ell_{g'}}, x^{m_{g'}})\,  .
$$
The condition $ev(\varphi\cdot v)=1$ implies that $2m_1 =0 $ (mod $ n$). Hence $m_1 = 0$  or  $m_1 = \frac{n}{2}$ (mod $n$).\\
In the first case, which is the only possible if $n$ is odd, 
$$
\varphi \cdot v =(x^{\ell_1}y, 1, x^{\ell_2}, \, \dots \, , x^{\ell_{g'}}, x^{m_{g'}})\, .
$$ 
Consider now $v':=(x^{\ell_2}, x^{m_2}, \, \dots \, , x^{\ell_{g'}}, x^{m_{g'}})$. As $v' \in HS(\ZZ/n\ZZ; g'-1)$, from 
the irreducibility of the parameter space of \'etale $\ZZ/n\ZZ$-coverings of curves of a fixed genus we deduce that
$\exists \varphi ' \in Map_{g'-1}$ such that $\varphi ' \cdot v' =(x^\l, 1, \, \dots \, , 1)$, with $(\l, n)=1$ 
(see e.g.  \cite{DM} Lemma 5.16, or \cite{bf},  or \cite{cor1}, or \cite{cyclic}, Thm. 2.4).
Now, from Proposition \ref{auts extend} it follows that $\exists \psi \in Map_{g'}$ such that 
$$
\psi \cdot v= (x^{\ell_1}y, 1,x^\l, 1, \, \dots \, , 1) \, .
$$
We obtain the normal form (i) after operating with $Aut(D_n)$. The fact that $\e(v)=0$ follows from a standard computation
(cf. Corollary \ref{Schur cover}).\\
If $m_1= \frac{n}{2}$ (mod $n$),  we have two subcases: $\langle  x^{\ell_2}, x^{m_2}, \dots , x^{\ell_{g'}}, x^{m_{g'}}\rangle = \ZZ/n\ZZ$
(which is the case when $n/2$ is even), or $\langle  x^{\ell_2}, x^{m_2}, \dots , x^{\ell_{g'}}, x^{m_{g'}}\rangle =\langle x^2 \rangle$.
Proceeding as in the case $m_1=0$ we reach the normal form (ii) in the first subcase, otherwise
we obtain $(x^{\ell_1}y, x^{\frac{n}{2}}, x^2, 1, \, \dots \, , 1)$ in the latter subcase.
In the latter case, consider the transformation
\begin{equation}\label{map2}
(a_1,b_1,a_2,b_2) \mapsto (a_2a_1,b_1, b_1a_2b_1^{-1}, a_2b_2a_2b_1^{-1})\, ,
\end{equation}
which is realized by $Map_2$ as it preserves the relation $\prod_{1}^{2}[\a_i,\b_i]=1$.
Then extend it to $\Pi_{g'}$ using Proposition \ref{auts extend} and apply the transformation so obtained to 
$(x^{\ell_1}y, x^{\frac{n}{2}}, x^2, 1, \, \dots \, , 1)$.
We obtain:
$$
v \approx (x^{\ell_1+2}y, x^{\frac{n}{2}}, x^{2}, x^{4+\frac{n}{2}}, 1, \, \dots \, , 1) \, .
$$
Since $\ZZ/n\ZZ=\langle x^{\frac{n}{2}}, x^2\rangle$, there exists $\psi\in Map_{g'}$ such that $\psi \cdot v \approx (x^{\ell_1+2}y, x^{\frac{n}{2}}, x, 1, \, \dots \, , 1) $,
therefore we obtain the normal form (ii). In both of these subcases we have $\e(v)=1$ (cf. Corollary \ref{Schur cover}). 
  \qed

\medskip

\noindent \textbf{Case 2: $\Sigma \not=\emptyset$ and contains some reflection.}

\medskip

Let $v=(c_1, \, \dots \, , c_d ; a_1, b_1, \, \dots \, , a_{g'}, b_{g'})$ be a Hurwitz generating system such that  $\{ c_1, \, \dots \, , c_d\}$
contains some reflection, actually an even number   because any product of commutators in $D_n$ is a rotation.
If $n$ is odd, all the reflections belong to the same conjugacy class, while when $n=2k$ they are divided into two classes.
Denote by  $\nu_y$ (resp. $\nu_{xy}$) the number of $c_i$'s in the class of $y$ (resp. $xy$).
As the pair $(\nu_y,\nu_{xy})$ is not $Aut(D_n)$-invariant, we define $\nu_1, \nu_2$ by the property that 
$\{ \nu_1, \nu_2\}=\{ \nu_y,\nu_{xy}\}$, $\nu_1 \leq \nu_2$ (in \cite{CLP11} we used the notation $h$ for $\nu_1$, $k$ for $\nu_2$).
Recall that,  under the above  hypotheses, $H_{2, \Sigma}(D_n)=\{0\}$ (Corollary \ref{H2GammaD}). Indeed we prove
that all the $v$'s with fixed $g', d$, $n$ and $\nu$-type are equivalent each other. 

\begin{prop}\label{general}
Let $n, g' , d \in \NN$ with $n\geq 3$, $g' , d>0$. 
Then, for any $v\in HS(D_n;g',d)$ such that $\Sigma_v$ contains some reflection, we have: 
\begin{itemize}
\item[(i)] 
$v\approx (x^{\ul{r}}, x^{1-|\ul{r}|}y, xy, y, \, \dots \,  , y \, ; \,  x, 1, \,  \dots \,  , 1, 1)$, if $n$ is odd;
\item[(ii)]
$v\approx (x^{\ul{r}}, \underbrace{x^{\e-|\ul{r}|}y, xy,\, \dots \,  , xy}_{\nu_2}, \underbrace{ y, \, \dots \,  , y}_{\nu_1} \, ; \,  x, 1, \,  \dots \,  , 1, 1)$, if $n$ is even.
\end{itemize}
Here $\ul{r}=(r_1, \dots , r_R)$, where $ R + \nu_y = d$ in case (i), $ R + \nu_y + \nu_{xy}= d$ in case (ii),
 $0<r_i\leq r_{i+1}\leq \frac{n}{2}$, $x^{\ul{r}}=(x^{r_1}, \dots , x^{r_{ R}})$, $|\ul{r}|=\sum r_i$ (mod $n$),
$\{ \nu_1, \nu_2\}=\{ \nu_y,\nu_{xy}\}$, $\nu_1 \leq \nu_2$, $\e\in \{ 0,1\}$, $\e+\nu_2 \equiv 1  (\!\!\!\! \mod 2)$.
\end{prop}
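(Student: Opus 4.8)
The plan is to prove the proposition by exhibiting, for an arbitrary $v\in HS(D_n;g',d)$ whose local monodromies include a reflection, an explicit chain of transformations — elementary braid moves, the handle transformations of Appendix A realized inside $Map(g',d)$, and automorphisms of $D_n$ — carrying $v$ to the displayed normal form. Since that form is completely determined by $n$, $g'$, $d$ and the Nielsen data $(\ul r,\nu_1,\nu_2,\e)$, producing it for every such $v$ at once also proves the implicit claim that all $v$ with equal invariants are $\approx$-equivalent, in agreement with $H_{2,\Sigma}(D_n)=\{0\}$ (Corollary \ref{H2GammaD}). I would first record the elementary $D_n$-arithmetic that drives every move: conjugation by $x^j$ sends $x^ay\mapsto x^{a+2j}y$ and fixes rotation classes $\{x^r,x^{-r}\}$; conjugation by $y$ inverts rotations and sends $x^ay\mapsto x^{-a}y$; the braid on two adjacent reflections acts by $(x^ay,x^by)\mapsto(x^by,x^{2b-a}y)$, preserving the product $x^{a-b}$; and passing a rotation across a reflection inverts it while altering the reflection exponent by an even amount. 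In particular every available move changes reflection exponents only by even amounts within a fixed conjugacy class, so for $n$ even the counts $\nu_y,\nu_{xy}$ are genuine invariants. Using the description of $Aut(D_n)$ from \cite{CLP11} I may assume that the reflection classes actually occurring are that of $y$ and, when $n$ is even and both appear, that of $xy$.

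Next I would normalize the handle part. Because a product of commutators in $D_n$ is always a rotation and a reflection is present among the $c_i$, I would invoke the handle moves of Appendix A — e.g. Proposition \ref{auts extend} and the transformation \eqref{map2}, used exactly as in the proof of Proposition \ref{etale} — to slide local monodromies around the handles and thereby trivialize the class that a priori lives in $H_2$. Since $H_{2,\Sigma}(D_n)=\{0\}$ in the present case, nothing survives to be carried by the handles, and they can be reduced to $(a_1,b_1,\dots,a_{g'},b_{g'})=(x,1,1,\dots,1)$, the single surviving $x$ being retained only to guarantee that the rotation subgroup $\sR=\langle x\rangle$ is generated (so that condition (ii) of a Hurwitz generating system holds irrespective of $\ul r$). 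The rotations occurring among the $c_i$ are then put into the form $x^{r_i}$ with $0<r_i\le \tfrac n2$ by conjugating each $x^r\sim x^{-r}$, sorted by braid moves and collected to the front as $x^{\ul r}$.

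The real work lies in reducing the reflections among the $c_i$. Using the braid formula $(x^ay,x^by)\mapsto(x^by,x^{2b-a}y)$ (which shifts both exponents by the invariant difference $b-a$), the ability to push a rotation across a reflection, and global conjugation by $x^j$ (an even shift of all reflection exponents), I would show that within each class all but one reflection can be made equal to $y$, respectively to $xy$, and that the entire leftover rotation can be concentrated into a single distinguished reflection. This yields the blocks $x^{\,c}y,xy,\dots,xy$ (of size $\nu_2$) and $y,\dots,y$ (of size $\nu_1$). The exponent $c$ of the distinguished reflection is then pinned down by imposing $ev(v)=1$: multiplying out the normal form with $(xy)^2=y^2=1$ and using that the number of reflections is even gives the stated value $c\equiv 1-|\ul r|$ for $n$ odd, and $c\equiv \e-|\ul r|$ for $n$ even; the requirement that this distinguished reflection lie in the correct class together with admissibility (the vanishing of the $[x]$-component in $D_n^{ab}$, which forces $|\ul r|+\nu_2\equiv 0$) yields precisely the parity constraint $\e+\nu_2\equiv 1\pmod 2$. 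That each step preserves $\e(v)$, hence is legitimate, is Lemma \ref{braid-invar} together with Corollary \ref{Schur cover}.

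The main obstacle I anticipate is the simultaneous control of the reflection reduction and the class bookkeeping when $n$ is even. Two reflections alone give only the shifts in $\langle b-a,2\rangle$, which need not contain the target; I must therefore verify that the additional freedom coming from the rotations among the $c_i$, from the handle-rotation $x$, and from a third reflection in the same class suffices to concentrate all the rotation into one reflection while respecting the fixed counts $\nu_y,\nu_{xy}$ — here the generation hypothesis, which guarantees that the rotations present generate $\sR$, is what makes enough exponents available. A secondary, purely technical difficulty is confirming that the handle-trivialization moves invoked are genuinely realized inside $Map(g',d)$ rather than merely preserving the surface relation abstractly; this is the purpose of the Appendix A lemmas, which I would cite at each application.
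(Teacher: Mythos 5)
Your overall architecture coincides with the paper's: first reduce the handle part to $(x,1,\dots,1)$, exploiting the presence of a reflection among the $c_i$ to convert any reflection handle-entries into rotations via the $\xi$-twists of Proposition \ref{xi-twists eqs} and then invoking the \'etale $\ZZ/n\ZZ$ theory of \cite{cyclic} (this is the paper's Lemma \ref{v'}, including the adjustment making the surviving exponent coprime to $n$); then normalize the reflection block; then pin down the distinguished exponent from $ev(v)=1$ and admissibility. The first and last stages of your plan are essentially sound.

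The gap sits exactly where you place your ``main obstacle'', and you do not close it. The moves you list for the reflection block --- braids among reflections $(x^ay,x^by)\mapsto(x^by,x^{2b-a}y)$, passing the rotations $x^{r_i}$ across reflections, and global conjugation by $x^j$ --- are not shown (and are not obviously able) to reach the normal form: if $R=0$ there are no rotations to pass across, global conjugation shifts all exponents simultaneously, and the orbit structure of the pure braid action on reflection tuples is precisely the nontrivial content of \cite{CLP11}, not something one may assert transits freely within a fixed $\nu$-type. The paper supplies a concrete additional mechanism, the equivalence \eqref{eq**}: after using Lemma 2.1 of \cite{CLP11} to bring the reflections into the form $(x^\beta y, x^\alpha y, x^{j_{N-1}}y, x^{j_{N-1}}y,\dots,x^{j_1}y,x^{j_1}y)$ with all but two of them paired, one conjugates each adjacent equal pair $(x^{j_k}y,x^{j_k}y)$ simultaneously by an arbitrary power of $x$ --- shifting both exponents by the same arbitrary even amount, independently pair by pair and without disturbing any other entry --- by sandwiching a braid move between two $\xi$-twists that borrow and then restore the handle entry $x$. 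This is precisely the step that ``concentrates the leftover rotation into a single distinguished reflection'' while respecting $\nu_y,\nu_{xy}$, and it is the step your proposal defers to verification rather than proving. Without it (or an explicit appeal to the full $g'=0$ normal-form theorem of \cite{CLP11} applied after Lemma \ref{v'}), the reduction of the reflection block does not go through.
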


The idea of the proof is the following. Using the action of the unpermuted mapping class group $Map^u(g',d+1)$ and the fact that at least one $c_i$ is a reflection, we 
prove that $v\sim (\tilde{c}_1 , \, \dots \, , \tilde{c}_d ; \tilde{a}_1, \tilde{b}_1, \, \dots \, , \tilde{a}_{g'}, \tilde{b}_{g'})$, with $\tilde{a}_i , \tilde{b}_i \in \ZZ/n\ZZ$, for any $i$.
We collect in the Appendix the relevant facts that will be used about the action of $Map^u(g',d+1)$ on the fundamental group.     
Then, using results about \'etale $\ZZ/n\ZZ$-covers, we deduce that  $v\approx v':=(c_1' , \, \dots \, , c_d' ; x, 1, \, \dots \, , 1)$ (Lemma \ref{v'}).
At this point we can apply the main theorem of \cite{CLP11} to deduce that, acting with the braid group,
it is possible to transform  $v'$ to the corresponding normal form. However, we will see that using the entry $x$ in $v'$, the results in the Appendix
and Lemma 2.1 of \cite{CLP11}, we can  transform directly  $v'$ in one of the above forms without using the normal 
forms for the $g'=0$ case.

\begin{lemma}\label{v'}
Let $v$ be as in Proposition \ref{general}. Then
$$
v\approx v':=(c_1' ,\,  \dots \,  , c_d' \, ; \,  x, 1, \dots , 1, 1)  \, .
$$
\end{lemma}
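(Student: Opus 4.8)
The plan is to prove Lemma~\ref{v'} in two stages, following the outline sketched after Proposition~\ref{general}: first make all the handle entries $a_i,b_i$ into rotations, and then normalize the resulting rotation tuple to $(x,1,\dots,1)$ by reducing to an \'etale $\ZZ/n\ZZ$-cover problem. Throughout I use that $v_1\sim v_2$ implies $v_1\approx v_2$, so it suffices to exhibit suitable moves of $Map^u(g',d+1)$ together with one application of \'etale-cover irreducibility.

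\textbf{Stage 1.} I would compose the monodromy with the sign homomorphism $D_n\to \ZZ/2\ZZ$ sending rotations to $0$ and reflections to $1$, and set $\ol{v}=v\,(\mathrm{mod}\ \ZZ/n\ZZ)$. Since by hypothesis at least one $c_i$ is a reflection, $\ol{v}\in HS(\ZZ/2\ZZ;g',d)$ has a nonzero branch entry. The moves of $Map^u(g',d+1)$ that drag a branch point around a handle loop, which I take from the Appendix, act on $\ol{v}$ by the elementary flips $\ol{a}_i\mapsto \ol{a}_i+1$ and $\ol{b}_i\mapsto \ol{b}_i+1$, using the branch point whose class is $1$. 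Applying these to every nonzero $\ol{a}_i,\ol{b}_i$ reduces to $\ol{a}_i=\ol{b}_i=0$ for all $i$. Lifting the very same moves to $D_n$, each one multiplies the affected handle entry by a reflection and hence turns a reflection into a rotation (reflection times reflection is a rotation); so after finitely many of them $v\sim(\tilde{c}_1,\dots,\tilde{c}_d;\tilde{a}_1,\tilde{b}_1,\dots,\tilde{a}_{g'},\tilde{b}_{g'})$ with every $\tilde{a}_i,\tilde{b}_i\in\langle x\rangle\cong\ZZ/n\ZZ$.

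\textbf{Stage 2.} Now all handle entries are rotations, so every commutator $[\tilde{a}_i,\tilde{b}_i]$ is trivial and the relation $ev(v)=1$ reduces to $\prod_j\tilde{c}_j=1$. The handle tuple $(\tilde{a}_1,\dots,\tilde{b}_{g'})$ is thus an \'etale Hurwitz system for the cyclic subgroup $H'=\langle \tilde{a}_i,\tilde{b}_i\rangle\le\langle x\rangle$. Using the transfer moves of the Appendix, which push a rotation from the branch locus into a handle without altering the conjugacy classes of the $\tilde{c}_j$ (so that each stays nontrivial and $d$ is preserved), together with the fact that $v$ generates $D_n$ — whence the rotations available in the branch locus and the $\tilde{a}_i,\tilde{b}_i$ generate the full rotation subgroup $\langle x\rangle$ — I arrange that the handle entries generate all of $\langle x\rangle$. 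Then $(\tilde{a}_1,\dots,\tilde{b}_{g'})\in HS(\ZZ/n\ZZ;g')$, and by the irreducibility of the moduli space of \'etale $\ZZ/n\ZZ$-covers of a fixed genus-$g'$ curve (as in \cite{DM} Lemma~5.16, or \cite{cyclic} Thm.~2.4) there is $\varphi'\in Map_{g'}$ carrying this tuple to $(x,1,\dots,1)$. By Proposition~\ref{auts extend} the mapping class $\varphi'$ extends to a mapping class of the $d$-punctured genus-$g'$ surface, and applying it yields $v\approx (c_1',\dots,c_d';x,1,\dots,1)=v'$, as required.

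The main obstacle is the transfer step in Stage 2. One must enlarge the subgroup generated by the handle entries up to all of $\langle x\rangle$ while (i) keeping each $\tilde{c}_j$ nontrivial and inside its conjugacy class, since the $\nu$-type is a fixed invariant, and (ii) using only moves that are genuinely realized by the mapping class group. The tension is that dragging a \emph{reflection} branch point across a handle would turn that handle entry back into a reflection, so only rotation contributions may be loaded onto the handles; accessing enough rotations (including those of the form $\tilde{c}_j\tilde{c}_k$ coming from pairs of reflections) is exactly where the careful catalogue of moves in the Appendix and Lemma~2.1 of \cite{CLP11} are needed, and it is the reason the hypothesis that some $c_i$ is a reflection is used. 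The remaining steps are routine once these moves are available.
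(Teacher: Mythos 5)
Your Stage 1 is exactly the paper's argument: braid a reflection into the last branch slot and use the $\xi$-twists of Proposition \ref{xi-twists eqs} (i) and (ii) to multiply each reflection handle entry by a conjugate of that reflection, turning every $a_i,b_i$ into a rotation while the last branch entry stays a reflection. No issue there.

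The gap is in Stage 2, and you have named it yourself: the step ``arrange that the handle entries generate all of $\langle x\rangle$'' is asserted, declared to be the main obstacle, and then not carried out, so the proof is incomplete at its one non-routine point. The step is in fact completable: for the first handle slot the twist of Proposition \ref{xi-twists eqs} (i) has $u=1$, so combined with braid moves it lets you left-multiply $\tilde a_1$ by an arbitrary element of $\langle \tilde c_1,\dots,\tilde c_d\rangle$; the rotations of that subgroup together with the $\tilde a_i,\tilde b_i$ generate $\langle x\rangle$ because $v$ generates $D_n$ and all handle entries are already rotations, and loading a product of two reflections merely passes through a harmless intermediate state where $\tilde a_1$ is a reflection. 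But the paper sidesteps all of this by reversing your two operations: it first applies the cyclic-cover theorem of \cite{cyclic} to the handle tuple inside whatever cyclic subgroup $\langle x^\alpha\rangle$ it happens to generate, reaching $(\tilde c_1,\dots,\tilde c_d;x^\alpha,1,\dots,1)$ with possibly $\gcd(\alpha,n)>1$, and only then performs a single transfer, multiplying $x^\alpha$ by one element $x^\beta\in\langle\tilde c_1,\dots,\tilde c_d\rangle$ chosen so that $x^{\alpha+\beta}$ generates $\ZZ/n\ZZ$, finishing with an automorphism of $D_n$. Normalizing before transferring reduces the delicate bookkeeping to one multiplication in one slot; either adopt that order or write out the generation argument you have only sketched.
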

\proof 
Without loss of generality assume that $c_d$ is a reflection (otherwise act with the braid group).
Then,  if $a_1$ is a reflection, by Proposition \ref{xi-twists eqs} (i), $\exists \varphi \in Map^u(g',d+1)$ such that
$$
\varphi \cdot v = (c_1, \, \dots \, , c_{d-1}, (c_da_1b_1a_1^{-1})c_d(c_da_1b_1a_1^{-1})^{-1} ; c_da_1, b_1, \, \dots \, , a_{g'}, b_{g'}) \, .
$$
While, if $a_1$ is a rotation and $b_1$ is a reflection, by Proposition \ref{xi-twists eqs} (ii) we have: 
$\exists \varphi \in Map^u(g',d+1)$ such that
$$
\varphi \cdot v  =   (c_1,  \dots  , (c_d[a_1,b_1]a_1^{-1})c_d(c_d[a_1,b_1]a_1^{-1})^{-1} ; a_1, (a_1^{-1}c_da_1)b_1,  \dots  , b_{g'})  .
$$
Notice that in both cases the $d$-th entry of $\varphi \cdot v$ is a reflection and that $c_da_1, (a_1^{-1}c_da_1)b_1\in \ZZ/n\ZZ$.
Proceeding in this way we get $\psi \in Map^u(g',d+1)$ such that $(\psi \cdot v)_i \in \ZZ/n\ZZ$, $i=d+1, \, \dots \, , 2g'$. 

Next, by the main theorem in  \cite{cyclic}, we conclude that $\psi\cdot v \approx (\tilde{c}_1, \,  \dots \,  , \tilde{c}_{d};x^\alpha, 1,\,  \dots \,  ,1 )$.
We can further assume   $({\alpha},n)=1$.  Otherwise, since $D_n=\langle x^\alpha , {\tilde{c}}_1,  \dots , {\tilde{c}}_{d} \rangle$,
there exists $x^\beta \in \langle {\tilde{c}}_1,  \dots , {\tilde{c}}_{d} \rangle$ such that $\ZZ/n\ZZ=\langle x^{\alpha + \beta}\rangle$.
Using Proposition \ref{xi-twists eqs} (i) and the braid group, we can multiply $x^\alpha$ by any element of  $ \langle {\tilde{c}}_1,  \dots , {\tilde{c}}_{d} \rangle$.
The claim now  follows by applying $Aut (D_n)$. \qed 

\medskip

We now complete the proof of Proposition \ref{general}. Let $v'$ be as in Lemma \ref{v'} and let  $2N$ be the number of reflections in $\{ c_1', \dots , c_d' \}$. Applying Lemma 2.1 of \cite{CLP11} we have
\begin{equation}\label{v'1}
{v}' \approx (x^{\ul{r}}, x^{\beta}y, x^{\alpha}y, x^{j_{N-1}}y, x^{j_{N-1}}y, \dots , x^{j_1}y, x^{j_1}y \, ; \,  x, 1, \dots , 1) \, ,
\end{equation}
where $\ul{r}=(r_1, \dots , r_R)$, $0<r_i\leq r_{i+1}\leq \frac{n}{2}$, $x^{\ul{r}}=(x^{r_1}, \dots , x^{r_R})$.

If $N=1$ the result is clear. Otherwise we  conjugate by $x$ simultaneously  the entries of each pair    $( x^{j_{k}}y, x^{j_{k}}y)$ in \eqref{v'1} 
without changing the other components, hence we obtain:
\begin{align}\label{eq**}
 {v}'  \quad  \sim  \quad
(x^{\ul{r}}, x^{\beta}y, x^{\alpha}y,& x^{j_{N-1}+2\ell_{N-1}}y, x^{j_{N-1}+2\ell_{N-1}}y, \dots \\
& \dots, x^{j_1+2\ell_{1}}y, x^{j_1+2\ell_{1}}y\, ; \,  x,  1, \dots , 1) \nonumber
\end{align}
for any $\ell_1 , \dots , \ell_{N-1}\in \ZZ$.  \\
The equivalence \eqref{eq**} can be proven as follows. We have:
\begin{align*}
 {v}' & \sim ({x^{\ul{r}}, x^{\beta}y,  \dots , x^{j_1}y, (x^{j_1}y x^{-1})x^{j_1}y(x^{j_1}y x^{-1})^{-1}}\, ;\, x, x^{-1} x^{j_1}y x, 1, \dots , 1)\\
 & \sim ({x^{\ul{r}}, x^{\beta}y,  \dots , ( x^{-1})x^{j_1}y( x) , x^{j_1}y}\, ; \,  x, x^{-1} x^{j_1}y x, 1, \dots , 1)\\
& \sim ({x^{\ul{r}}, x^{\beta}y,  \dots , ( x^{-1})x^{j_1}y( x) , (x^{-1})x^{j_1}y(x)}\, ; \,  x,  1, \dots , 1)\, ,
\end{align*}
where the first and the third equivalences are given by $\xi$-twists as in Proposition \ref{xi-twists eqs} (ii), while the second is a braid twist between the last two components.
Iterating these steps we  can conjugate by any power of $x$   the entries of  $( x^{j_{1}}y, x^{j_{1}}y)$ simultaneously.
By Lemma 2.3 in \cite{CLP11} we can move $( x^{j_{k}}y, x^{j_{k}}y)$ to the right and then conjugate its entries 
by any power of  $x$ as before. This proves \eqref{eq**}. 

If $n$ is odd, choose $\ell_i$ in \eqref{eq**} such that $j_i +2\ell_i= \alpha -1$ (mod $n$), then 
apply the automorphism $x^{\alpha -1}y \mapsto y, x\mapsto x$ to obtain  (i). 

Assume now that $n$ is even. Without loss of generality we have that $\nu_1=\nu_y\leq \nu_{xy}=\nu_2$
(otherwise apply $Aut(D_n)$).  Assume further that $x^{\beta}y$ 
in \eqref{v'1} is conjugate to $xy$.  \\
If  $x^{\alpha}y$ is conjugate to $xy$,  choose $\ell_i$  such that $j_i+2\ell_i = \alpha$ or $j_i+2\ell_i = \alpha -1$ (mod $n$),
so \eqref{eq**} becomes:
$$
{v}' \sim ({x^{\ul{r}}, x^{\beta}y, x^{\alpha}y, x^{\alpha}y, \dots , x^{\alpha}y,  x^{\alpha - 1 }y, \dots ,  x^{\alpha -1}y}\, ; \, x, 1, \dots , 1) \, .
$$
We obtain the normal form (ii) after applying the automorphism $x^\a y \mapsto xy$, $x\mapsto x$.\\
The remaining case, where $x^{\alpha}y$ is conjugate to $y$, is similar.
  \qed
  
  \medskip

Notice that \eqref{eq**} follows also from  Lemma 2.1 in \cite{Ka2} (see also \cite{GHS}), which applies to a more general situation.
Since we don't need the whole strength of that result, we preferred to give a complete proof in our case.

\noindent \textbf{Case 3: $\Sigma \not=\emptyset$ and does not contain reflections.}

\medskip

We prove the following 

\begin{prop}
Let $v, v' \in HS(D_n;g', d)$ with $\Sigma_v , \Sigma_{v'} \subset \ZZ/n\ZZ$.
Then $v\approx v'$ if and only if there exists $f\in Aut(D_n)$ such that $f(\Sigma_v)=\Sigma_{v'}$
and $f(\e(v))=\e (v')$.
\end{prop}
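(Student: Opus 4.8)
The plan is to treat the two implications separately. The forward implication is simply the well-definedness of $\hat{\e}$: if $v\approx v'$ then, by the $Aut(D_n)$-equivariance of $\e$ (Lemma \ref{aut-inv}) and the $Map(g',d)$-invariance of $\tilde{\e}$ (Proposition \ref{e-inv}), the classes of $v$ and $v'$ have the same image under $\hat{\e}$ in $(G^\cup)/_{Aut(D_n)}$. Unwinding this equality produces $f\in Aut(D_n)$ with $f(\Sigma_v)=\Sigma_{v'}$ and $f_\Sigma(\e(v))=\e(v')$, which (writing $f$ for $f_\Sigma$ as in the statement) is exactly the desired condition.

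For the reverse implication, suppose such an $f$ is given. Replacing $v$ by $f(v)$ and using Lemma \ref{aut-inv} — so that $\Sigma_{f(v)}=\Sigma_{v'}$, $\e(f(v))=\e(v')$ and $f(v)\approx v$ — I may assume from the outset that $\Sigma_v=\Sigma_{v'}=:\Sigma\subset\langle x\rangle$ and $\e(v)=\e(v')$. By Remark \ref{evsnu} the image of $\e(v)$ in $G_\Sigma^{ab}$ is the Nielsen function, so this forces $\nu(v)=\nu(v')$; the remaining information in $\e(v)=\e(v')$ is the vanishing of their difference in $H_{2,\Sigma}(D_n)$. It therefore suffices to prove that $\e(v)$ — equivalently the pair formed by the $\nu$-type and the class in $H_{2,\Sigma}(D_n)$ — determines the $\approx$-class, which is precisely the normal-form analysis announced for Case 3.

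The normal form is obtained in parallel with Lemma \ref{v'} and Proposition \ref{general}, but exploiting a reflection among the \emph{hyperbolic} generators rather than among the $c_i$. Since all $c_i$ are rotations while the components of $v$ generate $D_n$, at least one of $a_1,b_1,\dots,a_{g'},b_{g'}$ is a reflection (note this forces $g'\geq 1$, so the rotation-only subcase of $g'=0$ is vacuous). Using the $\xi$-twists of Proposition \ref{xi-twists eqs} together with braid moves I move a reflection into a standard slot and, invoking the irreducibility of the moduli of \'etale $\ZZ/n\ZZ$-covers as in \cite{cyclic}, clear the other hyperbolic entries, reducing the hyperbolic part to $(x^{\ell}y,x^{m}y,x,1,\dots,1)$ when $g'\geq 2$ and to $(x^{\ell}y,x^{m}y)$ when $g'=1$; the same moves together with $Aut(D_n)$ put the rotation block in the form $(x^{r_1},\dots,x^{r_R})$ with $0<r_i\leq r_{i+1}\leq\frac{n}{2}$, a datum fixed by $\nu$. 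Since a direct computation gives $[x^{\ell}y,x^{m}y]=x^{2(\ell-m)}$, the relation $ev(v)=1$ reads $2(\ell-m)\equiv-\sum_i r_i\pmod n$.

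The main obstacle is the residual $\ZZ/2$-ambiguity in $\ell-m$, and it is here that Section 4 enters. If $n$ is odd, or $n$ is even and $\Sigma\ni x^{n/2}$, then $H_{2,\Sigma}(D_n)=0$ by Corollary \ref{H2GammaD}: the congruence for $\ell-m$ has an essentially unique relevant solution, the normal form depends only on $\nu$, and $\nu(v)=\nu(v')$ already gives $v\approx v'$. If $n$ is even and $\Sigma\subset\langle x\rangle\setminus\{x^{n/2}\}$, then $H_{2,\Sigma}(D_n)=\ZZ/2$: now $\ell-m$ admits two solutions modulo $n$ differing by $\frac{n}{2}$, and I must check by an explicit evaluation in $G_\Sigma$, in the spirit of Corollary \ref{Schur cover} and Proposition \ref{etale}, that the two resulting normal forms are separated precisely by their class in $H_{2,\Sigma}(D_n)$, hence by $\e$. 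Granting this, the hypothesis $\e(v)=\e(v')$ selects the same normal form for $v$ and $v'$, so both are $\approx$-equivalent to it and therefore to one another. The subtle points to verify are that the mapping-class moves genuinely realize all the identifications required to reach the normal form while respecting the generation condition — tightest for $g'=1$, where it is controlled by the index of $\langle x^{r_i}\rangle$ in the rotation subgroup — and the exact pairing of the two values of $\ell-m$ with the two elements of $H_{2,\Sigma}(D_n)$.
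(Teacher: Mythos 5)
Your overall architecture coincides with the paper's: the forward implication is just the invariance of $\hat{\e}$, and the reverse implication is reduced, after applying $f$, to showing that $\Sigma$ together with $\e$ (equivalently, the $\nu$-type plus the class in $H_{2,\Sigma}(D_n)$) determines the $\approx$-class, attacked by a normal-form analysis split according to whether $H_{2,\Sigma}(D_n)$ vanishes. But the proposal stops short of the actual content. The paper's proof consists of three lemmas that carry out the reduction explicitly: it passes to the quotient $G'=D_n/H$ with $H=\langle c_1,\dots,c_d\rangle\subset\ZZ/n\ZZ$ normal, applies the known classification of \emph{\'etale} systems for $G'$ (dihedral, cyclic or Klein), lifts back via Proposition \ref{auts extend}, and then uses the $\xi$-twists of Proposition \ref{xi-twists eqs} to absorb the remaining $H$-ambiguities, arriving at the normal form $(x^{\ul{r}};y,x^{h},x,1,\dots,1)$. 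You instead propose to manipulate $v$ directly toward $(x^{\ul{r}};x^{\ell}y,x^{m}y,x,1,\dots,1)$, and you explicitly defer the verification that the mapping-class moves realize all the needed identifications; since that verification \emph{is} the proof, this is a genuine gap rather than a stylistic difference.

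Moreover, in the cases where $H_{2,\Sigma}(D_n)=0$ your logic runs backwards. When $n=2k$ and $x^{k}\in\Sigma$, the congruence $2(\ell-m)\equiv-\sum_i r_i \pmod n$ has \emph{two} solutions differing by $k$, and the vanishing of $H_{2,\Sigma}(D_n)$ only tells you that $\e$ cannot distinguish the two resulting candidate normal forms; it does not produce the mapping-class-group element identifying them. That identification must be exhibited: the paper does it by a $\xi$-twist against the entry $c_d=x^{k}$, using that $x^{k}$ is central, so the twist shifts $x^{\mu_1}$ to $x^{\mu_1+k}$ without altering $c_d$. Without this step (and its counterpart, Lemma \ref{bad case}, showing that when $x^{k}\notin\Sigma$ the two forms with $h$ and $h+k$ exhaust the possibilities and are separated by $\e$), the assertion that ``the normal form depends only on $\nu$'' is unsupported. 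The $g'=1$ generation issue you flag is likewise left open in your sketch.
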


The ``only if" part is clear. So assume $\Sigma_v = \Sigma_{v'}=:\Sigma$ and the existence of $f$ as in the statement. 
We prove that $v\approx v'$. This is achieved after considering three cases: $n$ is odd; $n=2k$ and $x^k\in \Sigma$,
$n=2k$ and $x^k\not\in \Sigma$. In the first two cases we determine a normal form, with respect to $\approx$,
 for each such element of $HS(D_n;g',d)$,
and then we show that two such elements are equivalent if and only if they have the same normal form. Notice that in both cases
$H_{2,\Sigma}(D_n)=\{ 0\}$ (Corollary \ref{H2GammaD}).
In the last case, for any such $v\in HS(D_n;g',d)$, we determine a normal form $v'$, with respect to the action of $Map(g',d)$ and then we show that
$v_1\approx v_2$ if and only if $\exists f\in Aut(D_n)$ such that $f(\Sigma)=\Sigma$ and $f(\e(v_1'))=\e (v_2')$.
Notice that, in this case $H_{2,\Sigma}(D_n)\cong \ZZ/2\ZZ$ (Corollary \ref{H2GammaD}).

\medskip

\noindent \textbf{Notation.} Let $v=(c_1, \dots , c_d \, ; \, a_1,b_1, \dots , a_{g'},b_{g'}) \in D_n^{d+2g'}$ be a Hurwitz generating system
such that $\Sigma_v \subset \ZZ/n\ZZ$, i.e. $c_i \in \ZZ/n\ZZ$, $\forall i$.
We denote by $H=\langle c_1, \dots , c_d \rangle \subset D_n$ the subgroup generated by the $c_i$'s.
Note that, under the above hypotheses,  $H$ is normal and it is contained in $\ZZ/n\ZZ$.
Set $G':=D_n/H$. Then $G'$ is a dihedral group $D_m$, $m\geq 3$, or is isomorphic to $\ZZ/2\ZZ \times \ZZ/2\ZZ$,
or to $\ZZ/2\ZZ$.

\begin{lemma}
Let $n\in \NN$, $n\geq 3$ odd. Let $v\in HS(D_n;g',d)$ with $\Sigma_v \subset \ZZ/n\ZZ$.
Then 
$$
v\approx (x^{\ul{r}};y,x^{h},x, 1 , \dots , 1) \, ,
$$ 
where $\ul{r}=(r_1, \dots , r_d)$, $x^{\ul{r}}=(x^{r_1}, \dots , x^{r_d})$,
$r_1 \leq \,  \dots \,  \leq r_d <\frac{n}{2}$ and  $2h= \sum_1^d r_i, \, (\!\!\!\! \mod n)$.
\end{lemma}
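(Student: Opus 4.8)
The plan is to adapt the reductions of Proposition \ref{etale} and Lemma \ref{v'} to the present situation, in which all $c_i$ are rotations and, by Corollary \ref{H2GammaD}, $H_{2,\Sigma}(D_n)=\{0\}$, so that no homological obstruction can distinguish systems sharing the same numerical data. First I would record that $\langle c_1,\dots,c_d\rangle\subset\ZZ/n\ZZ$ forces at least one handle generator to be a reflection (otherwise $v$ would generate only rotations), so $g'\geq 1$. Reducing $v$ modulo $\ZZ/n\ZZ$ yields $\bar v\in HS(\ZZ/2\ZZ;g')$ whose branch entries are all trivial; hence the $\ZZ/2\ZZ$-structure is carried entirely by the handle. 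By the irreducibility of the parameter space of \'etale $\ZZ/2\ZZ$-covers of genus $g'$ (the very ingredient used in Proposition \ref{etale}), together with the lifting statement Proposition \ref{auts extend}, I can choose a \emph{pure-handle} mapping class carrying $\bar v$ to $(0,\dots,0;1,0,\dots,0)$ \emph{without moving the branch points}, hence without disturbing the $c_i$. Thus $v\approx (c_1,\dots,c_d;\,x^{\ell_1}y,\,x^{m_1},\,x^{p_2},x^{q_2},\dots,x^{p_{g'}},x^{q_{g'}})$, with $a_1=x^{\ell_1}y$ the unique reflection and every other handle entry a rotation.

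Next I would exploit the evaluation relation and reduce the genus. Since rotations commute, $\prod_{i\geq 2}[a_i,b_i]=1$ while $[a_1,b_1]=[x^{\ell_1}y,x^{m_1}]=x^{-2m_1}$, so $ev(v)=1$ forces $2m_1\equiv\sum_j r_j\pmod n$ (where $x^{r_j}$ are the sign-normalised branch rotations); as $n$ is odd this determines $m_1=:h$ uniquely, and we set $b_1=x^h$. The remaining block $(a_2,b_2,\dots,a_{g'},b_{g'})\in(\ZZ/n\ZZ)^{2(g'-1)}$ is an \'etale $\ZZ/n\ZZ$-system of genus $g'-1$ with trivial product, so by the irreducibility theorem for \'etale $\ZZ/n\ZZ$-covers (\cite{cyclic}, Thm.\ 2.4) and again Proposition \ref{auts extend} it can be brought to $(x^t,1,\dots,1)$ with $\langle x^t\rangle=\langle a_2,\dots,b_{g'}\rangle$. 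Using the $\xi$-twists of Proposition \ref{xi-twists eqs}(i) and braid moves exactly as at the close of Lemma \ref{v'}, I can multiply $a_2=x^t$ by any element of the subgroup generated by the other rotation components; since $v$ generates $D_n$, and hence all of $\ZZ/n\ZZ$, a standard coprimality (CRT) adjustment produces a generator of $\ZZ/n\ZZ$ in position $a_2$, the entries $b_2,\dots$ remaining trivial.

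Finally I would normalise the branch entries and clean up by $Aut(D_n)$. Each rotation class is $\{x^s,x^{-s}\}$, and since conjugation by the reflection $a_1$ sends $x^r\mapsto x^{-r}$, a $\xi$-twist through $a_1$ (Proposition \ref{xi-twists eqs}) flips the sign of a single $c_i$ at the cost of a modification of $b_1$ that is reabsorbed when $b_1$ is re-fixed by the evaluation relation; thus every $c_i$ can be represented by $x^{r_i}$ with $0<r_i<\tfrac n2$, and braid moves (which merely permute commuting rotations) reorder them so that $r_1\leq\dots\leq r_d$. A single automorphism $x\mapsto x^a,\ y\mapsto x^b y$ of $D_n$, with $a$ chosen to send the generator in position $a_2$ to $x$ and $b$ chosen to send $x^{\ell_1}y$ to $y$, then puts $v$ into the asserted form $(x^{\ul{r}};y,x^h,x,1,\dots,1)$, after which one re-reads $h$ and re-sorts the exponents according to the rescaling, so that $2h\equiv\sum r_i\pmod n$.

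The main obstacle I anticipate is the genus-reduction step coupled with preserving generation: one must simultaneously trivialise the superfluous handles through the two irreducibility theorems, respect the commutator relation that pins down $b_1$, and still guarantee that one handle generator can be upgraded to a generator $x$ of the entire rotation subgroup. Orchestrating the pure-handle mapping classes (via Proposition \ref{auts extend}) so that they never move the branch data, and carrying out the coprimality correction by $\xi$-twists and braid moves as in Lemma \ref{v'}, is where the care lies. The degenerate case $g'=1$, in which no spare handle is available to become the entry $x$, must be treated separately, since then generation of $\ZZ/n\ZZ$ genuinely constrains the admissible $\nu$-types (compare Theorem \ref{Dn-case}(iii)).
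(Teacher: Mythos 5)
Your argument is correct and it ends up using the same toolkit as the paper's proof ($\xi$-twists via Proposition \ref{xi-twists eqs}, the extension Proposition \ref{auts extend}, braid moves, sign normalization of the $c_i$ through the reflection $a_1$, and a final automorphism of $D_n$), but you organize the quotient step differently. The paper sets $H=\langle c_1,\dots,c_d\rangle$ and reduces the handle part of $v$ modulo $H$, so that $\ol{v}$ is an \'etale system for $G'=D_n/H$ (a dihedral group, or $\ZZ/2\ZZ\times\ZZ/2\ZZ$, or $\ZZ/2\ZZ$); the \'etale normal form $(\ol{y},1,\ol{x},1,\dots,1)$ for $G'$ then delivers a third handle entry congruent to $x$ modulo $H$, which is corrected to $x$ exactly by multiplying by elements of $H$ via $\xi$-twists, so no separate generation argument is needed. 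You instead reduce modulo the full rotation subgroup, which isolates the unique reflection handle using only the $\ZZ/2\ZZ$ \'etale result, and must then normalize the remaining rotation handles as an \'etale cyclic system of genus $g'-1$ and upgrade $a_2=x^t$ to a generator of $\ZZ/n\ZZ$ by an explicit coprimality adjustment (multiplying $a_2$ by elements of $H$), which is precisely the step the paper carries out in Lemma \ref{v'} for the reflection case. The two routes are logically equivalent: yours trades the paper's appeal to the \'etale normal form for the quotient group (and its unnamed cyclic and Klein-group analogues) for one extra CRT step, and your remark that the evaluation relation pins down $b_1=x^h$ uniquely because $2$ is invertible modulo the odd $n$ is exactly the point the paper uses implicitly. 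Your flagging of the case $g'=1$, where there is no spare handle to carry the entry $x$ and generation forces $H=\ZZ/n\ZZ$, identifies a genuine edge case that the statement and the paper's proof both gloss over.
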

\proof
Let us consider 
$$
\ol{v} := (\ol{a_1}, \ol{b_1} , \dots , \ol{a_{g'}}, \ol{b_{g'}})   \in HS(G';g',0) \, ,
$$
where  $\ol{a_i}= a_i$ (mod $H$), $\ol{b_i}= b_i$ (mod $H$). By Proposition \ref{etale} and by the analogous results
for cyclic and ($\ZZ/2\ZZ\times \ZZ/2\ZZ$)-covers we have:
$$
\exists \varphi \in Map_{g'} \quad \mbox{such that } \quad \varphi \cdot \ol{v} =(\ol{y}, 1, \ol{x}, 1, \dots , 1) \, .
$$
By Proposition \ref{auts extend}, $\exists \tilde{\varphi} \in Map(g',d)$ with  
$$
\tilde{\varphi} \cdot v = (c_1, \dots , c_d \, ; \, x^{\ell_1}y, x^{m_1}, x^{\ell_2}, \dots , x^{m_{g'}}) \, ,
$$
 where 
$x^{m_i}\in H$, $\forall i$,  $x^{\ell_2} = x$ (mod $H$) and $x^{\ell_i} \in H$, $\forall i>2$. 

We now apply the $\xi$-twists as in  Proposition \ref{xi-twists eqs} (i)  with $\ell =2,3, \dots , g'$
and we deduce that we can multiply all the $x^{\ell_i}$, $i>1$, by any element of $H$. Hence $\exists \psi \in Map^u(g',d+1)$ such that
$$
\psi \cdot \tilde{\varphi} \cdot v= (c_1, \dots , c_d \, ; \, x^{\ell_1}y, x^{m_1}, x, x^{m_2}, 1, x^{m_3},  \dots ,1, x^{m_{g'}})\,  . 
$$
Similarly, using Proposition \ref{xi-twists eqs} (ii), we get: 
$$
v \approx (c_1, \dots , c_d \, ; \, x^{\ell_1}y, x^{m_1}, x,  1,   \dots ,1, 1)\, .
$$

Now, for any $i=1, \dots , d$, consider $c_i=x^{s_i}$. If $s_i<\frac{n}{2}$, set $r_i=s_i$, otherwise use the braid group
to move $c_i$ to the $d$-th position and then apply Proposition \ref{xi-twists eqs} (ii)
with $\ell =1$. After this, $c_i$ becomes $c_i^{-1}=x^{n-s_i}$, then set $r_i=n-s_i$.   Finally, using the braid group,
we can order the $c_i$'s such that $r_i\leq r_{i+1}$.

So, we have proved that
$$
v\sim (x^{\ul{r}};x^{\l_1}y,x^{\mu_1},x, 1 , \dots , 1) \, ,
$$
with $r_1 \leq \,  \dots \,  \leq r_d <\frac{n}{2}$. Now the condition $ev(v)=1$ implies that ${2\mu_1}=\sum_1^d r_i $ (mod $n$),
therefore set $h:=\mu_1$ (mod $n$). 

We reach the normal form after applying the automorphism $x^{\lambda_1}y\mapsto y$, $x\mapsto x$.
\qed

\begin{lemma}
Let $n=2k\in \NN$ and let $v \in HS(D_n;g',d)$ with $x^k \in \Sigma_v \subset \ZZ/n\ZZ$.
Then we have:
$$
v\approx (x^{\ul{r}};y,x^{h},x, 1 , \dots , 1) \, ,
$$
where $\ul{r}=(r_1, \dots , r_d)$, $x^{\ul{r}}=(x^{r_1}, \dots , x^{r_d})$,
$r_1 \leq \,  \dots \,  \leq r_d = k$,  $2h= \sum_1^d r_i$ (mod $n$) and $h< k$.
\end{lemma}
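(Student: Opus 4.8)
The plan is to mirror the proof of the preceding (odd) lemma step by step, the single new ingredient being the final reduction $h<k$, which I would extract from the central self-inverse element $x^k$. First I would put $H=\langle c_1,\dots,c_d\rangle$; since $x^k\in\Sigma_v$ one of the $c_i$ equals $x^k$, so $x^k\in H\subseteq\ZZ/n\ZZ$ and $G':=D_n/H$ is again dihedral $D_e$ (or $\ZZ/2\times\ZZ/2$, or $\ZZ/2$) with $e\mid k$. Passing to $\ol v=(\ol{a_1},\ol{b_1},\dots,\ol{a_{g'}},\ol{b_{g'}})\in HS(G';g',0)$, I would use Proposition \ref{etale} (and the analogous statements for cyclic and $\ZZ/2\times\ZZ/2$ covers) to bring $\ol v$ to its \'etale normal form. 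The only deviation from the odd case is that for $G'=D_e$ with $e$ even Proposition \ref{etale} may return $(\ol y,\ol x^{\,e/2},\ol x,1,\dots,1)$ instead of $(\ol y,1,\ol x,1,\dots,1)$; this is harmless, since it only shifts the residue of the exponent $\mu_1$ below modulo $e$ and leaves the shape of the final normal form untouched (in accordance with $H_{2,\Sigma}(D_n)=\{0\}$, Corollary \ref{H2GammaD}).

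Next, exactly as in the odd lemma, I would lift the reducing mapping class through Proposition \ref{auts extend} and apply the $\xi$-twists of Proposition \ref{xi-twists eqs} to kill the handles of index $\ge 2$ and normalise $a_2$ to $x$, reaching $v\approx(c_1,\dots,c_d;\,x^{\l_1}y,\,x^{\mu_1},\,x,\,1,\dots,1)$ with $a_1=x^{\l_1}y$ a reflection and $b_1=x^{\mu_1}$ a rotation. I would then fold the branch entries: since rotations commute, braids permute the $c_i$ freely, and to fold an entry $c_i=x^{s_i}$ with $s_i>k$ I move it to the last slot and apply Proposition \ref{xi-twists eqs}(ii) with $\ell=1$, which (as $a_1$ is a reflection) inverts it to $x^{n-s_i}$ while modifying $b_1$. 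After reordering I may assume $0<r_1\le\cdots\le r_d\le k$; since $x^k\in\Sigma_v$ is fixed by folding, $r_d=k$, and the relation $ev(v)=1$ gives $2\mu_1\equiv\sum_i r_i\pmod n$.

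It remains to force $h:=\mu_1<k$, which is the heart of the matter. The congruence $2\mu_1\equiv\sum r_i\pmod{2k}$ pins $\mu_1$ only modulo $k$; its two solutions modulo $n$ are $\mu_1$ and $\mu_1+k$. I would pass between them by applying the same inversion move, Proposition \ref{xi-twists eqs}(ii) with $\ell=1$, to the central entry $x^k$: because $x^{-k}=x^k$ this leaves the whole branch vector unchanged (still $r_d=k$), while it multiplies $b_1=x^{\mu_1}$ by $x^{-k}=x^k$, i.e.\ $\mu_1\mapsto\mu_1+k$. That $ev$ is preserved is confirmed by $[y,x^{\mu_1}]=x^{-2\mu_1}=x^{-2\mu_1-2k}=[y,x^{\mu_1+k}]$. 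Choosing the solution in $[0,k)$ yields $h<k$, and the automorphism $x^{\l_1}y\mapsto y$, $x\mapsto x$ finally produces the stated normal form. I expect the main obstacle to be the bookkeeping with the Appendix: one must verify that in this reflection-handle configuration Proposition \ref{xi-twists eqs}(ii) inverts the selected branch entry while multiplying $b_1$ by its inverse, so that applied to the self-inverse central entry $x^k$ it realises precisely $b_1\mapsto b_1x^k$ with no other change.
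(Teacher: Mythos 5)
Your proposal is correct and follows essentially the same route as the paper: reduce the handle entries via the quotient $G'=D_n/H$ and the \'etale results, normalise to $(c_1,\dots,c_d;x^{\l_1}y,x^{\mu_1},x,1,\dots,1)$ with the $\xi$-twists, fold the branch entries to exponents $\le k$, and then exploit the self-inverse central entry $x^k$ so that the $\ell=1$ twist of Proposition \ref{xi-twists eqs}(ii) leaves $r_d=k$ fixed while replacing $\mu_1$ by $\mu_1+k$, forcing $h<k$. Your explicit verification that this move sends $b_1\mapsto b_1x^{-k}=b_1x^k$ is exactly the computation the paper relies on implicitly.
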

\proof
Proceeding as in the proof of the previous lemma,  we have:
$$
\exists \varphi \in Map(g',d) \quad \mbox{such that }  \, \varphi \cdot v = (c_1, \dots , c_d \, ; \, x^{\ell_1}y, x^{m_1}, x^{\ell_2}, \dots , x^{m_{g'}}) \, ,
$$
where $x^{m_i}\in H$, $\forall i>1$,  $x^{\ell_2} = x$ (mod $H$) and $x^{\ell_i} \in H$, $\forall i>2$. \\
Since we can multiply all the $x^{\ell_i}$, $i>1$, by any element of $H$ (apply Proposition  \ref{xi-twists eqs} (i)   with $\ell =2,3, \dots , g'$),
we have: $\exists \psi \in Map^u(g',d+1)$ such that
$$
\psi \cdot \varphi \cdot v= (c_1, \dots , c_d \, ; \, x^{\ell_1}y, x^{m_1}, x, x^{m_2}, 1, x^{m_3},  \dots ,1, x^{m_{g'}})\,  . 
$$
Similarly, using Proposition \ref{xi-twists eqs} (ii), we get: 
$$
v \approx (c_1, \dots , c_d \, ; \, x^{\ell_1}y, x^{m_1}, x,  1,   \dots ,1, 1)\, .
$$

Now, for any $i=1, \dots , d$, consider $c_i=x^{s_i}$. If $s_i \leq k$, set $r_i=s_i$, otherwise use the braid group
to move $c_i$ to the $d$-th position and then apply Proposition \ref{xi-twists eqs} (ii)
with $\ell =1$. In this way $c_i$ becomes $c_i^{-1}$ and so set $r_i=2k-s_i$. 

So, we have proved that
$$
v\approx (x^{\ul{r}};x^{\l_1}y,x^{\mu_1},x, 1 , \dots , 1) \, ,
$$
with $r_i \leq k$, $\forall i$. Now the condition $ev(v)=1$ implies that ${2\mu_1}=\sum_1^d r_i $ (mod $n$).
If $\mu_1<k$,  set $h=\mu_1$. Otherwise, apply braid group transformations  to achieve the ordering $r_i\leq r_{i+1}$, $\forall i\leq d-1$.
By hypotheses $r_d=k$ and we   apply Proposition 
\ref{xi-twists eqs} with $\ell =1$. Since $x^k$ is central, this operation does not change $r_d$, while $x^{\mu_1}$ becomes $x^{\mu_1+k}$.
Set $h=\mu_1 + k $ (mod $n$). 

Finally apply the appropriate element of  $Aut(D_n)$ to reach the normal form.
\qed

\medskip

We now consider the last case.
\begin{lemma}\label{bad case}
Let $n=2k\in \NN$ and let $v \in HS(D_n;g',d)$ with $\Sigma \subset \ZZ/n\ZZ \setminus \{ x^k \}$.
Then we have:
\begin{itemize}
\item[(i)]
$v \approx  v' := (x^{\ul{r}}; y,x^{h},x, 1 , \dots , 1)$,
where $\ul{r}=(r_1, \dots , r_d)$, $x^{\ul{r}}=(x^{r_1}, \dots , x^{r_d})$,
$r_1 \leq \,  \dots \,  \leq r_d < k$,  $2h= \sum_1^d r_i$ $(\!\!\!\! \mod n)$;
\item[(ii)]
let  $v_1'=(x^{\ul{r}}; y,x^{h},x, 1 , \dots , 1)$ and $v_2'=(x^{\ul{r}}; y,x^{h+k},x, 1 , \dots , 1)$,
then $\e({v_1'})\not= \e({v_2'})\in (D_n)_\Sigma$;
\item[(iii)]
$v_1'\approx v_2'$ if and only if $\exists f\in Aut(D_n)$ such that $f(\Sigma)=\Sigma$ and $f(\e(v_1'))=\e (v_2')$.
\end{itemize}
\end{lemma}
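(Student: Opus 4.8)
The plan is to dispose of the three parts in order, reusing the machinery already set up for Cases 1 and 2 and for the previous two lemmas.

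\textbf{Part (i).} I would argue closely following the two preceding lemmas. Passing to the quotient $G'=D_n/H$, with $H=\langle c_1,\dots,c_d\rangle\subseteq\ZZ/n\ZZ$, and invoking the irreducibility of the moduli of \'etale $G'$-covers (Proposition \ref{etale} together with its cyclic and $(\ZZ/2\ZZ)^2$ analogues), one normalizes the handle part so that, after Propositions \ref{auts extend} and \ref{xi-twists eqs}, $v\approx(c_1,\dots,c_d;x^{\l_1}y,x^{\mu_1},x,1,\dots,1)$. One then normalizes each $c_i=x^{s_i}$: a braid move carries it to the last slot, and the $\xi$-twist of Proposition \ref{xi-twists eqs}(ii) with $\ell=1$ replaces $x^{s_i}$ by $x^{-s_i}$, so every exponent may be taken in $(0,k)$; here the hypothesis $x^k\notin\Sigma$ is exactly what forces the strict inequality $r_i<k$. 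Reordering and reading off $ev(v)=1$ yields $2\mu_1\equiv\sum_i r_i\pmod n$, and the automorphism $x^{\l_1}y\mapsto y$, $x\mapsto x$ (which fixes every rotation, hence $\Sigma$) produces the normal form. The one genuinely new point, by contrast with the case $x^k\in\Sigma$, is that no central rotation $x^k$ lies in $\Sigma$ to run the $\xi$-twist turning $\mu_1$ into $\mu_1+k$; thus the choice of $h$ modulo $n$ (rather than modulo $k$) survives, and it is precisely this residual $\ZZ/2$ of freedom that (ii) and (iii) must control.

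\textbf{Part (ii).} This is a direct computation of $\e$. In $\e(v_i')$ the trailing commutators $[\hat x,\hat 1]$ and $[\hat 1,\hat 1]$ are trivial because $\hat 1\in K_\Sigma$ is central (Lemma \ref{centr}); hence $\e(v_1')=\bigl(\prod_j\widehat{x^{r_j}}\bigr)[\hat y,\widehat{x^{h}}]$ and similarly for $v_2'$ with $\widehat{x^{h+k}}$. As $v_1'$ and $v_2'$ have the same $\nu$-type and satisfy $ev=1$, their ratio lies in $H_{2,\Sigma}(D_n)$ (the final Proposition of Section 3) and equals $[\hat y,\widehat{x^{h}}]^{-1}[\hat y,\widehat{x^{h+k}}]$. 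To identify this class I pass to the Schur cover via Corollary \ref{Schur cover}: lifting $y$ to $\eta=\underline j$ and $x$ to $\xi$, the relation $[\xi^\ell,\eta]=\xi^{2\ell}$ from the proof of Proposition \ref{H2Dn} gives the image $\xi^{-2k}=\xi^{-n}=-1$, the nontrivial element of $H_2(D_n,\ZZ)\cong H_{2,\Sigma}(D_n)\cong\ZZ/2\ZZ$ (Corollary \ref{H2GammaD}). Therefore $\e(v_1')\neq\e(v_2')$.

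\textbf{Part (iii).} The ``only if'' direction is immediate: if $v_1'\approx v_2'$, the $\Aut(D_n)$-component $f$ of the equivalence satisfies $f(\Sigma)=\Sigma$ (the $c$-parts of $v_1',v_2'$ coincide), and by the $\Aut$-equivariance (Lemma \ref{aut-inv}) and $Map$-invariance (Proposition \ref{e-inv}) of $\e$ we get $f(\e(v_1'))=\e(v_2')$. For ``if'', given such an $f$ one has $v_1'\approx f(v_1')$, and by Lemma \ref{aut-inv}(ii), $\e(f(v_1'))=f(\e(v_1'))=\e(v_2')$ exactly in $G_\Sigma$. Since $f(\Sigma)=\Sigma$, the system $f(v_1')$ satisfies the hypotheses of (i) and shares the $\nu$-type of $v_2'$, hence the same sorted multiset $\ul r$; by (i) it reduces to a normal form differing from $v_2'$ at worst by the ambiguity $h\leftrightarrow h+k$. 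By (ii) that ambiguity is faithfully recorded by the value of $\e$ in $G_\Sigma$, and the reductions of (i) fix $\Sigma$ and act trivially on $H_{2,\Sigma}(D_n)$ (Proposition \ref{H2Dn}(ii)). As $\e(f(v_1'))=\e(v_2')$, the two normal forms coincide, so $f(v_1')\approx v_2'$ and hence $v_1'\approx v_2'$.

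\textbf{Main obstacle.} The delicate point lies entirely in (iii): one must cleanly separate the honest $\Aut(D_n)$-ambiguity (absorbed into the hypothesis on $f$) from the $Map$-reduction of (i), and check that the latter never disturbs the $\ZZ/2=H_{2,\Sigma}(D_n)$-class distinguishing $h$ from $h+k$. This is exactly where the triviality of the $\Aut(D_n)$-action on $H_2(D_n,\ZZ)$ (Proposition \ref{H2Dn}(ii)), transported to $H_{2,\Sigma}$ via Lemma \ref{H2toH2Gamma}, is indispensable.
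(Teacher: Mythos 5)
Your proposal is correct and follows essentially the same route as the paper: part (i) by the reduction of the preceding lemma (with $x^k\notin\Sigma$ blocking the final shift of $h$ by $k$), part (ii) by the direct computation in the Schur cover via Corollary \ref{Schur cover}, and part (iii) by combining the normal form, the $\Aut$-equivariance of $\e$, and the fact that $\e$ detects the $h\leftrightarrow h+k$ ambiguity. Your write-up is in fact somewhat more explicit than the paper's (which compresses (ii) to "a direct computation" and (iii) to a one-line deduction), but no new idea is involved.
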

\proof
The proof of (i) is the same as that of the previous lemma. Since in this case $x^k \not\in \Sigma$, 
we can not achieve $h\leq k$.

To prove (ii) recall that $\e (v):=ev(\hat{v})\in (D_n)_\Sigma$. So, if $ev(\widehat{v_1'})=ev(\widehat{v_2'})$, 
then $ev(\widehat{v_2'})^{-1}\cdot ev(\widehat{v_1'})=0 \in H_{2, \Sigma}(D_n)$.
But now a direct computation shows that 
$ev(\widehat{v_2'})^{-1}\cdot ev(\widehat{v_1'})\not= 0$ (Corollary \ref{Schur cover}), a contradiction.

(iii) The ``only if" part is clear. 
So, assume that $\exists f\in Aut(D_n)$ such that $f(\Sigma)=\Sigma$ and $f(\e(v_1'))=\e (v_2')$.
Since $f(\e(v_1'))=\e(f(v_1'))$, we have $\e(f(v_1'))=\e(v_2')$ and so $v_2'$ and $f(v_1')$ have the same 
$\nu$-type (Remark \ref{evsnu}).
>From (i) and (ii) we deduce that 
$$
f(v_1')\approx (x^{\ul{r}};x^{\l_1} y,x^{h+k},x, 1 , \dots , 1) \, .
$$
 Hence,
using the automorphism $x^{\l_1}y\mapsto  y$, $x\mapsto x$, we have that $f(v_1')\approx v_2'$ and so the claim follows.

\qed

\section{Appendix A: Automorphisms of surface-groups}
We collect in this Appendix some  facts about mapping class groups and their action 
on fundamental groups. They  should be  well known to experts, we include them here 
for completeness.

Let $Y$ be a compact Riemann surface of genus $g'$ and let $\mathcal{B}=\{ y_1, \dots , y_d\}\subset Y$
be a finite subset  of cardinality $d$. After the choice of a geometric basis of 
 $Y\setminus \mathcal{B}$, we have the following presentation of the 
 fundamental group:
\begin{align*}
& \pi_1(Y\setminus \mathcal{B}, y_0)=\\
&\langle \,  \gamma_1\, , \, \dots \, , \, \gamma_d \, , \, \alpha_1\, , \, \beta_1\, , \, \dots \, ,
\, \alpha_{g'}\, , \, \beta_{g'}
 \, | \, 
\gamma_1\cdot \, \dots \,
\cdot \gamma_d \cdot \Pi_{i=1}^{g'}[\, \alpha_i\, , \, \beta_i\, ]
=1\, \rangle \, .
\end{align*}

Following \cite{Bir69}, there is a short exact sequence 
\begin{equation}\label{mapout}
1\to \pi_1(Y\setminus \mathcal{B}, y_0) \xrightarrow{\Xi} Map^u(Y,\{ y_0,y_1,\dots , y_d\})\to Map^u(Y,\mathcal{B})\to 1
\end{equation}
which induces an injective group homomorphism 
$$
Map^u(Y,\mathcal{B}) \to Out\left( \pi_1(Y\setminus \mathcal{B}, y_0)\right)
$$ 
(\cite{Bir69}, Thm. 4).
The map $\Xi$ is defined  as follows. Let   $[c]\in  \pi_1(Y\setminus \mathcal{B}, y_0) $ be an element of the  geometric basis
and let $c\colon [0,2\pi]\to Y\setminus \mathcal{B}$ be a simple, smooth loop based at $y_0$,
representing $[c]$. 
Let $\Xi([c])$ be the isotopy class of the $\xi$-twist, $\xi_c$. Then extend $\Xi$ to the whole group as an homomorphism.
Recall that the $\xi$-twist, $\xi_c$,  can be defined as follows.  Let $N\subset Y\setminus \mathcal{B}$ be a tubular neighborhood of $c$ and let 
$
e\colon A \to N 
$
be a diffeomorphism between the annulus $A=\{ z=re^{i\theta}\in \mathbb{C}\, | \, 1\leq r \leq 2\}$ and $N$
such that $e(\frac{3}{2}, \theta)=c(\theta)$. Define $h\colon A\to A$ as follows
$$
h(r,\theta)=\begin{cases} (r,\theta +4\pi(r-1))\, , \, 1\leq r \leq \frac{3}{2};\\
(r,\theta +4\pi(2-r))\, , \, \frac{3}{2}\leq r \leq 2\, .\end{cases}
$$

\begin{tikzpicture}
\draw[->] (-0.2,0) -- (6,0) node[right] {$r$}; 
\draw[->] (0,-0.2) -- (0,4) node[above] {$\theta$}; 

\draw (2,-.1) -- (2,.1) node[below=4pt] {$1$};
\draw (3,-.1) -- (3,.1) node[below=4pt] {$\frac{3}{2}$};
\draw (4,-.1) -- (4,.1) node[below=4pt] {$2$};

\draw (-.1,3) -- (.1,3) node[left=4pt] {$2\pi$};

\draw[dashed] (.1,3) -- (3,3);
\draw[dashed] (3,.1) -- (3,3);

\draw (2,0) -- (3,3);
\draw (3,3) -- (4,0);
\node at (4.5,2) {$h(\{\theta =0\})$};

\end{tikzpicture}

Then $h$ is a diffeomorphism which is the identity when $r=1, \frac{3}{2} , 2$. Finally, define  $\xi_c\colon Y \to Y$ 
as the identity on
$Y\setminus N$ and as $e\circ h\circ e^{-1} $ on $N$.

>From the sequence \eqref{mapout}, it follows that  $\pi_1(Y\setminus \mathcal{B}, y_0)$ is isomorphic
through $\Xi$ to a normal subgroup of $Map^u(Y,\{ y_0,y_1,\dots , y_d\})$, hence we get an action by conjugation 
of $Map^u(Y,\{ y_0,y_1,\dots , y_d\})$ 
on $\pi_1(Y\setminus \mathcal{B}, y_0)$:
$$
[f]\cdot [\xi_c]=[f\circ \xi_c \circ f^{-1}]\, .
$$
We have:
\begin{lemma}\label{mapaut}
For any $[f]\in Map^u(Y,\{ y_0,y_1,\dots , y_d\})$ and $[c]\in \pi_1(Y\setminus \mathcal{B}, y_0)$, we have:
$$
[f]\cdot [\xi_c]=[\xi_{f_{\#}(c)}]\, ,
$$
where $f_{\#}(c)(\theta)=(f\circ c)( \theta)$.
\end{lemma}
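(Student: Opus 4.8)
The plan is to prove the identity by unwinding the explicit definition of the $\xi$-twist and exploiting the naturality of that construction under diffeomorphisms. Recall that $\xi_c$ is built from a tubular neighborhood $N$ of $c$ together with a parametrizing diffeomorphism $e\colon A\to N$ satisfying $e(\frac{3}{2},\theta)=c(\theta)$, being the identity on $Y\setminus N$ and equal to $e\circ h\circ e^{-1}$ on $N$, where $h\colon A\to A$ is the fixed model twist. The key observation is that conjugation by $f$ simply transports this whole package to the image curve.

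First I would compute the conjugate directly. Since $\xi_c$ is the identity on $Y\setminus N$, the map $f\circ \xi_c\circ f^{-1}$ is the identity on $f(Y\setminus N)=Y\setminus f(N)$, and on $f(N)$ it equals
$$
f\circ(e\circ h\circ e^{-1})\circ f^{-1}=(f\circ e)\circ h\circ (f\circ e)^{-1}\, .
$$
Next I would recognize the right-hand side as a $\xi$-twist about $f_{\#}(c)$. Setting $N'=f(N)$ and $e'=f\circ e\colon A\to N'$, the fact that $f$ is a diffeomorphism makes $N'$ a tubular neighborhood of the simple loop $f\circ c=f_{\#}(c)$, and $e'$ a parametrizing diffeomorphism with $e'(\frac{3}{2},\theta)=f(c(\theta))=f_{\#}(c)(\theta)$. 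Moreover $e'$ is orientation-compatible because $f\in Map^u$ is orientation-preserving, so the same model twist $h$ applies. Hence $f\circ \xi_c\circ f^{-1}$ is the identity outside $N'$ and equals $e'\circ h\circ (e')^{-1}$ on $N'$, which is exactly the $\xi$-twist $\xi_{f_{\#}(c)}$ associated to the admissible choices $(N',e')$.

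Finally, I would invoke the standard fact that the isotopy class of a $\xi$-twist about a simple loop depends only on the isotopy class of that loop and not on the auxiliary choice of tubular neighborhood and parametrization (which follows from uniqueness of tubular neighborhoods up to isotopy, the relevant space of admissible data being connected once the orientation is fixed). This yields $[f]\cdot[\xi_c]=[f\circ \xi_c\circ f^{-1}]=[\xi_{f_{\#}(c)}]$, as claimed.

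The main obstacle is really the verification that the conjugated map is \emph{literally} a model $\xi$-twist in the prescribed normalized form, rather than merely isotopic to one: one must check that the parametrization $f\circ e$ respects both the orientation convention and the normalization $e'(\frac{3}{2},\theta)=f_{\#}(c)(\theta)$ built into the definition of the twist. Since $f$ is an orientation-preserving diffeomorphism fixing the marked points $\{y_0,y_1,\dots,y_d\}$, both conditions hold automatically and no correction (such as replacing $h$ by $h^{-1}$, which would occur for an orientation-reversing $f$) is needed. Once this is in hand, the remaining identifications are purely formal.
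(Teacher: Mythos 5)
Your argument is correct and is essentially the paper's own proof: both compute $f\circ\xi_c\circc f^{-1}=(f\circ e)\circ h\circ(f\circ e)^{-1}$ on the transported neighborhood and the identity elsewhere, then recognize $f\circ e$ as an admissible tubular-neighborhood parametrization for $f_{\#}(c)$. Your additional remarks on orientation-compatibility and independence of the auxiliary choices merely make explicit what the paper leaves implicit.
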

\begin{proof}
$f\circ \xi_c \circ f^{-1}=(f\circ e)\circ h \circ (f\circ e)^{-1}$ on $N$, and coincides with the identity on $Y\setminus N$.
The result then follows because $f\circ e\colon A \to Y$ is a tubular neighborhood of $f_{\#}(c)$.
\end{proof}

One can define, in the same way, $\xi$-twists with respect to loops that are not based at $y_0$
and Lemma \ref{mapaut} is still valid.
In the following result we give the action of $\xi$-twists around certain loops in terms of 
a given geometric basis of $\pi_1(Y\setminus \mathcal{B}, y_0)$.

\begin{prop}\label{xi-twists eqs}
Let $ \gamma_1\, , \, \dots \, , \, \gamma_d \, , \, \alpha_1\, , \, \beta_1\, , \, \dots \, ,\, \alpha_{g'}\, , \, \beta_{g'}$ be a fixed geometric basis
of  $\pi_1(Y\setminus \mathcal{B}, y_0)$. 
\begin{itemize}
\item[(i)]
Let $c \subset Y\setminus \mathcal{B}$ be the loop in Figure 1, image of the two sides of the angle inside the polygon with vertex $y_d$. Set $u=\prod_{k=1}^{\ell -1}[\alpha_k,\beta_k]$.
 Then we have:
\begin{eqnarray*}
(\xi_{c})_*(\alpha_\ell)&=&u^{-1}\gamma_d u \alpha_\ell\, ; \\
(\xi_{c})_*(\gamma_d)&=& \left( \gamma_d u\alpha_\ell \beta_\ell \alpha_\ell^{-1}u^{-1}\right)
\gamma_d\left( \gamma_d u\alpha_\ell \beta_\ell \alpha_\ell^{-1}u^{-1}\right)^{-1}\, ;
\end{eqnarray*}
$(\xi_{c})_*(\alpha_i)=\alpha_i$ ($i\not= \ell$), $(\xi_{c})_*(\beta_i)=\beta_i$ ($\forall \, i$), $(\xi_{c})_*(\gamma_j)=\gamma_j$ ($j\not=d$).
\item[(ii)]
Let $c  \subset Y\setminus \mathcal{B}$ be the loop in Figure 2,  image of the two sides of the angle inside the polygon with vertex $y_d$. Set $u=\prod_{k=1}^{\ell -1}[\alpha_k,\beta_k]$. Then we have:
\begin{eqnarray*}
(\xi_{c})_*(\beta_\ell) &=& \alpha_\ell^{-1}u^{-1}\gamma_d u\alpha_\ell\beta_\ell \, ; \\
(\xi_{c})_*(\gamma_d)&=& \left( \gamma_d u[\alpha_\ell,\beta_\ell]\alpha_\ell^{-1}u^{-1}\right)
\gamma_d\left( \gamma_d u[\alpha_\ell,\beta_\ell]\alpha_\ell^{-1}u^{-1}\right)^{-1}\, ;
\end{eqnarray*}

$(\xi_{c})_*(\beta_i)=\beta_i$ ($i\not= \ell$), $(\xi_{c})_*(\alpha_i)=\alpha_i$ ($\forall \, i$), $(\xi_{c})_*(\gamma_j)=\gamma_j$ ($j\not=d$).
\end{itemize} 
\end{prop}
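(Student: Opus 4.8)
The plan is to read each formula directly off the definition of the $\xi$-twist, using that $\xi_c$ is the identity outside a tubular neighborhood $N$ of the simple loop $c$ (which we may take to avoid $y_0$, so that $(\xi_c)_*$ genuinely acts on $\pi_1(Y\setminus\mathcal{B},y_0)$). First I would dispose of the generators that are fixed: after a small isotopy, the curve $c$ of Figure 1 (resp.\ Figure 2) meets the chosen geometric basis transversally only along the puncture loop $\gamma_d$ and along $\alpha_\ell$ (resp.\ $\beta_\ell$); every other $\gamma_j$ ($j\neq d$), $\alpha_i$, $\beta_i$ can be pushed off $N$ and is therefore left unchanged by $(\xi_c)_*$. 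This gives at once all the invariance assertions in the statement.

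For the single generator that genuinely crosses $c$, I would invoke the standard local model of a Dehn twist: at the unique transverse intersection the twist inserts one full copy of the based loop $c$. The only delicate point is to express $c$ as a loop based at $y_0$. Connecting $c$ to $y_0$ along the basis as drawn in the figure, one reads $c \simeq u^{-1}\gamma_d u$ with $u=\prod_{k=1}^{\ell-1}[\alpha_k,\beta_k]$, the factor $u$ recording the arc that runs past the first $\ell-1$ handles before reaching the $\ell$-th. Lemma \ref{mapaut} justifies this transport, since it identifies the effect of conjugating $c$ by an ambient diffeomorphism with the corresponding conjugation of based loops. Substituting into the insertion rule yields $(\xi_c)_*(\alpha_\ell)=u^{-1}\gamma_d u\,\alpha_\ell$ in case (i); in case (ii) the arc of $c$ enters the $\ell$-th handle across $\alpha_\ell$ before meeting $\beta_\ell$, which accounts for the extra conjugation by $\alpha_\ell^{-1}$ and gives $(\xi_c)_*(\beta_\ell)=\alpha_\ell^{-1}u^{-1}\gamma_d u\,\alpha_\ell\beta_\ell$.

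It remains to find the image of $\gamma_d$, which I would obtain by imposing that $(\xi_c)_*$ respect the defining relation $\prod_1^d\gamma_j\cdot\prod_1^{g'}[\alpha_i,\beta_i]=1$. Since the images of all other generators are now known, the relation determines $(\xi_c)_*(\gamma_d)$ uniquely, and a short rearrangement exhibits it as the conjugate of $\gamma_d$ by $\gamma_d u\alpha_\ell\beta_\ell\alpha_\ell^{-1}u^{-1}$ in case (i), resp.\ by $\gamma_d u[\alpha_\ell,\beta_\ell]\alpha_\ell^{-1}u^{-1}$ in case (ii). Besides producing the formula, this step doubles as the verification that $(\xi_c)_*$ is a genuine automorphism of $\pi_1(Y\setminus\mathcal{B},y_0)$; alternatively one may read the conjugator off the figure as the subarc of $c$ that wraps the $\ell$-th handle.

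The step I expect to be the main obstacle is precisely this bookkeeping of conjugating words: fixing the orientation of $c$ and the sign of the twist, choosing the connecting arc so that $c$ reads exactly as $u^{-1}\gamma_d u$, and confirming that the induced $(\xi_c)_*(\gamma_d)$ matches the stated long conjugator. The underlying topology is routine, but a sign error or a misrouted basepoint path would propagate through all the displayed formulas, so I would cross-check each of them against relation-preservation before finalizing.
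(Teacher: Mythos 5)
Your proposal matches the paper's own proof in all essentials: the paper likewise dismisses the untouched generators because $\xi_c$ is supported in the tubular neighborhood $N$, reads the image of the one affected generator ($\alpha_\ell$, resp.\ $\beta_\ell$) directly off a picture of its image under the twist (Figure 3), and then recovers $(\xi_c)_*(\gamma_d)$ by requiring that the surface relation $\prod_j\gamma_j\cdot\prod_i[\alpha_i,\beta_i]=1$ be preserved. The only cosmetic difference is that you phrase the pictorial step as the insertion rule for a twist at a transverse crossing with $c$ based as $u^{-1}\gamma_d u$, where the paper simply draws the resulting loop; the bookkeeping you flag as the main risk is exactly what the figure encodes, and your formulas are correct.
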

\begin{proof}
(i) The image of $\alpha_\ell$ under $\xi_c$ is drawn in Figure 3. From this it follows the formula for $(\xi_c)_*(\alpha_\ell)$.
Since $\xi_c$ is the identity outside a small tubular neighborhood of $c$, we have that
$(\xi_{c})_*(\alpha_i)=\alpha_i$ ($ i\not= \ell$), $(\xi_{c})_*(\beta_i)=\beta_i$ ($\forall \, i$) and $(\xi_{c})_*(\gamma_j)=\gamma_j$ ($j\not=d$).
The formula for $(\xi_c)_*(\gamma_d)$ is now a consequence of $\gamma_1\cdot\,  \dots \,  \cdot \gamma_d\prod_{i=1}^{g'}[\alpha_i,\beta_i]=1$,
since the product $\gamma_1\cdot\,  \dots \,  \cdot \gamma_d\prod_{i=1}^{g'}[\alpha_i,\beta_i]$ must be left fixed.

The proof of (ii) is similar.
\end{proof}

\begin{tikzpicture} 
\path (0,3) coordinate (origin); 
\path (-75:2.5cm) coordinate (P0); 
\path (-60:2.41cm) coordinate (P01); 
\path (-45:2.5cm) coordinate (P1); 
\path (-15:2.5cm) coordinate (P2); 
\path (15:2.5cm) coordinate (P3); 
\path (45:2.5cm) coordinate (P4); 
\path (60:2.41cm) coordinate (P45); 
\path (75:2.5cm) coordinate (P5);
\path (90:2.41cm) coordinate (P56);
\path (105:2.5cm) coordinate (P6);
\path (120:2.41cm) coordinate (P67);
\path (135:2.5cm) coordinate (P7);
\path (150:2.41cm) coordinate (P78);
\path (165:2.5cm) coordinate (P8);
\path (195:2.5cm) coordinate (P9);
\path (225:2.5cm) coordinate (P10);
\path (255:2.5cm) coordinate (P11);
\path (270:2.41cm) coordinate (P110);
\draw[->] (P0) -- (P01);
\draw (P01) -- (P1);
\node at (-60:2.8cm) {$\alpha_1$};
\draw[dashed] (P1) -- (P2) -- (P3) -- (P4);
\draw[->] (P4) -- (P45);
\draw (P45) -- (P5);
\draw[->] (P5) -- (P56);
\draw (P56) -- (P6);
\draw (P6) -- (P67);
\draw[<-] (P67) -- (P7);
\draw (P7) -- (P78);
\draw[<-] (P78) -- (P8);
\draw[dashed] (P8) -- (P9) -- (P10) -- (P11); 
\draw (P11) -- (P110);
\draw[<-] (P110) -- (P0);

\node at (270:2.8cm) {$\beta_{g'}$};
\node at (60:2.8cm) {$\alpha_{\ell}$};
\node at (90:2.8cm) {$\beta_{\ell}$};
\node at (120:2.8cm) {$\alpha_{\ell}$};
\node at (150:2.8cm) {$\beta_{\ell}$};
\node at (-75:2.8cm) {$y_0$};

\draw (P0) -- (-75:1.1cm);

\node at (-75:0.5cm) {$\bullet$};
\node at (-105:0.6cm) {$y_d$};

\draw[
        decoration={markings, mark=at position 0.625 with {\arrow{<}}},
        postaction={decorate}
        ]
        (-75:0.5) circle (0.6);
        
\draw[
        decoration={markings, mark=at position 0.625 with {\arrow{<}}},
        postaction={decorate}
        ] (-75:0.5) -- (56:2.42);
        
\node at (30:1.2cm) {$c$};

\draw[
        decoration={markings, mark=at position 0.625 with {\arrow{>}}},
        postaction={decorate}
        ] (-75:0.5) -- (124:2.42);

\node at (-30:1.2cm) {$\gamma_d$};

\node at (-90:4cm) {Figure $1$.};

\end{tikzpicture} 
\hfill
\begin{tikzpicture} 
\path (-2,3) coordinate (origin); 
\path (-75:2.5cm) coordinate (P0); 
\path (-60:2.41cm) coordinate (P01); 
\path (-45:2.5cm) coordinate (P1); 
\path (-15:2.5cm) coordinate (P2); 
\path (15:2.5cm) coordinate (P3); 
\path (45:2.5cm) coordinate (P4); 
\path (60:2.41cm) coordinate (P45); 
\path (75:2.5cm) coordinate (P5);
\path (90:2.41cm) coordinate (P56);
\path (105:2.5cm) coordinate (P6);
\path (120:2.41cm) coordinate (P67);
\path (135:2.5cm) coordinate (P7);
\path (150:2.41cm) coordinate (P78);
\path (165:2.5cm) coordinate (P8);
\path (195:2.5cm) coordinate (P9);
\path (225:2.5cm) coordinate (P10);
\path (255:2.5cm) coordinate (P11);
\path (270:2.41cm) coordinate (P110);
\draw[->] (P0) -- (P01);
\draw (P01) -- (P1);
\node at (-60:2.8cm) {$\alpha_1$};
\draw[dashed] (P1) -- (P2) -- (P3) -- (P4);
\draw[->] (P4) -- (P45);
\draw (P45) -- (P5);
\draw[->] (P5) -- (P56);
\draw (P56) -- (P6);
\draw (P6) -- (P67);
\draw[<-] (P67) -- (P7);
\draw (P7) -- (P78);
\draw[<-] (P78) -- (P8);
\draw[dashed] (P8) -- (P9) -- (P10) -- (P11); 
\draw (P11) -- (P110);
\draw[<-] (P110) -- (P0);

\node at (270:2.8cm) {$\beta_{g'}$};
\node at (60:2.8cm) {$\alpha_{\ell}$};
\node at (90:2.8cm) {$\beta_{\ell}$};
\node at (120:2.8cm) {$\alpha_{\ell}$};
\node at (150:2.8cm) {$\beta_{\ell}$};
\node at (-75:2.8cm) {$y_0$};

\draw (P0) -- (-75:1.1cm);

\node at (-75:0.5cm) {$\bullet$};
\node at (-105:0.6cm) {$y_d$};

\draw[
        decoration={markings, mark=at position 0.625 with {\arrow{<}}},
        postaction={decorate}
        ]
        (-75:0.5) circle (0.6);
        
\draw[
        decoration={markings, mark=at position 0.625 with {\arrow{<}}},
        postaction={decorate}
        ] (-75:0.5) -- (80:2.45);
        
\node at (60:1.2cm) {$c$};

\draw[
        decoration={markings, mark=at position 0.625 with {\arrow{>}}},
        postaction={decorate}
        ] (-75:0.5) -- (160:2.45);

\node at (-30:1.2cm) {$\gamma_d$};

\node at (-90:4cm) {Figure $2$.};

\end{tikzpicture} 

\begin{tikzpicture} 
\path (0,3) coordinate (origin); 
\path (-75:2.5cm) coordinate (P0); 
\path (-60:2.41cm) coordinate (P01); 
\path (-45:2.5cm) coordinate (P1); 
\path (-15:2.5cm) coordinate (P2); 
\path (15:2.5cm) coordinate (P3); 
\path (45:2.5cm) coordinate (P4); 
\path (60:2.41cm) coordinate (P45); 
\path (75:2.5cm) coordinate (P5);
\path (90:2.41cm) coordinate (P56);
\path (105:2.5cm) coordinate (P6);
\path (120:2.41cm) coordinate (P67);
\path (135:2.5cm) coordinate (P7);
\path (150:2.41cm) coordinate (P78);
\path (165:2.5cm) coordinate (P8);
\path (195:2.5cm) coordinate (P9);
\path (225:2.5cm) coordinate (P10);
\path (255:2.5cm) coordinate (P11);
\path (270:2.41cm) coordinate (P110);
\draw[->] (P0) -- (P01);
\draw (P01) -- (P1);
\node at (-60:2.8cm) {$\alpha_1$};
\draw[dashed] (P1) -- (P2) -- (P3) -- (P4);
\draw[->] (P4) -- (P45);
\draw (P45) -- (P5);
\draw[
        decoration={markings, mark=at position 0.5 with {\arrow{>}}},
        postaction={decorate} 
        ] (P4) --(-85:1cm) node[right] {$(\xi_{c})_* (\alpha_\ell)$};
\draw (-85:1cm) -- (124:2.42cm);
\draw (124:2.42cm) -- (0:0cm);
\draw[
        decoration={markings, mark=at position 0.5 with {\arrow{>}}},
        postaction={decorate} 
        ] (0:0cm) -- (P5);

\draw[->] (P5) -- (P56);
\draw (P56) -- (P6);
\draw (P6) -- (P67);
\draw[<-] (P67) -- (P7);
\draw (P7) -- (P78);
\draw[<-] (P78) -- (P8);
\draw[dashed] (P8) -- (P9) -- (P10) -- (P11); 
\draw (P11) -- (P110);
\draw[<-] (P110) -- (P0);

\node at (270:2.8cm) {$\beta_{g'}$};
\node at (60:2.8cm) {$\alpha_{\ell}$};
\node at (90:2.8cm) {$\beta_{\ell}$};
\node at (120:2.8cm) {$\alpha_{\ell}$};
\node at (150:2.8cm) {$\beta_{\ell}$};
\node at (-75:2.8cm) {$y_0$};

\node at (-75:0.5cm) {$\bullet$};
        
\draw[
        decoration={markings, mark=at position 0.625 with {\arrow{<}}},
        postaction={decorate}
        ] (-75:0.5) -- (56:2.42);

\draw[
        decoration={markings, mark=at position 0.625 with {\arrow{>}}},
        postaction={decorate}
        ] (-75:0.5) -- (124:2.42);

\node at (-90:4cm) {Figure $3$.};

\end{tikzpicture} 

\begin{prop}\label{auts extend}
Let $\Pi_{g'}=\langle \a_1, \dots , \b_{g'}\, | \, \prod_1^{g'}[\a_i,\b_i]\rangle$ and
 $\Pi_{g'-1}=\langle \a_2, \dots , \b_{g'}\, | \, \prod_2^{g'}[\a_i,\b_i]\rangle$. Then, 
for any $\varphi \in Aut^0(\Pi_{g'-1})$, there exists ${\psi}\in Aut^0(\Pi_{g'})$ and $\d\in \Pi_{g'}$
such that ${\psi}(\a_1)=\a_1$, ${\psi}(\b_1)=\b_1$, ${\psi}(\a_i)=\d \varphi(\a_i)\d^{-1}$,
${\psi}(\b_i)=\d \varphi(\b_i)\d^{-1}$, $i>1$.
\end{prop}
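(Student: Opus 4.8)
The plan is to reduce the statement to a lifting property of orientation-preserving surface automorphisms and then to confirm the explicit formulas by a short computation. First I would fix notation: let $F$ be the free group on $\a_2,\b_2,\dots,\a_{g'},\b_{g'}$ and set $R=\prod_{i=2}^{g'}[\a_i,\b_i]\in F$, so that $\Pi_{g'-1}=F/\langle\langle R\rangle\rangle$. Inside $\Pi_{g'}$ the elements $\a_2,\b_2,\dots,\a_{g'},\b_{g'}$ generate a subgroup which is free of rank $2(g'-1)$, being the fundamental group of the essential (hence $\pi_1$-injective) subsurface of genus $g'-1$ with one boundary component cut off by the separating curve $[\a_1,\b_1]$; I would identify $F$ with this subgroup of $\Pi_{g'}$. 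The crucial input I would invoke is the classical geometric realization of orientation-preserving surface automorphisms: every $\varphi\in Aut^0(\Pi_{g'-1})$ lifts to an automorphism $\Phi\in Aut(F)$ carrying $R$ to a conjugate of itself with the orientation-preserving sign, i.e. $\Phi(R)=\delta^{-1}R\,\delta$ for some $\delta\in F$. Equivalently this is the surjectivity of the capping homomorphism onto $Map_{g'-1}$ of the closed surface, together with the fact that a mapping class fixing the boundary acts on the free group $F$ preserving the boundary word $R$ up to conjugacy. I expect this lift to be the one genuinely nonformal ingredient, and the crux of the whole argument.

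With such a $\Phi$ and $\delta$ fixed, I would define $\psi$ on generators by $\psi(\a_1)=\a_1$, $\psi(\b_1)=\b_1$, and $\psi(\a_i)=\delta\,\Phi(\a_i)\,\delta^{-1}$, $\psi(\b_i)=\delta\,\Phi(\b_i)\,\delta^{-1}$ for $i>1$, all read inside $\Pi_{g'}$, and check that this respects the single relator $W:=\prod_{i=1}^{g'}[\a_i,\b_i]=[\a_1,\b_1]\,R$. Using the identity $[\delta g\delta^{-1},\delta h\delta^{-1}]=\delta[g,h]\delta^{-1}$, that $\Phi$ is a homomorphism, and finally $\Phi(R)=\delta^{-1}R\delta$, the computation I would carry out reads
\begin{align*}
\psi(W)&=[\a_1,\b_1]\cdot\prod_{i=2}^{g'}\bigl[\delta\Phi(\a_i)\delta^{-1},\,\delta\Phi(\b_i)\delta^{-1}\bigr]
=[\a_1,\b_1]\cdot\delta\Bigl(\prod_{i=2}^{g'}[\Phi(\a_i),\Phi(\b_i)]\Bigr)\delta^{-1}\\
&=[\a_1,\b_1]\cdot\delta\,\Phi(R)\,\delta^{-1}=[\a_1,\b_1]\cdot R=W.
\end{align*}
Thus $\psi(W)=W=1$ in $\Pi_{g'}$, so $\psi$ descends to an endomorphism of $\Pi_{g'}$; moreover $\psi$ fixes the relator $W$ exactly (not merely up to conjugacy or inversion), so it is orientation-preserving.

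It then remains to verify bijectivity and to match the formulas with the statement. Since $\delta\in F$ and $\Phi$ is onto $F$, the images $\psi(\a_i),\psi(\b_i)$ with $i>1$ generate $\delta F\delta^{-1}=F$ (conjugation of the subgroup $F$ by one of its own elements), and together with the fixed $\a_1,\b_1$ they generate all of $\Pi_{g'}$, so $\psi$ is surjective; as $\Pi_{g'}$ is finitely generated and residually finite, hence Hopfian, a surjective endomorphism is an automorphism, giving $\psi\in Aut^0(\Pi_{g'})$. Finally, because $\Phi$ reduces modulo $\langle\langle R\rangle\rangle$ to $\varphi$, the element $\Phi(\a_i)\in F\subset\Pi_{g'}$ is exactly what the statement denotes $\varphi(\a_i)$ (and similarly for $\b_i$), so $\psi(\a_i)=\delta\varphi(\a_i)\delta^{-1}$ and $\psi(\b_i)=\delta\varphi(\b_i)\delta^{-1}$ as required. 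The hypothesis $\varphi\in Aut^0$ is used precisely to secure a lift with $\Phi(R)$ conjugate to $R$ rather than to $R^{-1}$ (an orientation-reversing $\varphi$ would produce $\Phi(R)$ conjugate to $R^{-1}$, and the computation above would fail), which is why I regard that lifting step as the main obstacle.
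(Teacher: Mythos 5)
Your proposal is correct and takes essentially the same route as the paper: the paper also lifts $\varphi$ to an automorphism $\tilde\varphi$ of the free group $\pi_1$ of the once-punctured genus-$(g'-1)$ surface sending the boundary word to a conjugate $\d^{-1}(\cdot)\d$ of itself (realized geometrically by Dehn twists supported away from a disk), and then defines $\psi$ by exactly your formulas. Your added verifications (that $\psi$ fixes the relator $W$ and is bijective via the Hopfian property) are details the paper leaves implicit.
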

\proof
We first extend $\varphi$ to an automorphism 
$$
\tilde{\varphi} \in Aut\left( \langle \a_2, \dots , \b_{g'}, \ga \, | \, \ga \cdot \prod_2^{g'}[\a_i,\b_i]\rangle \right)
$$
such that $\tilde{\varphi}(\a_i)={\varphi}(\a_i)$, $\tilde{\varphi}(\b_i)={\varphi}(\b_i)$ and $\tilde{\varphi} (\ga)= \d^{-1}\ga \d $, $i>1$. Geometrically this corresponds to representing
$\varphi$ as composition of Dehn twists along curves contained in the complement $Y_{g'-1}\setminus D$ of a closed disk
$D$ in a Riemann surface $Y_{g'-1}$ of genus $g'-1$, where $D$ does not intersect $\a_i$ and $\b_i$. 

 Now simply define $\psi (\a_1)=\a_1$, $\psi (\b_1)=\b_1$, $\psi(\a_i)=\d\tilde{\varphi}(\a_i)\d^{-1}$ and $\psi(\b_i)=\d\tilde{\varphi}(\b_i)\d^{-1}$,
 $i>1$.
\qed

\section{Appendix B: Loci and topological types}

In this appendix we show that, up to { essentially only one exception}, the  loci 
$M_{g,\rho}(D_n)$ are in bijection with the unmarked topological types
of $D_n$-actions.

In general, thanks to the work of of Singerman, Ries, and Magaard-Shaska-Shpectorov-V\"olklein (see \cite{singer}, \cite{ries} and \cite{mssv}), this is true more generally
for the irreducible components of the loci $M_g(G)$;  with a finite number of exceptions, they correspond bijectively to 
 the unmarked topological types
of $G$ actions. That there are indeed groups $G$ and  different unmarked topological types
of $G$ actions which yield { the same} component  was already shown by Ries (\cite{ries}).

To have a simple notation, assume in this appendix that $H, H'$ are distinct  finite subgroups of the mapping class group $Map_g$,
and denote by $Z : = Fix(H), Z' : = Fix(H')$ the corresponding irreducible analytic subsets of Teichm\"uller space $\sT_g$. We already observed that, if we denote
by $$\rho : H \ra Map_g, \rho' : H' \ra Map_g$$ the inclusion homomorphisms,  these analytic subsets map to irreducible closed algebraic 
sets $M_{g, \rho} (H)$, respectively $M_{g, \rho'} (H')$. 



We define the generic group of automorphisms of a curve in $Z$ as
$$ G : = G_H : =  \cap_{C \in Z} Stab_C  \  \ \ (Aut (C) \cong Stab_C 
\subset Map_g). $$


We refer to lemma 4.1 of \cite{mssv} for the proof of the following result. 

\begin{theo}\label{MSSV} {\bf (MSSV)} 
 Suppose $H \subset  G$ and $Z$ are as above, with $H$ a proper subgroup of $G$ and $C\in Z$.
Then
$$
\de : = dim (Z)  \leq 3.
$$

I) if  $\de = 3$, then $H$ has index $2$ in $G$, and $C \ra C/G$ is covering of $\PP^1$ branched on six points, $P_1, \dots, P_6$,
and with branching indices all equal to $2$.  Moreover the subgroup $H$ corresponds to the unique genus two double cover of $\PP^1$ 
branched on the six points, $P_1, \dots, P_6$ (by Galois theory, intermediate covers correspond to subgroups of $G$ bijectively).

II)  If $\de = 2$, then $H$ has index $2$ in $G$, and $C \ra C/G$ is covering of $\PP^1$ branched on five points, $P_1, \dots, P_5$,
and with branching indices  { $2,2,2,2,c_5$}.  Moreover the subgroup $H$ corresponds to a genus one double cover of $\PP^1$ 
branched on {  four  of the points  $P_1, \dots, P_4, P_5$ which have branching index $2$}.

III)  If $\de = 1$, then there are three possibilities.

III-a) $H$ has index $2$ in $G$, and $C \ra C/G$ is covering of $\PP^1$ branched on four points, $P_1, \dots, P_4$,
 with branching indices $2,2,2, 2 d_4$, where $d_4 > 1$.  Moreover the subgroup $H$ corresponds to the unique genus one double cover of $\PP^1$ 
branched on the four points, $P_1, \dots, P_4$.

III-b) $H$ has index $2$ in $G$, and $C \ra C/G$ is covering of $\PP^1$ branched on four points, $P_1, \dots, P_4$,
 with branching indices $2,2,c_3, c_4$, where $c_3 \leq c_4 > 2$.  Moreover the subgroup $H$ corresponds to a genus zero double cover of $\PP^1$ 
branched on two points whose branching index equals 2.

III-c) $H$ is normal  in $G$, $ G/H \cong (\ZZ/2)^2$, moreover  $C \ra C/G$ is covering of $\PP^1$ branched on four points, $P_1, \dots, P_4$,
 with branching indices $2,2,2, c_4$, where $ c_4 > 2$.  Moreover the subgroup $H$ corresponds to the unique  genus zero  cover of $\PP^1$ 
with group $(\ZZ/2)^2$ branched on the three  points $P_1, P_2, P_3$ whose branching index equals 2.

\end{theo}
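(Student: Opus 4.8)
The plan is to reduce the statement to a numerical problem about the two signatures attached to $G$ and to $H$, and then to solve that problem by Riemann--Hurwitz bookkeeping on the intermediate cover $C/H \to C/G$.

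First I would record the basic dimension formula: for any finite $K \le \Aut(C)$ the fixed locus $\Fix(K) \subset \sT_g$ has dimension $3(g_K-1)+d_K$, where $g_K$ is the genus of $C/K$ and $d_K$ the number of branch points of $C \to C/K$ (this is the tangent space computation $\dim H^1(C,T_C)^K$, and is exactly the count in Theorem \ref{topological action}). Applying it to $G$ gives $\delta = 3(g'-1)+d$ with $g' = g(C/G)$ and $d$ the number of branch points; applying it to $H$ gives $\delta = 3(h'-1)+s$ with $h' = g(C/H)$ and $s$ the number of branch points of $C \to C/H$. Since $\Fix(G) \subseteq \Fix(H)$ are equal irreducible sets, these two expressions coincide.

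Next set $A := C/G$, $B := C/H$ and let $\varphi \colon B \to A$ be the induced cover, of degree $N := [G:H] \ge 2$; it is branched only over the $d$ branch points $y_1,\dots,y_d$ of $C \to A$, with $G$-branching orders $m_i$. Over $y_i$ the fibre $\varphi^{-1}(y_i)$ consists of $f_i$ points with ramification indices $e_Q \mid m_i$ and $\sum_{Q} e_Q = N$; such a point $Q$ is a branch point of $C \to B$ precisely when $e_Q < m_i$ (the local order of $C \to B$ there being $m_i/e_Q$). Writing $f_i'$ for the number of branch points over $y_i$, so that $s = \sum_i f_i'$, Riemann--Hurwitz for $\varphi$ expresses $h'$ through the $f_i$, and substituting into $3(g'-1)+d = 3(h'-1)+s$ yields the single master identity
\begin{equation*}
6(g'-1)(N-1) + d(3N-2) \;=\; \sum_{i=1}^{d}\bigl(3 f_i - 2 f_i'\bigr).
\end{equation*}
Setting $a_i := f_i - f_i'$ (the unramified points, each carrying $e_Q = m_i$) one has $3f_i - 2f_i' = 3a_i + f_i' \le N + a_i(3-m_i) \le \tfrac{3N}{2}$, the last bound attained only when $m_i = 2$. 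Feeding this into the identity and using $N \ge 2$ forces first $g' = 0$ (any $g' \ge 1$ makes the left side nonnegative while the estimate makes the right side negative) and then $d \le 12(N-1)/(3N-4)$; in particular $d \le 6$, so $\delta = d-3 \le 3$, with $\delta = 3$ possible only for $N = 2$.

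Finally I would run the enumeration. For each admissible pair $(N,\delta)$ the equality cases of the bound pin down the branching profile: they force $m_i = 2$ at the points where $\varphi$ ramifies and determine, through Riemann--Hurwitz for $\varphi$, both the genus of $B = C/H$ and which $y_i$ are traversed by $\varphi$. For $N = 2$ this reproduces case I ($d=6$, all indices $2$, $B$ the genus-two double cover), case II ($d=5$, indices $2,2,2,2,c_5$, $B$ the genus-one double cover over four of the index-$2$ points) and cases III-a,b ($d=4$); for $N \ge 3$ only $\delta = 1$ survives, and the marginal solution $N=4$, $G/H \cong (\ZZ/2)^2$ gives case III-c. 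The main obstacle is exactly this last step: extracting from the arithmetic the precise branching indices and, above all, identifying the intermediate cover $H$ (the hyperelliptic-type double cover, resp.\ the $(\ZZ/2)^2$-cover) as described, which requires tracking the local monodromies surviving in $H$ and checking that each numerically permitted configuration is genuinely realised by a Hurwitz system. This is where I would either carry out the group-theoretic case analysis in detail or, as the authors do, invoke Lemma 4.1 of \cite{mssv}.
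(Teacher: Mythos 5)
The paper does not actually prove this theorem: it is quoted from Lemma 4.1 of \cite{mssv} (``We refer to lemma 4.1 of \cite{mssv} for the proof''), so there is no internal argument to compare yours against. Judged on its own terms, your Riemann--Hurwitz strategy is legitimate and its quantitative core checks out: the dimension count $\de=3(g'-1)+d=3(h'-1)+s$, the master identity $6(g'-1)(N-1)+d(3N-2)=\sum_i(3f_i-2f_i')$, the bound $3f_i-2f_i'\le 3N/2$ with equality only at points of index $2$ totally ramified in $\varphi$, and the consequences $g'=0$, $d\le 12(N-1)/(3N-4)\le 6$, hence $\de=d-3\le 3$ with $\de=3$ only for $N=2$, are all correct. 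This is a genuinely different, self-contained route, and cases I and II do fall out of the equality analysis exactly as you indicate.

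The gap is in the endgame, and it is not merely bookkeeping left to the reader: the arithmetic alone does \emph{not} yield the five listed cases. For $d=4$ the identity $\sum_i(3f_i-2f_i')=6N-2$ has solutions for every $N$: e.g.\ $N=3$ with profile $(2,1)$ over each of four points of index $2$; $N=4$ with profiles $(2,2),(2,2),(2,1,1),(2,1,1)$; and analogues for every even $N\ge 6$. These are eliminated only because they force all $m_i=2$, i.e.\ signature $(0;2,2,2,2)$, whence $g(C)=1<2$ by Hurwitz --- a hyperbolicity constraint your write-up never invokes but cannot do without. At $N=4$ a further numerically consistent profile, $(2,2),(2,2),(2,2),(3,1)$, must be excluded by a monodromy argument (the product of three double transpositions lies in $V_4$ and so cannot equal the inverse of a $3$-cycle), and identifying the survivor as III-c requires observing that a transitive subgroup of $S_4$ generated by double transpositions is $V_4$, so that $H$ is automatically normal with $G/H\cong(\ZZ/2)^2$ rather than merely of index $4$. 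With these ingredients added your proof closes; without them, the assertion that ``for $N\ge 3$ only the marginal solution $N=4$, $G/H\cong(\ZZ/2)^2$ survives'' is unjustified.
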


\begin{cor}
\label{mssvCor}
Assume that
\begin{equation*}
\tag{$\ast$} Z : = Fix(H) = Z' : = Fix(H'),
\end{equation*}
where $ H \neq H'$ (  hence, in the previous notation,  $H$ is a proper subgroup of $G$). 

Then $\de: = dim (Z) \leq 2$.
{ Moreover, if $\de = 2$, then necessarily all the five branching indices are equal to $2$ ($c_5=2$)  }.

Assume further that $H, H'$ have the same cardinality.

 Then $ G : = G_H : =  \cap_{C \in Z} Stab_C$ induces a sequence of coverings $$ C \ra C/H \ra C/G$$  which is not of type
III-c), and , if it is type III-b), then the branching indices  $2,2,c_3, c_4$ must satisfy $c_3 = 2$.

If  $ C \ra C/H \ra C/G$   is  of type
III-a), then  $ C \ra C/H' \ra C/G$ is of type III-b) with branching indices  $ 2,2,2 ,  2 d_4$.

If moreover $\de =0 $, and $H \cong H' \cong D_n$, then necessarily $Z$ and $Z'$ are distinct points.

\end{cor}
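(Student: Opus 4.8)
The plan is to reduce everything to the MSSV classification (Theorem \ref{MSSV}), using that the generic group $G=\cap_{C\in Z}Stab_C$ depends only on $Z$ and is therefore common to $H$ and $H'$; since $Z=Fix(H)=Fix(H')=Fix(G)$, both subgroups lie in $G$ with fixed locus exactly $Z$, and the branch data of $C\to C/G$ is shared by the two towers $C\to C/H\to C/G$ and $C\to C/H'\to C/G$. By the standing hypothesis $H'\not\subseteq H$ we already have $H\subsetneq G$; setting aside the degenerate inclusion $H\subsetneq H'=G$ (the trivial coincidence), we may assume $H,H'$ are proper and incomparable in $G$. The engine is then Galois theory: intermediate covers of $C\to C/G$ correspond bijectively to subgroups of $G$, so whenever MSSV forces the subgroup with a prescribed fixed locus to realise a uniquely determined intermediate cover, that subgroup is unique, and uniqueness contradicts $H\neq H'$.

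First I would bound $\de$. If $\de=3$ we are in case I, where the proper subgroup realising $Z$ must be the index-two subgroup cut out by the unique genus-two double cover of $\PP^1$ branched at $P_1,\dots,P_6$; this subgroup is unique, so $H=H'$, a contradiction, giving $\de\le 2$. If $\de=2$ we are in case II, where $H$ is a genus-one double cover of $\PP^1$ branched at four of the five points, necessarily among those of branching index $2$. Should $c_5>2$, only $P_1,\dots,P_4$ carry index $2$, the cover is forced, $H$ is unique and again $H=H'$; hence $c_5=2$.

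Now assume $|H|=|H'|$, so $[G:H]=[G:H']$, and work in case $\de=1$. Type III-a) and III-b) have index $2$ while III-c) has index $4$, so the two towers are of the same kind. If both are III-c), then $H$ (resp. $H'$) is the unique genus-zero $(\ZZ/2)^2$-cover branched at three of the points, whence $H=H'$; thus the tower is not of type III-c). If it is of type III-b) with indices $2,2,c_3,c_4$ and $c_3>2$, then only $P_1,P_2$ carry index $2$: type III-a) is then impossible (it needs three index-$2$ points) and the III-b) subgroup is the unique double cover branched at $P_1,P_2$, forcing $H=H'$; therefore $c_3=2$. Finally, if the tower for $H$ is of type III-a), with indices $2,2,2,2d_4$ and $H$ the unique genus-one cover branched at all four points, then $H'$ (of index $2$) cannot again be III-a) by the same uniqueness, so it is III-b); since its branching indices are those of $C\to C/G$, they equal $2,2,2,2d_4$.

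It remains to treat $\de=0$ with $H\cong H'\cong D_n$, where Theorem \ref{MSSV} gives no extra structure and I would argue directly by Riemann--Hurwitz. For $H\cong D_n$ the equation $\de=3(g'-1)+d=0$ allows only $(g',d)=(1,0)$, a free action forcing $g=1$ (excluded by $g>1$), or $(g',d)=(0,3)$, i.e. $C/H=\PP^1$ branched at three points with a triple $c_1c_2c_3=1$ generating $D_n$. Such a triple must contain an even number of reflections, and the only generating option is two reflections together with the rotation they produce, of signature $(2,2,n)$; but $\tfrac12+\tfrac12+\tfrac1n>1$ is spherical and yields $g=0$. Hence $D_n$ admits no rigid action in genus $g>1$, so two distinct $D_n$-subgroups can never share a zero-dimensional fixed locus: the assumption $(\ast)$ is impossible here and $Z,Z'$ must be distinct points. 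I expect the delicate part to be the bookkeeping of branching indices in the III-a)/III-b) dichotomy and, more seriously, making the ``uniqueness by Galois correspondence'' step fully rigorous, namely extracting from lemma 4.1 of \cite{mssv} exactly which proper subgroups of $G$ can have $Fix=Z$ in each case.
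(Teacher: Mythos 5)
Your proof is correct and follows essentially the same route as the paper: apply Theorem \ref{MSSV} to both $H$ and $H'$ inside the common generic group $G$, and use the uniqueness of the forced intermediate cover (via the Galois correspondence) to contradict $H\neq H'$ in cases I, II with $c_5>2$, III-c), III-b) with $c_3>2$, and to force $H'$ into type III-b) when $H$ is of type III-a). The only cosmetic difference is in the $\de=0$ step, where you argue directly that the signature must be $(0;2,2,n)$, hence spherical, while the paper cites \cite{CLP11} for the unique topological type $y,yx,x^{-1}$ — both observations amount to the same genus-zero contradiction.
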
 

\Proof
We can  apply the previous theorem to both $H$ and $H'$, which are 
 proper subgroups of the same group $G$.

In the case I) where $\de = 3$, then both $H$ and $H'$ correspond to the unique genus two cover branched on the six points,
contradicting $ H \neq H'$.

{ The same argument applies in case II) when $c_5 > 2$.}

In the case where the covering $ C \ra C/H \ra C/G$ is of type III-c), then the index of $H$ is four and, since $ |H| = |H'|$, the index 
of $H'$ is also four, hence $H'$ corresponds to the unique genus zero $(\ZZ/2)^2$-cover branched on the three
points $P_1, P_2, P_3$; again we obtain the contradiction $H = H'$.

If the sequence of coverings $ C \ra C/H \ra C/G$   is  of type III-b), then the intermediate genus zero covering is unique unless
$ c_3= 2$.

If the type is III-a), the type for $H'$ cannot be the same, since we have a unique intermediate
genus one cover. Since $\de=1$, the index of $H'$ is also equal to two,  and the branching indices are  $ 2,2,2 ,  2 d_4$
the type of $H'$ must be III-b).

Finally, if $\de=0$ and $H \cong H' \cong D_n$, we have a dihedral covering of $\PP^1$ branched in three points, for which
there is a unique topological type (see \cite{CLP11}), corresponding to the monodromy factors $ y, y x, x^{-1}$.

\qed

\begin{rem}
In the case where $H \cong H' \cong D_n$, and $H \neq H'$ we have then $\de=1$ or $\de=2$.
Moreover, both $H, H'$ have index in $G$ equal to $2$.

If $\de = 2$, then $H, H'$ are both of type II), hence they correspond to intermediate genus one covers branched
on four of the five points $P_1, P_2,\dots,  P_5$.

If $\de = 1$, either  $H, H'$ are both of type III-b) (with branching indices $2,2,2,c_4$) hence they correspond to intermediate genus zero covers branched
on two of the three  points $P_1, P_2, P_3$; or, up to exchanging $H$ with $H'$, $H$  is of type III-a) and $H'$ is of type
 III-b) with branching indices $2,2,2, 2 d_4$.

\end{rem}

{ The investigation and detailed classification of these coincidences ($Z= Z'$ and $H \cong H' \cong D_n$, with $H \neq H'$)
is interesting, but shall not be  fully pursued here. We shall limit ourselves to show that it must be $\de = 1$,
and that there is only one exception, namely, the one where one group is of type III-a) and the other of type III-b).}
 
A common feature of all the cases is however the following situation. Defining $ K = H \cap H'$, we have an exact sequence
\begin{equation*}
\tag{$DD$}
1 \ra K \ra G \ra (\ZZ/2)^2 \ra 1.
\end{equation*}

\begin{prop}\label{DD}
Assume that  we have an exact sequence (DD) where $H \cong H' \cong D_n$.
Then there are  elements $\ga_1, \ga_2$ with $\ga_1 ^2 = \ga_2^2 = 1$, whose image  generate $(\ZZ/2)^2$, and such that
$$ H = \langle K , \ga_1 \rangle, \  H' = \langle K , \ga_2 \rangle.$$ 

Moreover  (DD) splits if and only if  $ G \cong   D_n  \times \ZZ/2$,  $H $ corresponds to the subgroup $ D_n \times \{ 0\}$, and $H'$ is the graph 
of a homomorphism $ \phi : D_n \ra \ZZ/2$ with Kernel equal to $K$.

If (DD) does not split, then $n$ is even and 
either $G \cong D_{2n}$ or $n = 4 h$, where $h$ is odd , and $G$ is the semidirect product of $H \cong D_n$ with $\langle \ga_2 \rangle \cong \ZZ/2$,
such that conjugation by $\ga_2$ acts as follows: 
$$  y \mapsto y x^2 , x \mapsto x^{2h-1}.$$ 
\end{prop}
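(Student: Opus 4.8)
The plan is to first produce the involutions $\ga_1,\ga_2$ directly from the subgroup structure of $D_n$, and then to read off the isomorphism type of $G$ from a single characteristic cyclic subgroup, splitting into the cyclic and the dihedral case.

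First I would treat the existence of $\ga_1,\ga_2$. Since $K=H\cap H'$ has index $2$ in each of $H\cong H'\cong D_n$, it is an index-two subgroup of $D_n$; these are exactly the rotation subgroup $\cong \ZZ/n\ZZ$ and, when $n$ is even, the two copies of $D_{n/2}$. In every one of these cases the complement $D_n\setminus K$ contains reflections, and reflections are involutions. Hence I may choose $\ga_1\in H\setminus K$ and $\ga_2\in H'\setminus K$ to be reflections, so $\ga_1^2=\ga_2^2=1$. Because $H\triangleleft G$ and $H'\triangleleft G$ are the preimages of two distinct order-two subgroups of $G/K\cong(\ZZ/2\ZZ)^2$, the images of $\ga_1$ and $\ga_2$ are the nontrivial elements of $H/K$ and $H'/K$ respectively, so together they generate $(\ZZ/2\ZZ)^2$; and $[H:K]=2$ gives $H=\langle K,\ga_1\rangle$, $H'=\langle K,\ga_2\rangle$, as required.

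For the remaining assertions I would organize everything around the rotation subgroup. For $n\geq 3$ the group $D_n$ has a unique cyclic subgroup $C_H\cong\ZZ/n\ZZ$ of order $n$, which is therefore characteristic; since $H\triangleleft G$, this $C_H$, and likewise $C_{H'}$, is normal in $G$. Set $C:=C_H\cap C_{H'}$. If $C_H=C_{H'}$, then $C=C_H\subseteq H\cap H'=K$ has order $n=|K|$, forcing $K=C_H$ to be cyclic; if $C_H\neq C_{H'}$, then $C=C_H\cap K$ has order $n/2$, $K\cong D_{n/2}$ and $n$ is even. In the cyclic case both reflections $\ga_1,\ga_2$ invert $K$, so $t:=\ga_1\ga_2$ centralizes $K$; thus $A:=\langle K,t\rangle$ is abelian of order $2n$ and $\ga_1$ inverts it, exhibiting $G$ as the generalized dihedral group of $A$. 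Classifying $A$ finishes this case: if $A\cong\ZZ/n\ZZ\times\ZZ/2\ZZ$ then $G\cong D_n\times\ZZ/2\ZZ$ and (DD) splits via the Klein-four complement $\langle\ga_1,c\rangle$, where $c\in A$ is the involution outside $K$; if $A\cong\ZZ/2n\ZZ$ then $G\cong D_{2n}$, and inspecting its involutions shows (DD) splits precisely when $n$ is odd, in which case $\ZZ/2n\ZZ\cong\ZZ/n\ZZ\times\ZZ/2\ZZ$ returns us to the split subcase, so for $n$ even this is exactly the non-split $D_{2n}$ outcome.

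The dihedral case $K\cong D_{n/2}$ is where the work concentrates, and I expect it to be the main obstacle. Writing $C=\langle\rho\rangle$ with $\rho$ of order $n/2$, the quotient $G/C$ has order $8$ and contains the two distinct Klein-four subgroups $H/C$ and $H'/C$, so $G/C\cong(\ZZ/2\ZZ)^3$ or $G/C\cong D_4$; the deciding datum is the automorphism of $H/C\cong(\ZZ/2\ZZ)^2$ induced by conjugation by $\ga_2$. In the abelian case one extracts a central involution and splits off a direct $\ZZ/2\ZZ$-factor to get $G\cong D_n\times\ZZ/2\ZZ$, with $H$ the factor $D_n\times\{0\}$ and $H'$ the graph of the homomorphism $\phi\colon D_n\to\ZZ/2\ZZ$ with kernel $K$; in the non-abelian case one writes down the presentation of $G=H\rtimes\langle\ga_2\rangle$ explicitly. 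The crux is then arithmetic: imposing that $\langle K,\ga_2\rangle\cong D_n$ and that the induced automorphism $x\mapsto x^{2h-1}$, $y\mapsto yx^2$ have order exactly two forces $n=4h$ with $h$ odd (the parity of $h$ is what makes $(2h-1)^2\equiv 1$ hold modulo $n$ while blocking spurious involutions in the non-$H$ coset), and a finite check of the involutions of $G$ then shows simultaneously that $K$ admits no Klein-four complement, so (DD) is non-split, and that $G$ has no element of order $2n$, so $G\not\cong D_{2n}$. Assembling the cyclic and dihedral analyses yields the stated trichotomy.
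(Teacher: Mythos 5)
Your construction of the involutions $\ga_1,\ga_2$ and your treatment of the case where $K$ is cyclic are correct and run essentially parallel to the paper's argument (the paper phrases the cyclic case through the relation $\ga_1\ga_2\ga_1\ga_2=x^r$ and the normalization $r\in\{0,1\}$ rather than through generalized dihedral groups, but the content is the same). The gap is in the case $K\cong D_{n/2}$, where you propose to decide the structure of $G$ by the isomorphism type of $G/C$, equivalently by the automorphism of $H/C\cong(\ZZ/2)^2$ induced by $\ga_2$. This invariant does not separate the two outcomes: in the paper's non-split example ($n=4h$ with $h$ odd, $\ga_2$ acting by $x\mapsto x^{2h-1}$, $y\mapsto yx^2$) one has $C=\langle x^2\rangle$, and $x^{2h-1}\equiv x$, $yx^2\equiv y\pmod{C}$, so $\bar\ga_2$ acts trivially on $H/C$ and $G/C\cong(\ZZ/2)^3$ --- exactly as in the split case $G\cong D_n\times\ZZ/2$ with dihedral kernel. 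Hence in your ``abelian case'' you cannot ``extract a central involution and split off a direct factor''; that step fails on this example. Relatedly, the arithmetic you lean on is off: $(2h-1)^2=4h(h-1)+1\equiv 1\pmod{4h}$ for \emph{every} $h$, so the order-two condition on the automorphism does not detect the parity of $h$.

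The distinction actually lives one level deeper than $G/C$. The paper first normalizes the choice of $\ga_1,\ga_2$ so that $\ga:=\ga_1\ga_2$ centralizes $K$ (using that suitable elements of $H\setminus K$ and of $H'\setminus K$ induce the same automorphism of $K\cong D_m$, $m=n/2$), and then examines $\ga^2$, which lies in the centre of $K$. If $\ga^2=1$, then $\ga$ is central, the sequence splits, and $G\cong D_n\times\ZZ/2$. If $\ga^2=x^m\neq 1$ (so $m=2h$), the question is whether $\ga_2$ can be corrected by a rotation so as to make $\ga$ an involution while keeping $\ga_2$ of order two: this succeeds precisely when $h$ is even (replace $\ga_2$ by $\ga_2x^h$, which yields a central involution and hence a splitting), and fails when $h$ is odd, which is where the exceptional semidirect product arises. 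Your write-up needs this finer analysis of $\ga^2\in Z(K)$; the quotient $G/C$ is too coarse to carry it.
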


\begin{rem}
\label{split}
The condition that (DD) splits always holds, except possibly if $H,H'$ are of different types III-a) and III-b).

Indeed if $H , H'\cong D_n$ are both of type II), then the Hurwitz generating systems of $C\to C/H$, $C\to C/H'$
must be of the form 
$$
v=(c_1,c_2;a,b) \, , \quad v'=(c_1',c_2',;a',b') \in HS(D_n;1,2) \, ,
$$
where $c_2$ is conjugate to $c_1$ (resp. $c'_2$ is conjugate to $c_1'$) in $G$
and $c_i^2=(c_i')^2=1$, $i=1,2$. From Theorem \ref{MSSV} it follows that $c_1,c_2,c_1',c_2' \not\in K=H\cap H'$,
therefore $c_1, c_2$ (resp. $c_1',c_2'$) are  reflections in $H$ (resp. $H'$).

Since  otherwise one of them
must be the unique central element of $H$ (resp. of $H'$); but the unique central element of $H$   is then central in $G$ ($H$ being normal in $G$), hence  it equals the unique central element of $H'$  by the same argument: hence it lies in $K$ and we derive a contradiction.

 From this we deduce that the numerical type of $v$ is the same as that of $v'$, up to 
automorphisms, and hence using Proposition \ref{general} it follows that $H$ and $H'$ have the same
unmarked topological type.

Finally, assume that $H, H' \cong D_n$ are both of type III-b). Let $v=(v_1,v_2,v_3,v_4)\in HS(G;0,4)$ be the Hurwitz generating system of $C\to C/G$.
By Corollary 7.2 we have:
$$
v_1^2 = v_2^2 = v_3^2 =1 \not= v_4^2 \, .
$$
Moreover, by Theorem \ref{MSSV},  $v_4\in H\cap H' =K$. Since  $ v_4^2 \neq 1$,  $v_4$ is a rotation in $H$ and in $H'$.
Now consider the Hurwitz systems of $C\to C/H$ and of $C\to C/H'$, which are of the following forms:
$$
u=(u_1, u_2, v_4, v_4') \, , \qquad u'=(u_1', u_2', v_4, v_4'') \, ,
$$
where $v_4'$ is conjugate to $v_4$ in $G$.
Notice that each of them contains only two reflections, hence by Remark 5.2 (R)
$v_4$ must generate the subgroup of rotations in $H$ (resp. in $H'$). But $v_4\in K$ and 
the subgroup of rotations of $K$ have index $2$ in the subgroup of rotations of $H$ (resp. of $H'$)
if $G$ is not the direct product of $D_n$ with $\mathbb{Z}/2$.

\end{rem}

\Proof{\em (of prop. \ref{DD}).}

$K$ is an index two subgroup in $D_n$. If $n$ is odd, then necessarily $ K = \sR$, the subgroup of rotations.

1) More generally, if $ K = \sR$, any element in $ H \setminus K$ has order two and splits the exact sequence
$   \ 1 \ra K \ra H \ra (\ZZ/2) \ra 1$, and the same holds for $H'$. Therefore we find elements $\ga_1, \ga_2$ as desired,
and such that their conjugation action on $K$ is the same, $ x^i \mapsto x^{-i}$.

Now, $\ga : = \ga_1 \ga_2$ centralizes $K = \sR$ and $\ga^2 \in K$. Let $x$ be  generator of $\sR$.
Hence
$$ \ga_1 \ga_2 \ga_1 \ga_2 = x^r .$$ 

Replacing $\ga_2 $ with $\ga_2 x^a$, we replace $r$ by $ r + 2a$. If $n$ is odd, then we may assume $r=0$,
whereas if $n$ is even, we also have the case $ r=1$. Observe that $r=0$ implies the splitting of the
above sequence (DD), moreover $\ga$ centralizes $K$.

 2) Assume that (DD) splits and $\ga = \ga_1 \ga_2$ centralizes $K$. 
 Then $$ \ga_1 \ga_2 \ga_1 \ga_2 = 1 \leftrightarrow \ga \ga_1 \ga = \ga_1 ,$$
 which amounts to $\ga$ being in the centre of $G$. Hence 
 $$ G \cong H \times \langle \ga \rangle \cong H \times  \ZZ/2.$$
 Moreover, $H' =  \langle K , \ga_2 \rangle =  \langle K , \ga_1 \ga \rangle$
 which proves our assertion.
 
 3) Assume now that $K = \sR$ and that (DD) does not split: then we can only achieve 
 $ \ga^2  = x .$ 
 Observe now that
 $$ \ga_1 \ga \ga_1 = \ga_2 \ga_1 = \ga^{-1} , \ga^{i} = 1 \leftrightarrow 2n | i .$$
 Since  $G$ is generated by $\ga, \ga_1$, it follows that $ G \cong D_{4m}$, $ n = 2m$. 
 
 4) Assume now that $n = 2m$ is even and $ K \neq \sR$. Then $ K \cong D_m$,
 and $K$ is generated, up to an automorphism, by $x^2$ and $y$. The element $z : = yx$
 has order two and its action by conjugation is
 $$  x^2 \mapsto x^{-2} , \ y \mapsto y x^2.$$ 
 
 In this way we construct also in this case the desired  elements $\ga_1, \ga_2$ such that 
 $\ga : = \ga_1 \ga_2$ centralizes $K$.
 
 Now, $ \ga^2$ is in the centre of $D_m$, hence  $ \ga^2 = 1$ if $m$ is odd,
 or, 
 if $ m = 2h$, then $ \ga^2 = x^{ 2 h} = x^m$.
 
 If $ \ga^2 = 1$ we have the splitting and we can apply 2).
 
 Otherwise, if $h$ is even, replace $\ga_2$ by $\ga_2 x^h$, which has again order equal to two.
 Then $\ga$ is replaced by $\xi : = \ga_1 \ga_2 x^h$. $\xi$ has order two,
 since
 $$ \xi^2 =   \ga_1 \ga_2 x^h  \ga_1 \ga_2 x^h =  \ga_1 \ga_2  \ga_1 \ga_2 x^{2h} =  \ga^2 x^m = x^{2m} = 1.$$
 
 Observe then that $  \ga_1 \ga_2 =  \ga_2 \ga_1 x^m$.
 
 We  show that then $\xi$ is in the centre:
 $$  \ga_1 \xi \ga_1 : = \ga_1  \ga_1 \ga_2 x^h \ga_1 = \ga_2 \ga_1 x^{-h} = \ga_1 \ga_2 x^{-m -h} = \ga_1 \ga_2 x^{h} = \xi,$$
  $$  \ga_2 \xi \ga_2 : = \ga_2  \ga_1 \ga_2 x^h \ga_2 = \ga_2 \ga_2 \ga_1 x^{m+h} \ga_2= \ga_1 x^{-h} \ga_2 = \ga_1 \ga_2 x^{h}
 = \xi.$$
 
 Assume instead that $h$ is odd. Then we observe simply that $G$ is generated by $H$ and by $\ga_2$, 
 and we set $x : = y \ga_1 \in H$, so that $H$ is generated in the standard way by $ y, x$.
 
 We have $\ga_2^2 = 1$, moreover conjugation by $\ga_2$ sends 
 $$  x^2 \mapsto x^{-2} , y \mapsto y x^{2}, \ga_1  \mapsto \ga_1 x^{2h} ,$$
 since $   \ga_1 \ga_2   \ga_1 \ga_2 = x^{2h} $.
 
 We conclude that $$x : = y \ga_1 \mapsto  y x^{2} \ga_1 x^{2h} = y x^{2} y x  x^{2h} = x^{2h-1}.$$

\qed 

\begin{prop}
\label{exclusion}
The case $\de=2$ cannot occur for the  group $D_n$.
\end{prop}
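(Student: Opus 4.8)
The plan is to assume $\delta=2$ and derive, via the \textbf{(MSSV)} theorem and the structural results already proved, a single concrete model for the ambient group, and then to exhibit an automorphism of that group which carries the $D_n$-action attached to $H$ into the one attached to $H'$. This forces $H$ and $H'$ to have the \emph{same} unmarked topological type, so they cannot realise the distinct types whose coincidence is the object of this appendix. First I would record the shape of the covering: assuming $\delta=2$, Theorem \ref{MSSV}(II) together with the Corollary gives that $C\to C/G$ is a covering of $\bP^1$ branched at five points, \emph{all} of branching index $2$, that $[G:H]=[G:H']=2$, and that $C/H$ and $C/H'$ are the two genus-one double covers obtained by discarding one of the five branch points. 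By the Remark following the statement of Proposition \ref{DD}, in this (type II) situation the extension $(DD)$ splits; hence Proposition \ref{DD} applies and yields $G\cong D_n\times\bZ/2$, with $H=D_n\times\{0\}$ and $H'=\{(a,\phi(a))\mid a\in D_n\}$ the graph of a non-trivial homomorphism $\phi\colon D_n\to\bZ/2$ with $\ker\phi=K$.

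Next I would translate the branch data into a generating vector. The five local monodromies $v_1,\dots,v_5$ are involutions of $G$ with $v_1\cdots v_5=1$ that generate $G$, and the genus-one condition on $C/H$ (resp. $C/H'$) says that exactly one $v_i$ lies in $H$ (resp. in $H'$). Reducing modulo $K$, so that $G/K\cong(\bZ/2)^2$ with its three order-two subgroups corresponding to $H,H',H''$, the images of the $v_i$ must generate $G/K$; this excludes the distribution in which all images fall into a single order-two subgroup and pins down the configuration (one image in $H/K$, one in $H'/K$, three in $H''/K$). As in the Remark, the unique $v_i\in H$ is a \emph{reflection} of $H\cong D_n$ and not its central involution: the central involution of $H$ would be central in $G$, hence equal to that of $H'$ and lying in $K$, a contradiction. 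The same holds for $H'$.

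The key step is the automorphism $\Theta\in\Aut(G)$ defined by $\Theta(a,s)=(a,s+\phi(a))$. Because $\phi$ is a homomorphism, $\Theta$ is a homomorphism; it is an involution with $\Theta|_K=\mathrm{id}$ and $\Theta(H)=H'$. Applying $\Theta$ to the generating vector interchanges the $H$- and $H'$-rôles of the monodromies, so it carries the $H$-subcover to a cover of the topological type of the $H'$-subcover. Since $\hat{\e}$ is $\Aut(G)$-invariant (Lemma \ref{aut-inv}) --- or, concretely, since the two restricted systems lie in $HS(D_n;1,2)$, have reflections as their two local monodromies, and therefore share a common $\nu$-type, so that Proposition \ref{general} identifies their $\approx$-classes --- the actions of $H$ and of $H'$ have the \emph{same} unmarked topological type. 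A coincidence $Z=Z'$ relevant to the classification requires $H$ and $H'$ to represent \emph{distinct} unmarked topological types; this is the desired contradiction, and $\delta=2$ cannot occur.

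The main obstacle is precisely this last identification: passing from the abstract automorphism $\Theta$, which only manipulates $G$ and its subgroups, to an equality of \emph{topological types}, i.e. of $Map_g$-orbits. This is where the $\Aut(G)$-equivariance of $\hat{\e}$ and the normal-form classification of Proposition \ref{general} must do the real work, and where one must check that the argument is uniform in the parity of $n$ --- that is, whether $K\cong\bZ/n$ or $K\cong D_{n/2}$ --- since this affects the internal structure of $H''$ but not the final conclusion.
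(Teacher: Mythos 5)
Your reduction to $G\cong D_n\times \ZZ/2$ with $H=D_n\times\{0\}$ and $H'$ the graph of $\phi$, and your count of how the five order-two local monodromies distribute over the three index-two subgroups of $G/K\cong(\ZZ/2)^2$, both agree with the paper. The automorphism $\Theta(a,s)=(a,s+\phi(a))$ does swap $H$ and $H'$, and it does show that \emph{if} the configuration existed then the two $D_n$-actions would have the same unmarked topological type. But that is not a contradiction with the hypotheses. The proposition asserts that two \emph{distinct subgroups} $H\neq H'$ of $Map_g$, both isomorphic to $D_n$, cannot satisfy $Fix(H)=Fix(H')$ with $\de=2$, and it is invoked in Theorem \ref{dimension} precisely to conclude $\de=1$. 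Two distinct subgroups of $Map_g$ can perfectly well be conjugate, hence share their unmarked topological type, and still be distinct; note that your $\Theta$ is an \emph{outer} automorphism of $G$ (inner automorphisms of $D_n\times\ZZ/2$ do not move the second coordinate), so $H$ and $H'$ are not even conjugate inside $G$, yet nothing identifies them as subgroups. The conclusion you reach is essentially the content of the Remark following Proposition \ref{DD} (where the paper shows that two type II subgroups would have the same unmarked type), not the content of this proposition; your final step quietly replaces the stated claim by a weaker one that does not support the subsequent dimension count.

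What is missing is the generation obstruction, which is the paper's actual argument. The three monodromies lying in $H''\setminus K=\sR\times\{1\}$ are involutions, so their first coordinates lie in $\{1,x^{n/2}\}$, forcing $n=2m$ even; the two remaining monodromies have first coordinates reflections $r_1,r_5$, and the product relation forces $r_1r_5\in\{1,x^{m}\}$. Hence the projections of the five elements to $D_n$ generate only $\langle y,x^{m}\rangle$ after normalising $r_1=y$, a group of order $4$, which is all of $D_n$ only for $n=2$. For $n\geq 3$ this contradicts the fact that the local monodromies of the connected cover $C\to C/G$ must generate $G$, and this outright impossibility --- not the coincidence of topological types --- is what rules out $\de=2$.
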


\Proof

We have already shown that in case II) 
the five branching indices must all be equal to $2$.

 By \ref{DD} and \ref{split}
the group $G$ generated by $H, H'$ must be $G = D_n \times  C_2$, where $C_2 : = \ZZ/2$,
$H$  is the subgroup $D_n \times  \{0\}$ , while $H' $ is the graph of  
 a homomorphism of $H$ onto $C_2$.

We consider first the case where four of the five elements have a
reflexion component. To have $(y,0)$ in the  group they generate, the second
component of one of these four elements must be trivial. 
We permute the other three into the first three positions, apply Lemma
2.1 of \cite{CLP11} to make the first two equal. 
So after a suitable automorphism of $D_n$ the tuple is
$$
(y,1)(y,1)(yx^\ell,1)(yx^{\ell+m},0)(x^m,1)
$$
where $m$ is an integer in $\{0,\frac n2\}$.

In case $n$ even and $m=n/2$ we may apply the following
automorphism of $G$, $(x,0)\mapsto (x,0)$, $(y,0)\mapsto (yx^m,0)$,
$(e,1)\mapsto (x^m,1)$.

So we are left with the case $m=0$, where we can apply another
automorphism to get
$$
(y,1)(y,1)(yx,1)(yx,0)(e,1).
$$
Since only one of these elements is in $H'$, and $ H' \neq H$, $H'$ is the
graph of the homomorphism which sends $y$ to $0$ and $x$ to $1$,
in particular $n$ must be even.

On the two intermediate covers, which are elliptic curves, we get the
respective Hurwitz vectors $(yx,yx; x,1)$ and $(yx',yx'; x',1)$, where
$x'$ is a shorthand for $(x,1)\in H'$. Obviously they are the same
under the automorphism of $G$ which keeps $(y,0)$ and $(y,1)$ fixed
and sends $(x,0)$ to $(x,1)$. This means that $H = H'$, a contradiction.

The analysis in the second case, where two of the five elements have a
reflection component, is similar. Up to automorphisms and braid
equivalence, there is only one possible tuple and $n$ must equal $2$:
$$
(x,1)(0,1), (x,1)(y,1)(y,0).
$$
Here $H'$ must be the
graph of the homomorphism which sends $y$ to $1$ and $x$ to $0$.
And on the two intermediate covers we get the
respective Hurwitz vectors $(y,y; x,x)$ and $(y',y'; x,x)$, 
where $y'$ is a shorthand for $(y,1)\in H'$. They are in one orbit
under the automorphism of $G$ which keeps $(x,0)$ and $(x,1)$ fix
and which exchanges $(y,0)$ with $(y,1)$.

\qed

\begin{theo}\label{dimension}
Assume that we have two distinct subgroups of $Map_g$, $H, H' \cong D_n$, and that  $Z : = Fix(H) = Fix(H')$.
Then $\de: = dim (Z) = 1$, and {  the numerical invariant $g'$ cannot be the same for both actions.}
\end{theo}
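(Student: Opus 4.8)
The plan is to derive both assertions from Theorem \ref{MSSV} and its Corollary, supplemented by Proposition \ref{DD}. First I would settle that $\de = 1$. Under our standing hypotheses ($Z = Z'$, $H \neq H'$, and $|H| = |H'|$ since $H \cong H' \cong D_n$) the Corollary to Theorem \ref{MSSV} already gives $\de \leq 2$, and its final clause shows that $\de = 0$ would force $Z$ and $Z'$ to be \emph{distinct} points, contradicting $Z = Z'$. Thus $\de \in \{1,2\}$, and the preceding Proposition (that $\de = 2$ cannot occur for $D_n$) eliminates $\de = 2$. Hence $\de = 1$, and both $H$ and $H'$ have index $2$ in the generic automorphism group $G = G_H$: the remaining alternative, type III-c) of index $4$, is excluded because $|H| = |H'|$ would then force $H'$ to be the unique genus-zero $(\ZZ/2)^2$-cover, i.e.\ $H = H'$.

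Next I would analyze the invariant $g'$, which for each action is the genus of the corresponding quotient $C/H$ resp.\ $C/H'$. With $\de = 1$ each of $H, H'$ is of type III-a) or III-b). They cannot both be III-a), since that case singles out the \emph{unique} genus-one double cover of $\PP^1$ branched at the four points $P_1,\dots,P_4$, which would again give $H = H'$. In the mixed case, say $H$ of type III-a) and $H'$ of type III-b) with common branching indices $2,2,2,2d_4$, the intermediate curve $C/H$ has genus one while $C/H'$ has genus zero; therefore $g'(H) = 1 \neq 0 = g'(H')$, which is exactly the asserted inequality and identifies the mixed configuration as the one exception announced at the start of this appendix.

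The delicate point, and the main obstacle, is to show that the remaining possibility, namely $H$ and $H'$ \emph{both} of type III-b), does not produce a genuine pair of distinct topological types (so that it cannot violate the conclusion). Here I would run the argument outlined in the Remark preceding the theorem: writing $v = (v_1,v_2,v_3,v_4)$ for the Hurwitz system of $C \to C/G$, one has $v_1^2=v_2^2=v_3^2=1 \neq v_4^2$ with $v_4 \in K = H \cap H'$ a rotation, while each of the genus-zero systems of $C \to C/H$ and $C \to C/H'$ carries exactly two reflection entries. The classification of $g'=0$ generating systems (condition (R) in the Remark of Section 5) then forces $v_4$ to generate the rotation subgroup $\sR$ of both $H$ and $H'$; since $v_4 \in K$ this yields $\sR(K) = \sR(H) = \sR(H')$, whence by Proposition \ref{DD} the extension (DD) splits, $G \cong D_n \times \ZZ/2$ with $K = \sR$. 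In this split situation the automorphism $(d,\eps) \mapsto (d,\,\eps + \mathrm{sign}(d))$ of $G$ interchanges $H = D_n \times \{0\}$ with the graph $H'$ of the sign homomorphism, so $H$ and $H'$ carry the \emph{same} unmarked topological type (and the same $\nu$-type); they therefore do not constitute two distinct actions to be compared. The hardest part of this step is making the collapse precise: checking via Riemann--Hurwitz that all three intermediate index-two covers $C/H_i \to \PP^1$ are rational, and verifying that the two dihedral ones are $\mathrm{Aut}(G)$-conjugate, so that the only configuration giving two genuinely distinct topological types with $Z = Z'$ is the mixed III-a)/III-b) case, where $g'$ indeed differs.
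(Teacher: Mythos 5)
Your reduction to $\de = 1$ (via the Corollary to Theorem \ref{MSSV} and the proposition excluding $\de=2$) and your treatment of the mixed III-a)/III-b) case agree with the paper and correctly locate where $g'$ differs. The problem lies in the case where $H$ and $H'$ are both of type III-b). A first, smaller issue: the conclusion $K = \sR(H) = \sR(H')$ does \emph{not}, by Proposition \ref{DD}, force the sequence (DD) to split --- that proposition explicitly allows the non-split alternative $G \cong D_{2n}$ precisely when $K = \sR$ and $n$ is even, so quoting it to get $G \cong D_n \times \ZZ/2$ is a non sequitur; some additional input (the paper appeals to the Remark following Proposition \ref{DD} together with the explicit branching data) is needed to rule that alternative out.

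The more serious gap is your final step. From an abstract automorphism $\phi \in Aut(G)$ with $\phi(H) = H'$ you infer that the $H$- and $H'$-actions on $C$ have the same unmarked topological type. That inference is not valid: both subgroups act on the \emph{same} curve $C$, and to conjugate one action into the other you need a homeomorphism $f$ of $C$ with $fHf^{-1} = H'$, i.e.\ you need $\phi$ (modulo $Inn(G)$, which is useless here since $H$ is normal in $G$) to lie in the image of $N_{Map(C)}(G) \to Out(G)$. This holds if and only if $\phi$ carries the Hurwitz vector of $C \to C/G$ into the same mapping-class-group orbit --- exactly the verification you have skipped, and one can check that for $n$ even your $\phi$ does change the Nielsen class, so the matter is not automatic. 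The paper does not argue via $Aut(G)$ at all: it pins down the $4$-tuple of $C \to C/G$ as $(y,1)(x^{h},1)(yx^{h-1},0)(x,0)$ with $h \in \{0, n/2\}$, computes from it the two length-$4$ Hurwitz vectors of the intermediate covers $C \to C/H$ and $C \to C/H'$, namely $(yx^{h-1},x,yx^{h+1},x^{-1})$ and $(x,y,x,y)$, checks that their Nielsen functions lie in a single $Aut(D_n)$-orbit, and only then invokes Theorem 2 of \cite{CLP11} to conclude that the two unmarked topological types coincide. Without that explicit monodromy comparison (or a proof that your $\phi$ is induced by a homeomorphism of $C$), the both-III-b) case is not closed.
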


\Proof
We have shown in cor. 7.2 that $ \de \leq 2, \de \neq 0$, and in proposition 7.6 that $\de \neq 2$. Hence $\de=1$.

By remark 7.3, if we make the assumption  
that $g'$ is the same, follows then that $H$ and $H'$ are both of type III-b). By prop. 7.4 and remark 7.5, the group is then  $G = D_n \times  C_2$,

$H$ is $D_n \times \{0\}$ and $H'$ is the graph of some homomorphism from $D_n$ to $C_2$.
By the last paragraph of Remark 7.5, $K$ coincides with the group of rotations of $H=D_n\times \{0\}$, 
hence $H'=\ker(f)$, where $f: G=D_n \times C_2 \to C_2$ is given by 
$f(y^bx^i, a)= a+ b$.

By remark 7.3, the branching indices of $C\to C/G$ are $2,2,2,c_4$, therefore up to a permutation we get

the sequences
$$ 
a = 1,1,0,0, \ 
a+b = 0,1,1,0 , \ 
b= 1,0,1,0.
$$
 
Hence we get that the first coordinates of the second and of the  last element are  rotations of respective orders $d_2$, $c_4$, where $d_2 = 1$ or $d_2 = 2$.

There is  surjection $ G \ra D_n$ with the second element in the kernel. Now, if $D_n$ is generated by two reflections $\sigma_1, \sigma_2$ then the rotation $\sigma_1 \sigma_2$ 
generates the group $\sR$ of rotations. We conclude that the order $c_4 = n$.

We may assume, up to an automorphism of $D_n$,  that the last element is $(x,0)$, and the first one is $(y,1)$.

Then the 4-tuple is:
\begin{equation*}
(y,1) (x^{h},1)(yx^{h-1},0)(x,0),
\end{equation*}
where $ h =0$ or  $ h = n/2$.

The respective subgroups are generated by $\ga_3$, $\ga_4$, $\ga_1 \ga_3 \ga_1= : \ga_3'$,  $\ga_1 \ga_4 \ga_1= : \ga_4'$
in one case,
by  $\ga_4$, $\ga_1^* : = \ga_1^{-1}$, $\ga_2 \ga_4 \ga_2$,  $\ga_2 \ga_1^* \ga_2$ in the other.

We get the two corresponding length 4 Hurwitz vectors.
$$
(yx^{h-1}, x, yx^{h + 1}, x^{-1}) , \ 
(x, y, x, y).
$$
The corresponding two Nielsen functions are { equal for $n$ odd ($h=0$), distinct for $n$ even,
but in the same orbit under the group $ \Aut (D_n)$.

By \cite{CLP11}, Theorem 2, $H, H'$ correspond then to the same unmarked topological type.  }

\qed

\begin{theo}\label{exception}
Assume that we have two distinct subgroups of $Map_g$, $H, H' \cong D_n$, and that  $Z : = Fix(H) = Fix(H')$.
Then $\de: = dim (Z) = 1$, and case III-a) holds for $H$,  case III-b) holds for $H'$.

This case actually occurs.
\end{theo}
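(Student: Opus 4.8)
The plan is first to isolate, among the configurations permitted by the earlier results, the only one that can give two genuinely different topological types, and then to realize it by an explicit cover. By Theorem~\ref{dimension} we have $\de=1$, and by Theorem~\ref{MSSV} together with the preceding remarks the only options for $H\neq H'$ with $Z=Z'$ and $H\cong H'\cong D_n$ are: both of type III-b), or, after possibly exchanging the two groups, $H$ of type III-a) and $H'$ of type III-b). In the first case both $C/H\to C/G$ and $C/H'\to C/G$ have genus zero, both actions have $g'=0$, and the proof of Theorem~\ref{dimension} shows their Nielsen functions lie in one $\Aut(D_n)$-orbit, so by \cite{CLP11} the unmarked topological types agree and there is no exception. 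In the III-a)/III-b) case $C/H$ has genus $g'=1$ and $C/H'$ has genus $g'=0$; the actions then differ in primary numerical type (hence in $\hat\nu$ and in topological type) while sharing $Z$, so this is the sole exception.

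For existence I would construct one $G$-cover of $\bP^1$ branched over four points with indices $2,2,2,2d_4$ that is of type III-a) through $H$ and of type III-b) through $H'$. Letting $\chi_H,\chi_{H'}\colon G\to\ZZ/2\ZZ$ be the characters cutting out $H$ and $H'$, this asks for local monodromies $g_1,\dots,g_4$ with $\chi_H(g_i)=1$ for all $i$ (all four points ramify in $C/H$, giving genus one) and $\chi_{H'}(g_i)=1$ for exactly the two order-two monodromies that will be the branch points of $C/H'$ (giving genus zero over two index-two points). Then $g_4$, of order $2d_4>2$, lies in $H'\setminus H$ and is a rotation of $H'$; if $K=H\cap H'$ were the cyclic rotation subgroup of $H'$ it would contain $g_4$ and force $g_4\in H$, a contradiction, so $K$ must be the dihedral index-two subgroup $D_{n/2}$, and in particular $n$ is even.

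The smallest case already works. Take $n\geq4$ even, $G=D_n\times\ZZ/2\ZZ$ with $D_n=\langle x,y\rangle$ and $\ZZ/2\ZZ=\langle t\rangle$, put $H=D_n\times\{1\}$ and let $H'$ be the graph of $\phi\colon D_n\to\ZZ/2\ZZ$ with $\phi(x)=1,\phi(y)=0$ (well defined since $n$ is even), so that $H\cong H'\cong D_n$, $H\neq H'$ and $K=\langle x^2,y\rangle\cong D_{n/2}$. I claim that
$$
v=(\,t,\ yt,\ x^{-1}yt,\ x^{-1}t\,)
$$
is a Hurwitz generating system in $HS(G;0,4)$: one checks that $ev(v)=1$, that $v$ generates $G$, that the local orders are $2,2,2,n$, and that $\chi_H\equiv1$ on all entries while $\chi_{H'}=1$ exactly on $t$ and $yt$. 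Thus all four points ramify in $C/H$ (type III-a)) and only the two index-two points ramify in $C/H'$ (type III-b)); by Theorem~\ref{MSSV}, Case III, $\Fix(H)=\Fix(H')=Z$ is one-dimensional, as required.

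The crux is this existence step: a single factorization must be read off as two different types through the two characters, which is exactly the compatibility imposed by $\chi_H\equiv1$, by $\chi_{H'}$ vanishing on $g_4$ but not on the two chosen order-two entries, and by the relation $\prod_i g_i=1$. It is this interplay, together with the parity of the reflection entries, that forces $K\cong D_{n/2}$ with $n$ even and so constrains the admissible groups $G$ (Proposition~\ref{DD}); verifying all the conditions on the explicit $v$ is the main computational point, after which the differing genera $g'=1$ and $g'=0$ make the two topological types manifestly distinct.
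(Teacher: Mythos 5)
Your proposal is correct and follows essentially the same route as the paper: the reduction to the III-a)/III-b) configuration via Theorem \ref{dimension} and the preceding corollaries, and then an explicit realization with $G=D_n\times\ZZ/2\ZZ$, the two index-two subgroups $D_n\times\{1\}$ and the graph of $\phi$, and a length-four Hurwitz vector of local orders $2,2,2,n$ whose entries all lie outside $H$ but meet $H'$ exactly in the two entries of order $>2$ together with one reflection class. The paper's example $\bigl((e,1),(yx,1),(y,0),(x,0)\bigr)$ differs from your $(t,\,yt,\,x^{-1}yt,\,x^{-1}t)$ only in which of the two subgroups is taken to be the direct factor, so the two constructions are the same up to relabelling.
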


\Proof
That the only possible case is the one described follows by theorem \ref{dimension} and the previous discussion.
 It suffices thus
to give a concrete example.

We have  a polygonal group $T(2,2,2,2d)$, generated by elements
$\ga_1$, $\ga_2$, $\ga_3$, $\ga_4$ 
whose orders are respectively $2,2,2,2d$ and
whose product $\ga_1 \ga_2 \ga_3 \ga_4 $ is the identity.

The elliptic induced covering is generated by 
$$
a : = \ga_1 \ga_2,\quad
b : = \ga_2 \ga_3,\quad  
c : = (\ga_1 \ga_4 \ga_1 )^2
$$
which satisfy the usual presentation $[a,b] = c $.

Whereas for the genus zero induced covering, since we have a double cover branched on the first two points, we get generators
$$
\ga_3,\quad 
\ga_4,\quad 
\ga'_3 : = \ga_1 \ga_3 \ga_1,\quad 
\ga '_4 : = \ga_1 \ga_4 \ga_1,
$$
which satisfy the usual presentation 
$$
\ga_3 \ga_4  \ga'_3  \ga '_4= 1.
$$

These elements have respective orders $2, 2d, 2, 2d$, so we guess that $\ga_4$ should map to  a generator of the rotation group, say $\ga_4 
\mapsto  x$, and $\ga_3 $
should map to  a reflection, say $\ga_3 \mapsto y$.

We make the following assumption: since $\ga_1$  normalizes $H'$, let us just assume that  $\ga_1$  centralizes $H'$.

Then  we have a direct product
$G = H' \times C_2 \cong D_n \times  C_2 $, where  $n = 2d $, and the cyclic group  $C_2$  of order $2$is generated by $\ga_1$.

We take as Hurwitz vector  for $G$  (images of $\ga_1, \ga_2, \ga_3, \ga_4 $)
$$
(0,1) (yx, 1) (y,0) (x,0).
$$ 
Clearly these four elements  generate then $G$.

Now  $\ga_3 , \ga_4 ,  \ga'_3 , \ga '_4$
are respectively sent to   $(y,0)(x,0) (y,0)(x,0)$, and they generate the group $H' \cong D_n$.

 On the other hand  
$$
a \mapsto   (yx, 0),\quad 
b \mapsto   (x^{-1}, 1),\quad 
c \mapsto   (x^2, 0)
$$
and clearly $ [a,b]   \mapsto   (x^2, 0)$.

Moreover, the image group $H$ projects, under the first coordinate, onto the dihedral group $ D_n.$

\qed

{\bf Acknowledgement:}
\\

The authors would like to thank Binru Li and Sascha Weigl for
providing the argument which helped us to fix  a gap in a previous proof of 
Prop.\ref{exclusion}.

\bigskip
\noindent {\bf Authors' Address:}\\
\noindent Fabrizio Catanese, Michael L\"onne\\ Lehrstuhl Mathematik VIII,\\ 
Mathematisches
Institut der Universit\"at
Bayreuth\\ NW II,  Universit\"atsstr. 30\\ 95447 Bayreuth\\

Fabio Perroni, \\
SISSA - International School for Advanced Studies, School of Mathematics\\
 via Bonomea, 265 - 34136 Trieste (Italy).\\
 
          email:       
          
             fabrizio.catanese@uni-bayreuth.de,
          
michael.loenne@uni-bayreuth.de,
 
perroni@sissa.it.


\begin{thebibliography}{Grif-SchmX}

\bibitem[Artin]{Artin} Artin, E.,  Geometric algebra, Interscience Publishers, Inc., New York-London, x+214 (1957).

\bibitem[BaCa97]{BC}
Bauer, I.; Catanese, F.  { \it Generic lemniscates of algebraic functions. }
  Math. Ann.  307  ,  no. 3, 417--444 (1997).

\bibitem[BF86]{bf} Biggers, R.; Fried, M.  { \it Irreducibility of
moduli spaces of cyclic unramified covers of
genus g curves.} Trans. Am. Math. Soc. 295, 59-70 (1986).


\bibitem[Bir69]{Bir69} Birman, Joan S. {\it  Mapping class groups and their relationship to braid groups},

 Comm. Pure Appl. Math. 22 (1969), 213--238.
 
 
\bibitem[Bro]{Brown} Brown, K. S., {Cohomology of groups},
{Graduate Texts in Mathematics},
  {\bf 87},  {Springer-Verlag},  {New York}, (1982).


\bibitem[Cat88]{cime88} Catanese, F.  {\it Moduli of algebraic
surfaces.}  Theory of moduli (Montecatini
Terme, 1985),  1--83, Lecture Notes in Math., 1337, Springer, Berlin, (1988).

\bibitem[Cat00]{FabIso} Catanese, F., {\it Fibred Surfaces, varieties
isogenous to a product and related
moduli spaces}. Amer. J. Math.  122 (2000), no. 1, 1--44.

\bibitem[Cat08]{cime} Catanese, F. {\it Differentiable and
deformation type of algebraic  surfaces, real and
symplectic structures.}
           Symplectic 4-manifolds and algebraic surfaces,  55--167,
Lecture Notes in Math., 1938, Springer, Berlin,
(2008).

\bibitem[Cat10]{cyclic} Catanese, F. {\it 
Irreducibility of the space of cyclic covers of 
algebraic
  curves of fixed numerical type and the 
irreducible components of $Sing 
(\overline{\mathfrak M_g})$.}  Proceedings volume of "The Conference on Geometry" 
honoring Shing-Tung Yau's 60th birthday. "Advanced Lectures in Mathematics" series of International Press, in cooperation with the Higher Education Press and the 
Stefan Banach International Mathematical Centre (Institute of Mathematics Polish Academy of Sciences Publishing House), to appear. 
      	arXiv:1011.0316.

\bibitem[CLP11]{CLP11} Catanese, F., L\"onne, M., Perroni, F. {\it 
Irreducibility of the space of dihedral covers of 
algebraic  curves of fixed numerical type.} 
Atti Accad. Naz. Lincei Cl. Sci. Fis. Mat. Natur. Rend. Lincei (9) Mat. Appl. 22 (2011), 1--19.  arXiv:1102.0490 .

 \bibitem[CLP13]{CLP13}
Catanese, F., L\"onne, M., Perroni, F.,
\textit{Genus stabilization for moduli of curves with symmetries.}  arXiv:1301.4409


\bibitem[Cleb72]{Clebsch}
Clebsch A., {\it Zur Theorie der Riemann'schen 
Fl\"achen.} Math. Ann. 6, 216-230 (1872).




\bibitem[Com30]{comessatti} Comessatti, A. {\it Sulle superficie
multiple cicliche. }
    Rendiconti Seminario Padova 1, 1-45 (1930)

\bibitem[Cor87]{cor1} Cornalba, M.  {\it On the locus of curves with
automorphisms. } Ann. Mat. Pura
Appl., IV. Ser. 149, 135-151 (1987).

\bibitem[Cor08]{cor2} Cornalba, M.  { \it Erratum: On the locus of
curves with automorphisms. } Ann. Mat.
Pura Appl. (4) 187, No. 1, 185-186 (2008).

\bibitem[DM]{DM} Deligne, P., Mumford, D., 
   {\it The irreducibility of the space of curves of given genus},
  Inst. Hautes \'Etudes Sci. Publ. Math., no. 36, 75--109 (1969).

\bibitem[Du-Th06]{DT}
Dunfield, N. M.; Thurston, W. P.
{\it Finite covers of random 3-manifolds.}
Invent. Math. 166 , no. 3, 457--521 (2006).

\bibitem[Edm I]{Edm I} Edmonds, A. L., {\it Surface symmetry. {I}}, {Michigan Math. J.}, {\bf 29}  (1982), {n. 2}, {171--183}.
\bibitem[Edm II]{Edm II} Edmonds, A. L., {\it Surface symmetry. {II}}, {Michigan Math. J.}, {\bf 30}  (1983), {n. 2}, {143--154}.

\bibitem[FM12]{fm}
B.~Farb, D.~Margalit,
{\it A primer on mapping class groups}, volume~49 of {\em Princeton
  Mathematical Series},
Princeton University Press, Princeton, NJ, 2012.



\bibitem[FV91]{FV}
Fried, M. D.; V{\"o}lklein, H.,  {\it The inverse {G}alois problem and rational points on moduli spaces},
  {Math. Ann.},  {\bf 290},  {1991},  {n. 4},  {771--800}.

\bibitem[Ful69]{Fulton}
W. Fulton: { \it Hurwitz schemes and 
irreducibility of moduli of algebraic curves.}
  Ann. of Math. (2) 90 ,542-575 (1969) .

\bibitem[GHS02]{GHS}
Graber, T., Harris, J., Starr, J.: { \it A note 
on Hurwitz schemes of covers of a positive genus 
curve.}
  arXiv: math. AG/0205056.
  
  \bibitem[Hopf]{Hopf}
Hopf, H.: {\it Fundamentalgruppe und zweite {B}ettische {G}ruppe}.
 {Comment. Math. Helv.}
 {14} {(1942)}, {257--309}.

\bibitem[Hur91]{Hurwitz}
Hurwitz, A.:{ \it Ueber Riemann'schen Fl\"achen 
mit gegebenen Verzweigungspunkten.} Math. Ann. 
39, 1--61 (1891).

\bibitem[Kanev06]
{Ka1} Kanev, V. \textit{Hurwitz spaces of {G}alois coverings of {${\Bbb P}^1$,
   whose {G}alois groups are {W}eyl groups}},
    J. Algebra 305 (2006) 442--456.

\bibitem[Kanev05]
{Ka2} Kanev, V. \textit{Irreducibility of Hurwitz 
spaces. }  arXiv: math. AG/0509154.

\bibitem[Kluit88]{Kluitmann}
Kluitmann, P.: { \it Hurwitz action and finite 
quotients of braid groups.} In: Braids (Santa 
Cruz, CA 1986).
Contemporary Mathematics, vol. 78, pp. 299-325. AMS, Providence (1988).

\bibitem[Liv85]{Liv}
Livingston, C.: {\it Stabilizing surface symmetries}. Mich. Math. J. 32, 249--255 (1985).

\bibitem[MSSV01]{mssv}
Magaard, K., Shaska, T., Shpectorov, S.,  V\"olklein, H. ,
{\it The locus of curves with prescribed automorphism group.} Communications in arithmetic fundamental groups (Kyoto, 1999/2001). S$\bar{u}$rikaisekikenky$\bar{u}$sho K$\bar{o}$ky$\bar{u}$roku No. 1267 (2002), 112--141.

\bibitem[Par91]{pardini} Pardini, R. {\it Abelian covers of algebraic
varieties.}  J. Reine  Angew. Math.  417
(1991), 191--213.

\bibitem[Ries93]{ries}
Ries, John F. X. , {\it Subvarieties of moduli space determined by finite groups acting on surfaces.}
 Trans. Amer. Math. Soc. 335 (1993), no. 1, 385--406.

\bibitem[Sia09]{Sia} Sia, C. {\it  Hurwitz equivalence in tuples of dihedral groups, dicyclic groups, and semidihedral groups. }
Electron. J. Combin. 16 , no. 1, Research Paper 95, 17 pp (2009). 

\bibitem[Sing72]{singer}
D. Singerman, {\it Finitely maximal Fuchsian groups}, J. London Math. Soc. 6 (1972), 29--38.

  \bibitem[Ve06]{Vetro1}
Vetro, F. :   {\it Irreducibility of Hurwitz 
spaces of coverings with one special fiber. }
Indag. Math. (N.S.)  17  ,  no. 1, 115-127 (2006).

\bibitem[Ve07]{Vetro2}
Vetro, F. :   {\it Irreducibility of Hurwitz 
spaces of coverings with monodromy groups Weyl 
groups of type $W(B\sb d)$.}
  Boll. Unione Mat. Ital. Sez. B Artic. Ric. Mat. 
(8)  10  ,  no. 2, 405-431 (2007).

\bibitem[Ve08]{Vetro3}
Vetro, F. :   {\it Irreducibility of Hurwitz 
spaces of coverings with one special fiber and 
monodromy group
a Weyl group of type $D\sb d$. } Manuscripta 
Math.  125  ,  no. 3, 353-368 (2008).


\bibitem[Waj96]{Bronek1}
Wajnryb, B.  {\it  Orbits of Hurwitz action for 
coverings of a sphere with two special fibers.} 
Indag. Math. (N.S.)  7,  no. 4,
549--558 (1996).

\bibitem[Waj99]{Bronek2}
Wajnryb, B.  {\it  An elementary approach to the 
mapping class group of a surface.}  Geom. Topol. 
3  , 405--466 (1999).

\bibitem[Wie]{Wiegold}  Wiegold, J., {\it The {S}chur multiplier: an elementary approach},
{Groups---{S}t. {A}ndrews 1981 ({S}t. {A}ndrews, 1981)},  {London Math. Soc. Lecture Note Ser.},
 {\bf 71},  {137--154}, {Cambridge Univ. Press}, (1982).


\end{thebibliography}
\end{document}